\newcommand{\Vspace}[1]{}
\newif\ifSODA
\newtheorem{claimx}{Claim}{\bfseries}{\itshape}
\newcommand{\Fary}{F\'ary}
\newcommand{\xxx}{\includegraphics[width=.9ex]{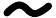}}
\newcommand{\yyy}{\includegraphics[width=.9ex]{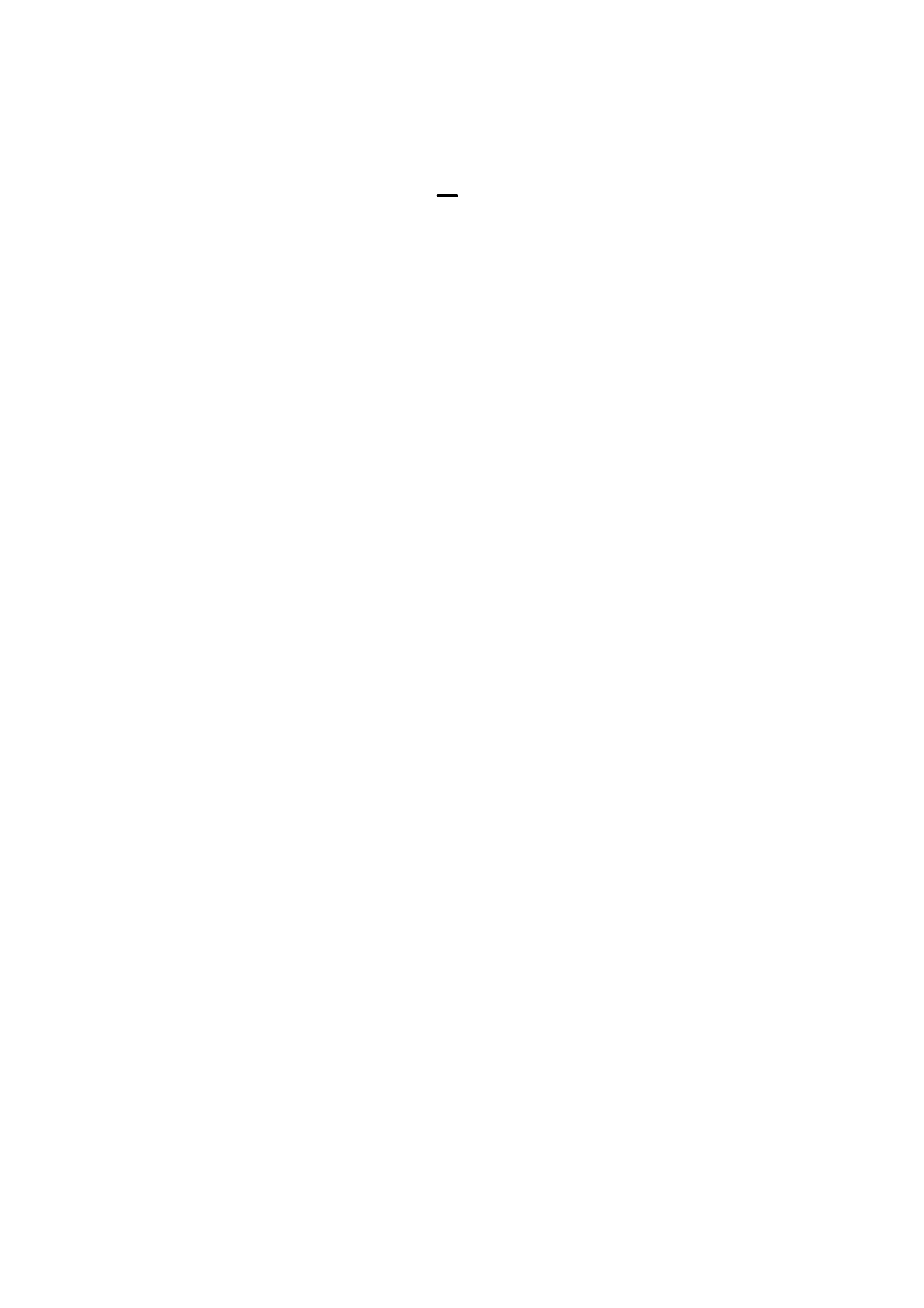}}
\newcommand{\straight}[1]{\stackrel{\yyy}{#1}}
\newcommand{\squiggle}[1]{\stackrel{\xxx}{#1}}
\DeclareMathOperator{\tv}{\squiggle{\mathit{v}}}
\DeclareMathOperator{\sv}{\straight{\mathit{v}}}
\title{\MakeUppercase{Every Collinear Set in a Planar Graph Is Free}\thanks{%
    The work of VD and PM was partly funded by NSERC.
    The work of FF was 
partially supported by MIUR Project “MODE” under PRIN 20157EFM5C and by 
H2020-MSCA-RISE project 734922, “CONNECT”.
    The work of DG  was 
partly funded by the ANR project GATO, under contract
     ANR-16-CE40-0009.}}
\author{Vida Dujmovi\'c,\thanks{Department of Computer Science and Electrical Engineering, University of Ottawa}\quad
        Fabrizio Frati,\thanks{Dipartimento di Ingegneria, Universit\'a Roma Tre}\quad 
        Daniel Gon\c{c}alves,\thanks{LIRMM, Université de Montpellier, CNRS}\quad
        Pat Morin,\thanks{School of Computer Science, Carleton University}\quad 
        and G\"unter Rote\thanks{Institut f\"ur Informatik, Freie Universit\"at Berlin}}
\begin{document}
\begin{titlepage}
\maketitle

\begin{abstract}
  We show that if a planar graph $G$ has a plane straight-line drawing in
  which a subset $S$ of its vertices are collinear, then for any set of
  points, $X$, in the plane with $|X|=|S|$, there is a plane straight-line
  drawing of $G$ in which the vertices in $S$ are mapped to the points
  in $X$.  This solves an open problem posed by Ravsky and Verbitsky in
  2008.  In their terminology, we show that every collinear set is free.

  This result has applications in graph drawing, including untangling,
  column planarity, universal point subsets, and partial simultaneous
  drawings.
\end{abstract}
\end{titlepage}

\tableofcontents

\newpage
\pagenumbering{arabic}


\section{Introduction}


A \emph{straight-line drawing} of a graph $G$ maps each vertex to a point in the plane and each edge to a line segment between its endpoints. A straight-line
drawing is \emph{plane} if no pair of edges cross except at a
common endpoint. A set
of vertices  $S\subseteq V(G)$ in a planar graph $G$ is a \emph{free
  set} if for any set of points $X$ in the plane with $|X|=|S|$, $G$ has a plane
straight-line drawing in which the vertices of $S$ are mapped to the points in $X$.  Free sets are useful tools in graph drawing
and related areas and have been used to settle problems in
untangling~\cite{bose.dujmovic.ea:polynomial,dalozzo.dujmovic.ea:drawing,dujmovic:utility,ravsky.verbitsky:on
}, column planarity~\cite{dalozzo.dujmovic.ea:drawing,dujmovic:utility}, universal point subsets~\cite{dalozzo.dujmovic.ea:drawing,dujmovic:utility},
and partial simultaneous geometric drawings~\cite{dujmovic:utility}.

\begin{figure*}[htb]
  \centering
  \ifSODA
  \includegraphics[scale=0.8]{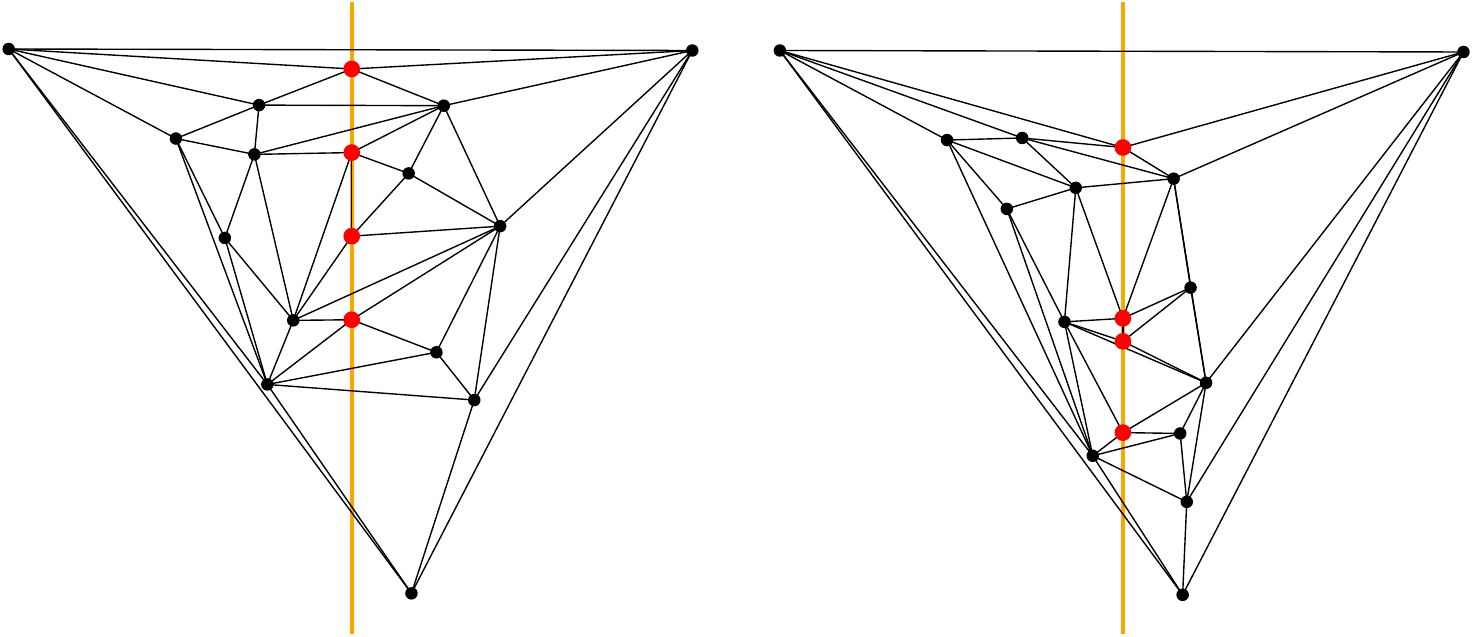}
  \else
  \includegraphics{figs/collinear-example}
  \fi
  \caption{The 4 red vertices form a collinear set $S$. On the
    right, the graph is redrawn so that vertices of $S$ lie at some
    other collinear locations.}
  \label{fig:collinear}
\end{figure*}

 A set of vertices  $S\subseteq V(G)$ in a planar graph $G$ is a
 \emph{collinear set} if $G$ has a plane straight-line drawing in
 which all vertices in $S$ are mapped to a single line,
see \figref{collinear}.
 A collinear set $S$
is a \emph{free collinear set} if, for any collinear set of points in
the plane $X$ with $|X|=|S|$, $G$ has a plane straight-line drawing in
which the vertices of $S$ are mapped to the points in $X$.  
Ravsky and Verbitsky~\cite{ravsky.verbitsky:on
}
define $\sv(G)$ and $\tv(G)$ as the respective sizes of the
largest collinear set and largest free collinear set in $G$, and ask
the following question:
\begin{quote}
	How far or close are parameters $\tv(G)$ and $\sv(G)$? It
	seems that \emph{a priori} we even cannot exclude equality. To clarify
	this question, it would be helpful to (dis)prove that every collinear
	set in any straight-line drawing is free.
\end{quote}
Here, we answer this question by proving that, for every planar graph $G$,
$\tv(G)=\sv(G)$, that is:

\begin{thm}\thmlabel{our-bang}
Every collinear set is a free collinear set. 
\end{thm}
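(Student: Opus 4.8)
The plan is to use the given collinear drawing to reduce the assertion to a one‑dimensional placement problem that can be attacked by induction. Since affine maps take plane straight‑line drawings to plane straight‑line drawings, we may assume the target set $X$ lies on the $x$-axis. We may also assume $G$ is a triangulation: starting from the hypothesised plane straight‑line drawing of $G$ with $S$ on a line $\ell$, insert dummy vertices (placed inside faces, off $\ell$) and edges until the drawing becomes a plane straight‑line drawing of a triangulation $T\supseteq G$ with $S$ still on $\ell$; any plane straight‑line drawing of $T$ sending $S$ to $X$ restricts, after deleting the inserted vertices and edges, to the drawing of $G$ we want. So from now on $G=T$ is a triangulation with a plane straight‑line drawing $\Gamma$ in which $S=\{s_1,\dots,s_k\}$ appears in this left‑to‑right order on the $x$-axis, and we must redraw $T$ with $s_i$ at the prescribed point $(x_i,0)$, where $x_1<\dots<x_k$.

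Cut $\Gamma$ along the $x$-axis: split each $s_i$ into an upper and a lower copy, and put a new vertex at every point where an edge of $T$ meets the $x$-axis transversally. This breaks $T$ into an upper plane graph $T^{+}$ and a lower plane graph $T^{-}$, each drawn in a closed half‑plane, whose vertices on the $x$-axis form the same left‑to‑right sequence $Z$ (the copies of the $s_i$ interleaved with the new crossing vertices). Adding edges along the $x$-axis so that $Z$ becomes a path, and then triangulating the interior of each half, reduces the problem to the following one‑sided lemma: \emph{if a plane graph $H$ has a plane straight‑line drawing in a closed half‑plane with a prescribed path $P=(z_1,\dots,z_m)$ on the bounding line and all other vertices strictly inside, then for every $\zeta_1<\dots<\zeta_m$ there is such a drawing with $z_j$ placed at $(\zeta_j,0)$.} Apply the lemma to $T^{+}$ and to $T^{-}$ with a single common coordinate vector $\zeta$, the copies of $s_i$ receiving the coordinate $x_i$; since the two drawings agree on the $x$-axis and occupy opposite closed half‑planes, their union is a plane straight‑line drawing of (a subdivision of) $T$ in which each former crossing edge is a two‑segment path bending once on the $x$-axis. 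Choosing the interior triangulations so that each crossing vertex lies in the two triangles formed by its edge to its upper (respectively lower) neighbour together with the two axis edges incident to it, the segment that reconnects the two halves of such an edge stays inside those triangles, hence meets nothing else, once the crossing vertices are given coordinates consistent with these two half‑drawings — which the one‑sided construction can be set up to allow. Deleting the scaffolding edges and smoothing out the subdivision vertices then yields the desired drawing of $T$, and hence of $G$.

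The heart of the argument is the one‑sided lemma. I would prove it by induction on $|V(H)|$, deleting the non‑$P$ vertices of $H$ one at a time in a shelling order that leaves $P$ for last — at each step removing a vertex $v\notin P$ that lies on the current outer boundary and whose remaining neighbours form a consecutive arc of that boundary — drawing what is left by the induction hypothesis with $P$ at its prescribed abscissae, and then reinserting $v$ far above the current drawing, joined by straight segments to that arc, an operation that moves no vertex of $P$. Making this go through requires an induction invariant recording not only planarity and the correctness of the already‑placed $P$-coordinates, but also a convexity‑ or visibility‑type property of the upper boundary that guarantees the reinsertion step is legal; finding an invariant robust enough to survive the induction, together with a proof that the needed shelling order exists, is where I expect the main difficulty to lie. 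This is also the step in which the hypothesis is genuinely used: for arbitrary plane graphs and arbitrary prescribed $x$-coordinates no plane straight‑line drawing need exist, so the induction must exploit that $P$ is realisable on a line in the drawing inherited from $\Gamma$. With the one‑sided lemma in hand, the two reductions above complete the proof.
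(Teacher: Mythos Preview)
Your decomposition into upper and lower halves, with a ``one-sided lemma'' applied independently to each, has a genuine gap at the gluing step. After drawing $T^{+}$ and $T^{-}$ separately, a crossing edge $uw$ of $T$ has become a bent path $u\text{--}z\text{--}w$ with $u$ somewhere in the open upper half-plane, $w$ somewhere in the open lower half-plane, and $z$ on the axis. You propose to replace this path by the straight segment $uw$, arguing it stays in the two triangles $u\,z_{\mathrm{left}}\,z_{\mathrm{right}}$ and $w\,z_{\mathrm{left}}\,z_{\mathrm{right}}$. But that holds only if $uw$ meets the $x$-axis between $z_{\mathrm{left}}$ and $z_{\mathrm{right}}$, and nothing in your construction ensures this: the position of $u$ is produced by the one-sided lemma on $T^{+}$ and the position of $w$ by the one-sided lemma on $T^{-}$, with no coordination between them. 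Your shelling proof of the one-sided lemma explicitly places reinserted vertices ``far above the current drawing'' with no horizontal control, so $u$ may land far to the left of $z_{\mathrm{left}}$ while $w$ lands far to the right of $z_{\mathrm{right}}$, and then $uw$ crosses the axis outside the required interval. The sentence ``which the one-sided construction can be set up to allow'' is where the real theorem hides: it amounts to asking for a drawing of $T^{+}$ in which, for each crossing vertex $z$, the upper endpoint is constrained to lie in a cone over $(z_{\mathrm{left}},z_{\mathrm{right}})$ whose apex depends on the \emph{other} half-drawing --- a coupled system, not two independent problems.

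This coupling is exactly what the paper confronts head-on. Rather than cutting $T$ into halves, the paper reduces (by contracting and flipping edges away from the axis) to an \emph{A-graph} in which every edge crosses $Y$, sets up one linear equation per edge in the slope variables (concurrency constraints at vertices off $Y$, proportionality constraints at vertices on $Y$, plus boundary conditions), and proves by a continuity argument that this square system has a unique solution and that the solution is a plane drawing. The linear system is precisely the device that enforces the global compatibility your approach lacks: it pins down \emph{where} each edge crosses $Y$, simultaneously for edges coming from above and from below. A warning sign in your outline is that the one-sided lemma, as you state it, does not actually use the collinearity hypothesis --- any triangulated disk with a boundary path admits such a drawing --- so the hypothesis is doing no work in the step you identify as the hard one; it is the recombination step that needs it, and that step is missing.
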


Let $v(G)$ denote the largest free set for a planar graph $G$. Clearly, we have $v(G)\leq \tv(G) \leq \sv(G)$. Further, as discussed in detail below, it is well-known that $v(G)=\tv(G)$. However, prior to our work, the best known bound between $v(G)$,
$\tv(G)$, and $\sv(G)$ in the other direction was $v(G),\tv(G) \geq \sqrt{\sv(G)}$, proved by Ravsky and Verbitsky~\cite{ravsky.verbitsky:on}. 
Thanks to \thmref{our-bang}, we now know a stronger bound, in fact the ultimate $v(G)=
\tv(G) = \sv(G)$ relationship. This relationship was
previously only known for planar $3$-trees
\cite{dalozzo.dujmovic.ea:drawing}. \thmref{our-bang}, in fact, implies a stronger result than $v(G)= \tv(G) = \sv(G)$:



\begin{cor}\corlabel{our-all}
In a planar graph $G$, a set $S\subseteq V(G)$ is a free set if
and only if it is a \mbox{collinear set}.
\end{cor}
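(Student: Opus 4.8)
The plan is to prove the two implications of the equivalence separately, with essentially all of the content living in one of them. The forward implication is just unwinding definitions: if $S$ is a free set, then instantiating the definition of a free set with $X$ taken to be any set of $|S|$ distinct collinear points produces a plane straight-line drawing of $G$ in which the vertices of $S$ lie on those points, hence on a common line, so $S$ is a collinear set. Nothing beyond the definitions is needed here.

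For the reverse implication I would invoke \thmref{our-bang} as a black box. Suppose $S$ is a collinear set. Then \thmref{our-bang} tells us $S$ is a \emph{free collinear} set: $G$ has a plane straight-line drawing realizing $S$ on any prescribed \emph{collinear} target set. The remaining task is to delete the word ``collinear'', i.e.\ to show that a free collinear set is a free set; this is exactly the set-level statement behind the known equality $v(G)=\tv(G)$, and I would recover it as follows. Given an arbitrary target set $X=\{p_1,\dots,p_k\}$ with $k=|S|$, pick a line $\ell$ so that the orthogonal projections $q_1,\dots,q_k$ of $p_1,\dots,p_k$ onto $\ell$ are pairwise distinct. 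The $q_i$ form a collinear target set, so free collinearity supplies a plane straight-line drawing $D$ of $G$ realizing $S$ on $\{q_1,\dots,q_k\}$; the point of projecting onto a line is that $D$ already places the vertices of $S$ ``at the right positions along $\ell$'', and only their displacement perpendicular to $\ell$ remains. One then transforms $D$ into a drawing realizing $S$ on $X$ by first rescaling $D$ in the direction perpendicular to $\ell$ until everything not belonging to $S$ hugs $\ell$ in an arbitrarily thin strip, and then pulling the vertices of $S$ off $\ell$ to their targets $p_i$, one at a time and in a suitable order.

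I expect the main obstacle — and the only non-routine point outside of \thmref{our-bang} itself — to be verifying that this last deformation never introduces a crossing: as a vertex of $S$ leaves $\ell$, the edges incident to it pivot about their far endpoints and can, a priori, sweep over other vertices or edges of $D$, and the vertex itself can be ``blocked'' by a static edge lying above it. Controlling this is what forces the thin-strip rescaling and a careful choice of the line $\ell$ and of the order in which the vertices of $S$ are moved (so that each is extracted through a region one has arranged to be free of obstructions); this is the delicate part of the known relation $v(G)=\tv(G)$, and where one does not want to reprove it, one may simply cite it. Either way, the genuinely hard ingredient of \corref{our-all} is \thmref{our-bang}; the corollary itself is just the composition of that theorem with this technical-but-routine-in-spirit passage from collinear target sets to arbitrary ones, together with the one-line forward direction.
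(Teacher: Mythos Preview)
Your overall decomposition matches the paper exactly: the forward direction is immediate, and the backward direction is \thmref{our-bang} composed with the passage ``free collinear $\Rightarrow$ free''. The difference is in how that last passage is carried out, and the paper's version is considerably simpler than what you sketch.

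You propose to first squeeze the drawing into a thin strip about $\ell$ and then pull the vertices of $S$ off to their targets one by one in a careful order, and you correctly flag that controlling crossings during this extraction is the delicate point. The paper avoids this entirely by reversing the order of operations: after rotating so that no two target points share a $y$-coordinate, it uses free collinearity to place $S$ on the $y$-axis at the correct $y$-coordinates; then, because the set of plane straight-line drawings is open, it perturbs \emph{all} vertices of $S$ simultaneously to $(\epsilon x_i, y_i)$ for some sufficiently small $\epsilon>0$, which is still a plane drawing; finally it divides all $x$-coordinates by $\epsilon$, an affine map that trivially preserves planarity and lands $S$ exactly on $X$. So no one-at-a-time extraction, no ordering, no obstruction analysis---just ``small perturbation then affine scaling'', two steps each of which is planarity-preserving for free. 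Your route could presumably be pushed through, but it is strictly harder than necessary; the insight you are missing is that perturbing first and scaling afterwards makes the problem trivial.
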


That every free set is a collinear set is immediate. \thmref{our-bang} then implies \corref{our-all} since every free collinear set is also a free set. 
This fact, which implies that $v(G)=\tv(G)$, has been observed by several
authors~\cite{bose.dujmovic.ea:polynomial,dalozzo.dujmovic.ea:drawing,dujmovic:utility,gkossw-upg-09}. To
see it,
let $X=\{(x_1,y_1),\ldots,(x_{|S|},y_{|S|})\}$ be the desired target locations on
which
$S$ is supposed to be drawn. By rotation, we may assume that
no two points have the same y-coordinate. Let
$X_0=\{(0,y_1),\ldots,(0,y_{|S|})\}$.  By the definition of free
collinear set, $G$ has a plane straight-line drawing $\Gamma_0$ in
which $S$ maps to $X_0$.  Since the set of plane straight-line drawings of
$G$ is an open set, we can arbitrarily
perturb the vertices in some small neighborhood,
resulting in some
plane straight-line drawing $\Gamma_{\epsilon}$ in which $S$ maps to
$X_\epsilon=\{(\epsilon x_1,y_1),\ldots,(\epsilon x_{|S|},y_{|S|})\}$,
for some $\epsilon >0$.
Dividing all the $x$-coordinates of $\Gamma_\epsilon$ by $\epsilon$ then
yields a plane straight-line drawing 
in which $S$ maps to~$X$.

\begin{figure}[tb]
  \centering
  \ifSODA
  \includegraphics[scale=0.8,page=2]{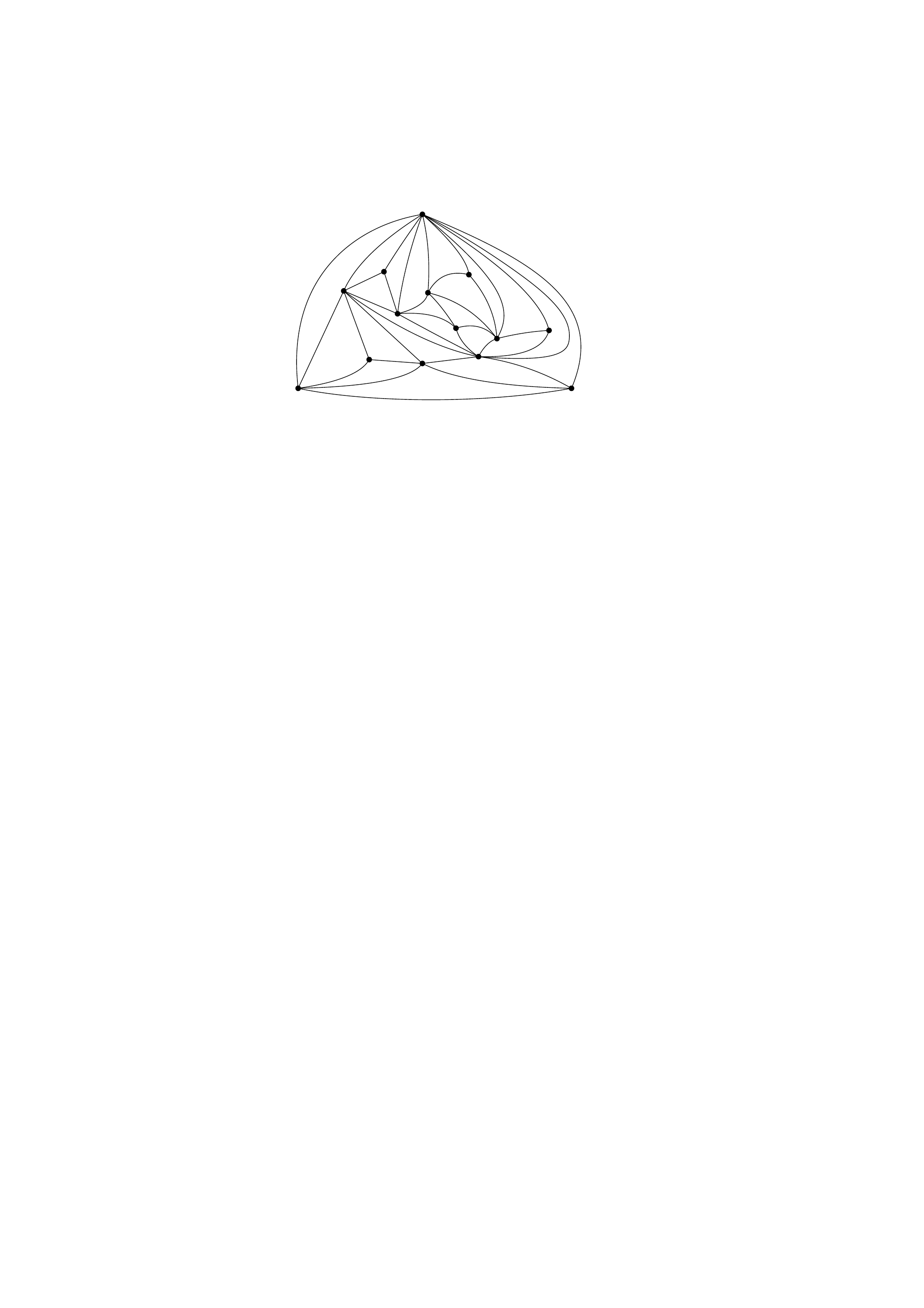}
  \else
  \includegraphics[page=2]{figs/nice-curve}
  \fi
  \caption{A proper good curve}
  \label{fig:proper-good}
\end{figure}

Thus, \thmref{our-bang} is our main result and this paper is dedicated to
proving it. The following
characterization of collinear sets by Da Lozzo,
Dujmovi\'c, Frati, Mchedlidze, and Roselli
~\cite{dalozzo.dujmovic.ea:drawing}  is helpful in that goal.

\begin{defn}
\label{proper-good}
  Given a drawing $G$,
  a
  Jordan curve $C$ is a \emph{proper good curve} 
  if it contains a point in the outer face
of $G$ and the intersection between $C$ and each edge $e$ of $G$ is
either empty, a single point, or the entire edge $e$.  
See \figref{proper-good} for an example.
\textup(This is a conjunction of the two definitions of
\emph{proper} and of \emph{good curves}
from~\cite{dalozzo.dujmovic.ea:drawing}.\textup)
\end{defn}

\begin{thm}\cite{dalozzo.dujmovic.ea:drawing} \thmlabel{collinear-set}
  A set $S$ of vertices of a graph $G$ is a collinear set if and
  only if there is a plane drawing of $G$ and a proper good curve $C$
  that contains every vertex in~$S$.
\end{thm}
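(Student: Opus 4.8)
The plan is to prove the two directions separately, and the forward direction is straightforward. If $S$ is a collinear set, I would fix a plane straight-line drawing $\Gamma$ of $G$ in which every vertex of $S$ lies on a common line $\ell$. Since each edge of $\Gamma$ is a line segment, its intersection with $\ell$ is empty, a single point, or the whole segment, so $\ell$ already satisfies the edge condition of a good curve; the only defect is that $\ell$ is not closed. To fix this, note that $\Gamma$ is bounded, say inside a disk $D$: the two rays of $\ell$ outside $D$ can be joined by an arc that stays outside $D$, hence inside the outer face and disjoint from every edge. The resulting Jordan curve $C$ meets each edge exactly as $\ell$ does, contains all of $S$, and contains points of the outer face, so it is a proper good curve. (Equivalently, pass to the sphere and take $\ell\cup\{\infty\}$.)

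For the reverse direction I start from a plane drawing $\Gamma$ of $G$ together with a proper good curve $C$ through $S$, and must build a straight-line drawing with $S$ collinear. First I would subdivide: for every edge that $C$ crosses at a single interior point, insert a dummy vertex at that crossing. After this, $C$ meets the graph only at vertices (the original vertices on $C$, including $S$, plus the dummies) and otherwise runs through faces or along edges lying entirely on $C$. Listing these vertices as $v_1,\dots,v_k$ in cyclic order along $C$, each arc of $C$ between consecutive $v_i,v_{i+1}$ lies in a single face, so I can add an edge $v_iv_{i+1}$ along that arc (to be discarded at the end), turning $C$ into a cycle $Z$ of the augmented plane graph. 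Because $C$ is proper, one arc of $C$ runs through the outer face, so one edge of $Z$ bounds the outer face, and $Z$ separates the graph into an inside part $H^{+}$ and an outside part $H^{-}$ that share only $Z$.

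The crux is to realize this structure with straight edges and $Z$'s vertices collinear. Cutting the sphere along $C$ at the outer-face arc opens $Z$ into a path $Q=v_1\cdots v_k$, with $H^{+}$ mapped into a closed upper half-plane and $H^{-}$ into a closed lower half-plane, meeting only along $Q$. I would place $v_1,\dots,v_k$ at $(1,0),\dots,(k,0)$ on a line $\ell$ and seek straight-line drawings of $H^{+}$ and $H^{-}$ in the two half-planes that agree on these points; gluing them yields a straight-line drawing of the augmented graph with $Q$, and hence $S$, collinear. To obtain each half-plane drawing I would triangulate the interior of each region so that its boundary together with $Q$ becomes internally $3$-connected, apply a convex straight-line drawing theorem such as Tutte's barycentric embedding with the outer boundary prescribed as a very flat convex polygon, and then control the limit as the polygon flattens onto $\ell$. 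Finally I delete the dummy vertices and the edges added along $C$, recovering a plane straight-line drawing of $G$ in which $S$ is collinear, so $S$ is a collinear set.

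I expect the flattening step to be the main obstacle: prescribing a degenerate (collinear) outer boundary is exactly where convex-drawing theorems and naive limiting arguments can fail, since interior vertices may collide or escape the half-plane as the boundary flattens. Making this step rigorous — guaranteeing a non-degenerate limit in which no two vertices coincide and planarity is preserved — is the heart of the argument, and is where I would invest most of the technical work, either through a compactness argument with explicit non-degeneracy bounds or by replacing Tutte's drawing with a constrained straight-line algorithm that places a prescribed vertex set on a line directly.
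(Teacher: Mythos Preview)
The paper does not prove this theorem at all: it is quoted from \cite{dalozzo.dujmovic.ea:drawing} and used as a black box (together with its strengthening, \thmref{dujmovic-frati}). So there is no proof in the paper to compare your proposal against.

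On its own merits: your forward direction is correct and is the standard argument. Your reverse direction is a reasonable outline, and you have correctly located the entire difficulty in the flattening step. But that step is not a detail to be filled in --- it is the whole theorem. Tutte's embedding with a degenerating convex boundary does not, in general, converge to a non-degenerate drawing: interior vertices can and do collide in the limit, and a compactness argument alone gives no control over this. The proof in \cite{dalozzo.dujmovic.ea:drawing} is substantial precisely because it has to construct the straight-line drawing with a collinear boundary directly, not as a limit of convex drawings. So your proposal is an honest sketch that isolates the right obstacle, but as written it does not contain a proof of the hard direction; the ``technical work'' you defer is the actual content of the result.
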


The \emph{if} part of this theorem says, in other words, that the
curve $C$ and the edges of $G$ can be simultaneouly straightened
 (after cutting $C$ open at
some point in the outer face)
while keeping the vertices of $S$ on~$C$.
 \thmref{collinear-set} is helpful because it reduces the problem of
finding large collinear sets in a graph $G$ to a topological game in
which one only needs to find a curve that contains many vertices
of $G$.  Da Lozzo \etal~\cite{dalozzo.dujmovic.ea:drawing}
 used \thmref{collinear-set} to give
tight lower bounds on the sizes of collinear sets in planar graphs
of treewidth at most 3 and triconnected cubic planar graphs. Despite the conceptual simplification provided by \thmref{collinear-set},
the identification of collinear sets is highly non-trivial:
Mchedlidze, Radermacher, and Rutter~
\cite{mchedlidze.radermacher.ea:aligned} showed that it is NP-hard to
determine if a given set of vertices in a planar graph is a collinear
set.
Nevertheless, \thmref{collinear-set} is a useful tool for finding large 
collinear sets. In combination with \corref{our-all}, it gives a
characterization of free sets:
\begin{cor}
	A set $S$ of vertices of a graph $G$ is a free set if and
	only if there is a plane drawing of $G$ and a proper good curve $C$
	that contains every vertex of $S$.
      \end{cor}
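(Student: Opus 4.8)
The statement is a direct composition of two equivalences that are already at our disposal, so the plan is simply to chain them. First I would invoke \corref{our-all}, which asserts that in a planar graph $G$ a set $S \subseteq V(G)$ is a free set if and only if it is a collinear set. Next I would invoke \thmref{collinear-set}, which asserts that $S$ is a collinear set if and only if there is a plane drawing of $G$ and a proper good curve $C$ containing every vertex of $S$. Transitively combining these two biconditionals immediately yields that $S$ is a free set if and only if such a plane drawing and proper good curve exist, which is exactly the claim.

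The only points that warrant a sentence of care are the implicit hypotheses and the scope of the objects asserted to exist. Free sets are defined only for planar graphs, and the right-hand condition---the existence of a plane drawing of $G$---already forces $G$ to be planar, so both \corref{our-all} and \thmref{collinear-set} apply to precisely the graphs under consideration and nothing is lost by not stating planarity explicitly. I would also flag that the plane drawing witnessing the collinear set in \thmref{collinear-set} need not be any one of the drawings realizing freeness: the definition of free set quantifies over \emph{every} target point set $X$ and produces a drawing per choice of $X$, whereas the proper good curve lives in a single plane drawing. The corollary only asserts the \emph{existence} of the drawing-and-curve pair, which is exactly what the chained equivalences deliver, so there is no tension here.

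Because the argument is a pure transitive composition of previously established biconditionals, I do not anticipate any genuine obstacle; there is no combinatorial or topological work remaining once the ingredients are assembled. All substantive content lives upstream---in \thmref{our-bang}, and hence in \corref{our-all}, on the one side, and in \thmref{collinear-set} on the other---so the proof reduces to one or two lines stating the chain of equivalences.
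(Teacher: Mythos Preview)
Your proposal is correct and matches the paper's own treatment: the paper introduces this corollary with the remark that \thmref{collinear-set} ``in combination with \corref{our-all}'' yields the characterization, which is precisely the chain of biconditionals you spell out. Your additional observations about planarity and the existential scope of the drawing are accurate but go beyond what the paper bothers to say.
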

This is a useful
tool for finding free sets, which have a wide variety of applications,
as outlined in the next section.

\subsection{Applications and Related Work}

The applicability of \corref{our-all} comes from the fact that a number of graph drawing applications require (large) free sets, whereas finding large collinear sets
is an easier task. Indeed there are planar graphs for which large collinear sets were known to exist, however large free sets were not. Those include triconnected cubic planar graphs
and planar graphs of treewidth at least $k$.
%
We now review applications of our result. 



\paragraph{Untangling.}  Given a straight-line drawing of a planar
graph $G$, possibly with crossings, to \emph{untangle} it means to assign
new locations to some of the vertices of $G$ so that the resulting
straight-line drawing of $G$ becomes noncrossing. The goal is to do so while
\emph{keeping fixed} the location of
as many vertices as possible. 

In 1998, Watanabe asked if every polygon can be untangled while keeping at least $\varepsilon n$ vertices
fixed, for some $\varepsilon >0$. Pach and Tardos\cite{pt-up-02} answered that question in
the negative by providing an $\mathcal{O}((n\log n)^{2/3})$ upper bound on the
number of fixed vertices. This has almost been  matched by
an 
$\Omega(n^{2/3})$ lower bound by Cibulka~\cite{c-upg-10}. Several papers have studied the untangling
problem~\cite{pt-up-02,cano.toth.ea:upper,c-upg-10,bose.dujmovic.ea:polynomial,gkossw-upg-09, kpr-upg-11,ravsky.verbitsky:on}. Asymptotically tight
bounds are known for paths \cite{c-upg-10}, trees \cite{gkossw-upg-09}, outerplanar graphs
\cite{gkossw-upg-09}, and planar graphs of treewidth two and three \cite{ravsky.verbitsky:on,
  dalozzo.dujmovic.ea:drawing}. 
  
For general
planar graphs there is still a large gap. Namely, it is known that every planar graph can be untangled while
keeping $\Omega(n^{0.25})$ vertices fixed
\cite{bose.dujmovic.ea:polynomial} (this answered a 
question by Pach and Tardos \cite{pt-up-02})  and that there are planar graphs
that cannot be untangled while keeping $\Omega(n^{0.4948})$ vertices
fixed \cite{cano.toth.ea:upper}. \thmref{our-bang} can help close this gap, whenever a good bound
on collinear sets is known.  
Namely, Bose \etal\cite{bose.dujmovic.ea:polynomial} (implicitly) and  Ravsky and Verbitsky
\cite{ravsky.verbitsky:on} (explicitly) proved that every straight-line
drawing of a planar graph $G$ can be untangled while keeping
$\Omega(\sqrt{|S|})$ vertices fixed, where $S$ is a free set of
$G$. Together with \corref{our-all} this implies that, for untangling, it is enough to
find large collinear sets.

\begin{thm}\thmlabel{our-untang}
Let $S$ be a collinear set of a planar graph $G$. Every straight-line drawing of $G$ can be untangled while keeping $\Omega(\sqrt{|S|})$ vertices fixed.
\end{thm}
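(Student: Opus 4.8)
The plan is to read off \thmref{our-untang} from \corref{our-all} and the free-set untangling bound stated just above it: by \corref{our-all} (which follows from \thmref{our-bang}), the collinear set $S$ is already a free set of $G$, and Bose \etal\ and Ravsky and Verbitsky have shown that every straight-line drawing of a planar graph can be untangled while keeping $\Omega(\sqrt{|F|})$ vertices fixed whenever $F$ is a free set; instantiating this with $F=S$ is the whole proof. From this vantage point the theorem carries no content of its own: everything has been pushed into \thmref{our-bang}, whose proof is the subject of the rest of the paper.

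If one wants an argument that does not use the free-set untangling bound as a black box, the only extra work is to reprove it, and I would do so as follows. Fix the drawing $D$ to be untangled, write $p_v$ for the location of $v$ in $D$, and let $v_1,\dots,v_m$ be the order in which the $m$ vertices of $S$ appear along the line witnessing that $S$ is collinear. One cannot simply relocate all of $S$ to $p_{v_1},\dots,p_{v_m}$: the correspondence between $S$ and its target points produced by the argument behind \thmref{our-bang} respects the linear order $v_1,\dots,v_m$, up to reversal and up to the choice of a direction along which the targets are sorted, so one must first pass to a subset of $S$ whose $D$-positions already occur in that order. Here the Erd\H{o}s--Szekeres theorem enters: among $y(p_{v_1}),\dots,y(p_{v_m})$ there is a monotone subsequence $y(p_{v_{i_1}}),\dots,y(p_{v_{i_k}})$ with $i_1<\dots<i_k$ and $k\ge\lceil\sqrt{m}\rceil$, and after breaking ties between equal $y$-coordinates by a slight tilt of the sorting direction, the points $p_{v_{i_1}},\dots,p_{v_{i_k}}$ appear in an order matching $v_{i_1},\dots,v_{i_k}$ read forwards or backwards.

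Since $S'=\{v_{i_1},\dots,v_{i_k}\}$ is again a collinear set of $G$ — restrict the witnessing drawing to it — \thmref{our-bang} applied to $S'$ with the target set $\{p_{v_{i_1}},\dots,p_{v_{i_k}}\}$ yields a plane straight-line drawing of $G$ in which every $v_{i_j}$ sits at $p_{v_{i_j}}$; this drawing untangles $D$ while keeping the $k=\Omega(\sqrt{|S|})$ vertices of $S'$ fixed. The step I would single out as the main obstacle in a self-contained treatment is exactly the one that forces the square root: freeness does not let all of $S$ be kept fixed at once, because the placements it provides are bound to a single linear order of $S$, and the Erd\H{o}s--Szekeres extraction — costing a $\sqrt{|S|}$ factor — is the price of reconciling that order with the arbitrary geometry of $D$. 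Regarded as a corollary of \corref{our-all} and the cited bound, by contrast, \thmref{our-untang} presents no obstacle at all.
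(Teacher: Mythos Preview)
Your first paragraph is exactly how the paper derives \thmref{our-untang}: it is stated as an immediate consequence of \corref{our-all} together with the free-set untangling bound of Bose \etal\ and Ravsky--Verbitsky, with no separate proof given. Your optional self-contained reconstruction via Erd\H{o}s--Szekeres is also correct and is essentially the argument behind the cited bound; the only slip is that in the final step you should invoke \corref{our-all} (the free-set property) rather than \thmref{our-bang} directly, since the target positions $p_{v_{i_j}}$ are not collinear---but you clearly have this distinction in hand from the first paragraph.
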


Da Lozzo,
Dujmovi\'c, Frati, Mchedlidze, and Roselli
~\cite{dalozzo.dujmovic.ea:drawing}  proved that
every triconnected cubic planar graph has a collinear set of size
$\Omega(n)$. Then \thmref{our-untang} implies the following new result,
for which $\Omega(n^{0.25})$ was a previously best known \mbox{untangling
bound.}

\begin{cor}\corlabel{our-cubic-unt}
Every straight-line drawing of any $n$-vertex triconnected cubic planar
graph can be untangled while keeping $\Omega(\sqrt{n})$ vertices fixed.
\end{cor}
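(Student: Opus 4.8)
The plan is to simply assemble the two ingredients that are already available. First I would invoke \thmref{our-untang}: for \emph{any} collinear set $S$ of a planar graph $G$, every straight-line drawing of $G$ can be untangled while keeping $\Omega(\sqrt{|S|})$ vertices fixed. Hence, to prove \corref{our-cubic-unt} it is enough to exhibit, in every $n$-vertex triconnected cubic planar graph, a collinear set whose size is $\Omega(n)$. This is precisely the lower bound established by Da Lozzo \etal~\cite{dalozzo.dujmovic.ea:drawing}, who used the proper good curve characterization (\thmref{collinear-set}) to show that every triconnected cubic planar graph admits a plane drawing and a proper good curve passing through a linear number of its vertices; so such a graph has a collinear set $S$ with $|S|=\Omega(n)$. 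Substituting $|S|=\Omega(n)$ into \thmref{our-untang} yields an untangling that keeps $\Omega(\sqrt{n})$ vertices fixed, which is exactly the claimed bound, and I would note for contrast that the previously best bound for this class was only $\Omega(n^{0.25})$ coming from the general planar untangling result of Bose \etal~\cite{bose.dujmovic.ea:polynomial}.

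There is essentially no obstacle left to overcome in this step: the substantive work has already been done upstream. It is \thmref{our-bang} — applied here through \thmref{our-untang} — that allows one to promote a \emph{collinear} set to a \emph{free} set, which is what the untangling machinery of~\cite{bose.dujmovic.ea:polynomial,ravsky.verbitsky:on} consumes; and it is the $\Omega(n)$ collinear-set bound of~\cite{dalozzo.dujmovic.ea:drawing} that supplies a large enough set to feed in. All that remains is the trivial arithmetic $\sqrt{\Omega(n)}=\Omega(\sqrt n)$. If I wanted to make the corollary self-contained I would instead inline the relevant untangling argument (partition the collinear set into points on a line, apply \thmref{our-bang} to realize a suitable line through many of them, and then argue that a sufficiently generic line through the fixed-point set keeps $\Omega(\sqrt{|S|})$ of them), but since \thmref{our-untang} is already stated in exactly the form needed, the clean route is just to cite it together with the collinear-set lower bound.
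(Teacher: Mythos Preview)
Your proposal is correct and matches the paper's own argument essentially verbatim: the paper derives \corref{our-cubic-unt} by combining \thmref{our-untang} with the $\Omega(n)$ collinear-set bound for triconnected cubic planar graphs from~\cite{dalozzo.dujmovic.ea:drawing}, exactly as you do.
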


\corref{our-cubic-unt} is almost tight due to the $\mathcal{O}(\sqrt{n\log^3n })$ upper bound for triconnected cubic planar graphs of diameter $\mathcal{O}(\log n)$ \cite{c-upg-10}. \corref{our-cubic-unt} cannot be extended to all bounded-degree planar graphs, see \cite{dujmovic:utility,DBLP:journals/dm/Owens81} for
reasons why.  Da Lozzo \etal~\cite{dalozzo.dujmovic.ea:drawing} also proved that planar graphs of treewidth at least
$k$ have $\Omega(k^2)$-size collinear sets. Together with
\thmref{our-untang}, this implies that 
%
every straight-line drawing of an $n$-vertex planar graph of treewidth
at least $k$ can be untangled while keeping $\Omega(k)$ vertices fixed. 
%
This gives, for example, a tight $\Theta(\sqrt{n})$
untangling bound for planar graphs of treewidth
$\Theta(\sqrt{n})$.


 \paragraph{Universal Point Subsets.}


Closing the gap between $\Omega(n)$ and $\mathcal{O}(n^2)$ on the size of the
smallest \emph{universal point set} (a set of points on which
every $n$-vertex planar graph can be drawn with straight edges by using $n$ of these
points as locations for the vertices) is a major, extensively studied, and difficult graph
drawing problem, open since
$1988$~\cite{
  dFPP90, DBLP:journals/ipl/Kurowski04, DBLP:journals/jgaa/BannisterCDE14}. 
  
The interest universal point sets motivated the following notion introduced by Angelini~\etal~\cite{abehlmmo-ups-12}. 
A \emph{universal point subset} for  a set $\mathcal{G}$ of $n$-vertex planar graphs is a
set $P$ of $k\leq n$ points in the plane such that, for every
$G\in\mathcal{G}$, there is a plane straight-line
drawing of $G$ in which $k$ vertices of $G$ are mapped to the $k$
points in $P$. Every set of $n$ points in general position is a
universal point subset for $n$-vertex outerplanar graphs
\cite{GMPP,DBLP:journals/comgeo/Bose02,DBLP:conf/cccg/CastanedaU96};  every
set of $\lceil \frac{n-3}{8}\rceil$ points in the plane is a universal
point subset for the $n$-vertex planar graphs of treewidth at most
three \cite{dalozzo.dujmovic.ea:drawing}; and, every set of $\sqrt{\frac{n}{2}}$ points in the plane is a universal point
subset for the $n$-vertex planar graphs \cite{dujmovic:utility}. 

Dujmovi\'c~\cite{dujmovic:utility}
  proved that every set of $v(G)$ points in the plane is a universal point subset
  for a planar graph $G$. Together with \corref{our-all} this implies
  that, in order to find large universal point subsets, it is enough to look for large collinear sets.

\begin{thm}\thmlabel{our-subset}
Let $S$ be a collinear set for a graph $G$. Then every set of $|S|$ points in the
plane is a universal point subset for $G$.
\end{thm}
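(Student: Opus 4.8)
The plan is to obtain \thmref{our-subset} as an essentially immediate consequence of \corref{our-all}, together with the definition of a free set (or, equivalently, with the theorem of Dujmovi\'c~\cite{dujmovic:utility} quoted just above the statement). First I would invoke \corref{our-all}: since $S$ is a collinear set of the planar graph $G$, it is also a \emph{free} set of $G$. Unwinding the definition of free set, this says precisely that for every set $X$ of points in the plane with $|X|=|S|$, the graph $G$ admits a plane straight-line drawing in which the vertices of $S$ are mapped bijectively to the points of $X$. In such a drawing, the $|X|=|S|$ points of $X$ serve as locations for $|S|$ vertices of $G$, which is exactly the condition required for $X$ to be a universal point subset for $G$ (taking $\mathcal{G}=\{G\}$ in the definition). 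Hence every $|S|$-point set in the plane is a universal point subset for $G$, as claimed.

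As an alternative route that stays closer to the surrounding exposition, I would instead note that \corref{our-all} gives $v(G)\ge |S|$, and then apply Dujmovi\'c's result~\cite{dujmovic:utility} that every set of $v(G)$ points in the plane is a universal point subset for $G$. The only step needing a word of justification here is the passage from ``every set of $v(G)$ points'' to ``every set of $|S|$ points'' when $|S|<v(G)$; this is handled by the easy observation that free sets are closed under taking subsets. Concretely, given a free set $S'\subseteq S$ and a target $X'$ with $|X'|=|S'|$, one extends $X'$ to a set $X\supseteq X'$ with $|X|=|S|$ by adding points in general position, applies freeness of $S$ to get a plane straight-line drawing mapping $S$ to $X$, and then restricts attention to $S'$ and $X'$. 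Thus $v(G)\ge |S|$ already implies that every $|S|$-point set is a universal point subset for $G$.

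I do not anticipate any genuine obstacle in this argument: all of the mathematical content lies in \thmref{our-bang} (transmitted through \corref{our-all}) and in the previously known reduction of~\cite{dujmovic:utility}, so \thmref{our-subset} is really a clean restatement of those facts in the vocabulary of universal point subsets. The only care required is definitional---checking that ``a plane straight-line drawing placing the vertices of $S$ at the points of $X$'' is correctly recognized as a witness that $X$ is a universal point subset for $G$, and dispatching the (inessential) size discrepancy noted above.
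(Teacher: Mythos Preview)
Your proposal is correct and matches the paper's own (one-line) argument: the paper simply says that Dujmovi\'c's result combined with \corref{our-all} yields the theorem, which is exactly your ``alternative route,'' while your first, more direct unpacking via the definition of a free set is the same argument spelled out. The size-discrepancy issue you raise in the second route is a non-issue once you observe, as you do in the first route, that $S$ itself is the free set to which one applies the definition (so there is no need to pass through $v(G)$ at all).
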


As was the case with untangling, \thmref{our-subset} implies new results
for universal point subsets of triconnected cubic planar graphs and
treewidth-$k$ planar graphs. In particular, \thmref{our-subset} and the
fact that every triconnected cubic planar graph has a collinear set of
size $\ceil{\frac{n}{4}}$ \cite{dalozzo.dujmovic.ea:drawing} imply the
following asymptotically tight result. The previously best known bound
was $\Omega(\sqrt{n})$ \cite{dujmovic:utility}.

\begin{cor}\corlabel{our-cubic-sub}
Every set of $\ceil{\frac{n}{4}}$ points in the plane  is a universal
point subset for  every $n$-vertex triconnected cubic
planar graph.
\end{cor}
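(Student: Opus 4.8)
The plan is to deduce this immediately from \thmref{our-subset} together with an existing structural bound on collinear sets of triconnected cubic planar graphs. First I would invoke the result of Da Lozzo \etal~\cite{dalozzo.dujmovic.ea:drawing}, cited in the paragraph preceding the corollary, that every $n$-vertex triconnected cubic planar graph $G$ admits a collinear set $S$ with $|S|=\ceil{\frac{n}{4}}$. Then, since $S$ is a collinear set for $G$, \thmref{our-subset} applies verbatim and yields that \emph{every} set of $|S|=\ceil{\frac{n}{4}}$ points in the plane is a universal point subset for $G$. As $G$ was an arbitrary $n$-vertex triconnected cubic planar graph, this is precisely the assertion of the corollary.

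I do not expect any genuine obstacle at this level: all of the difficulty has been front-loaded into the two ingredients. The substantive part of \thmref{our-subset} is \thmref{our-bang} (that every collinear set is free) combined with Dujmovi\'c's theorem~\cite{dujmovic:utility} that every set of $v(G)$ points is a universal point subset for $G$, while the $\ceil{\frac{n}{4}}$ bound is an independent result about the combinatorial structure of triconnected cubic planar graphs. Once both are granted, the corollary follows by a single substitution. The matching upper bound that makes the estimate asymptotically tight is discussed in the surrounding text and is not needed for the proof itself.
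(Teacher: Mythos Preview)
Your proposal is correct and matches the paper's argument exactly: the corollary is deduced in one step from \thmref{our-subset} combined with the $\ceil{n/4}$ collinear-set bound for triconnected cubic planar graphs from~\cite{dalozzo.dujmovic.ea:drawing}. The paper gives precisely this justification in the sentence introducing the corollary, with no additional ingredients.
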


Similarly, \thmref{our-subset} and the fact that planar graphs of
treewidth at least $k$ have collinear sets of size $ck^2$, for some
constant $c$ \cite{dalozzo.dujmovic.ea:drawing}, imply that every set
of  $c k^2$ points in the plane is a universal point subset for  such
graphs. This gives, for example, an asymptotically tight  $\Theta(n)$
result on the size of the largest universal point subset for planar
graphs of treewidth $\Theta(\sqrt{n})$. 

For similar applications of \thmref{our-bang}
and \corref{our-all}, such as \emph{column
planarity}~\cite{behks-cppsge-17,dalozzo.dujmovic.ea:drawing,dujmovic:utility}
and \emph{partial simultaneous geometric embeddings with and without
mappings}~\cite{behks-cppsge-17,ddlmw-pqp-15,dujmovic:utility} see a
survey by Dujmovi\'c~\cite{dujmovic:utility}.


\subsection{Proof Outline for \thmref{our-bang}}

We assume w.l.o.g.\ that $G$ is a plane straight-line
drawing in which the collinear set $S\subseteq V(G)$ lies
on the $y$-axis $Y=\{(0,y):y\in\R\}$. Let
$L=\{(x,y)\in\R^2:x<0\}$ and $R=\{(x,y)\in\R^2: x >0\}$ denote the open
halfplanes to the left and right of $Y$.
We consider the points on the $y$-axis $Y$ as being ordered,
with $(0,a)$ before $(0,b)$ if $a<b$. 
We
assume, furthermore, that we are given $|S|$ distinct $y$-coordinates,
and the goal is to find another plane straight-line drawing of $G$ in
which the vertices in $S$ are positioned on $Y$ with the given $y$-coordinates.


The difficulty comes from edges of $G$ that cross $Y$.
These edges must cross $Y$ in prescribed
intervals between the prescribed locations of vertices in $S$, and
these intervals may be arbitrarily small.  An extreme version of this
subproblem is the one in which $G$ is a drawing where every
edge intersects $Y$ in exactly one point (possibly an endpoint) and
the location of each crossing point is prescribed.  The most difficult
instances occur when $G$ is edge-maximal.

In \secref{quadrangulations} we describe these edge-maximal graphs, which
we call A-graphs.  A-graphs are a generalization of quadrangulations, in
which every face is either a quadrangle whose every edge intersects $Y$ or a triangle with one vertex in
each of $L$, $Y$, and $R$.  \thmref{a-graph} 
shows that it
is possible to find a plane straight-line drawing of any A-graph where
the intersections of the drawing with $Y$ occur at prescribed locations.
For this purpose, we set up a system of linear equations and show that
it has a unique solution. This proof involves linear algebra and
 continuity arguments.

In \secref{triangulations} we prove that every collinear set is free.
The technical statement of this result, \thmref{main}, shows a somewhat
stronger result for triangulations that makes it possible not only to
prescribe the locations of vertices on $Y$ but also to nearly prescribe the
points at which edges of the triangulation cross $Y$.  This proof uses combinatorial
reductions that are applied to a triangulation $T$ that either reduce its
size or increase the number of edges that cross $Y$.  When none of these
reductions is applicable to $T$, removing the edges of $T$ that do not
cross $Y$ creates an A-graph, $G$, on which we can apply \thmref{a-graph}.

\secref{definitions} begins our discussion with definitions 
 that we use throughout.


\section{
  Preliminaries}
\seclabel{definitions}

We recall some 
standard definitions. 


%
\ifSODA
\else
A \emph{curve} $C$ is a continuous function from $[0,1]$
to $\mathbb{R}^2$.  The points $C(0)$ and $C(1)$ are the \emph{endpoints} of $C$.  A curve $C$ is \emph{simple} if $C(s)\neq C(t)$
for $s\ne t$ 
except possibly for $s=0$ and $t=1$; it is \emph{closed} if $C(0)=C(1)$.  A \emph{Jordan
	curve} $C\colon [0,1]\to\R^2$ is a simple closed curve.  
We will often not distinguish between a curve $C$ and its
image $\{C(t):0\le t\le 1\}$. 
The \emph{open curve} is the set $\{C(t):0< t< 1\}$.
A point $x\in\R^2$ lies \emph{on} $C$ if $x\in C$.  
For any Jordan curve $C$, $\R^2\setminus C$ has two connected
components: One of these, $C^-$, is finite (the {\em interior} of $C$) and the other, $C^+$, is
infinite (the {\em exterior} of $C$).  
%


\fi

All graphs considered in this paper are finite and simple.   
We use $V(G)$ and $E(G)$ to denote the vertex set and edge
set of $G$, respectively.
We use $xy$
to denote the edge between the vertices 
$x$ and $y$. 

A \emph{drawing} 
of a graph $G$ consists of $G$ together with
 a one-to-one mapping $\varphi\colon V(G)\to\R^2$ and a mapping $\rho$ from
$E(G)$ to curves in $\R^2$ such that, for each $xy\in E(G)$, $\rho(xy)$
has endpoints $\varphi(x)$ and $\varphi(y)$.
We will not distinguish between a drawing $G$ and the underlying graph
$G$, and we will never
explicitly refer to $\varphi$ and $\rho$.
In particular, we will sometimes have a drawing $G$ and we will speak about constructing
a different drawing of $G$, without danger of confusion.
%
A drawing is \emph{straight-line} if each edge is a straight-line
segment. A drawing is \emph{plane} if each edge is a simple curve, and no
two edges intersect, except possibly at common endpoints. A
\emph{F\'ary drawing} is a plane straight-line drawing.
A \emph{plane straight-line graph} is a planar graph $G$ along with an associated \Fary\ drawing of $G$.

By default, an edge curve includes
its endpoints, otherwise we refer to it as an \emph{open} edge.
The \emph{faces} of a plane drawing $G$ are the maximal connected
subsets of $\R^2\setminus\bigcup_{xy\in E(G)} xy$.  One of
these faces, the \emph{outer} face, is unbounded; the other faces are called \emph{inner} or \emph{bounded} faces. A {\em boundary vertex} is incident to the outer face, other vertices are called \emph{interior} vertices.
 If
 $C$ is a
cycle 
 in a plane drawing, then
 there is a 
Jordan curve whose image
is the union of edges in $C$.  In this case, the interior and exterior of
$C$ refer to the interior and exterior of the corresponding Jordan curve.

A \emph{triangulation} (a \emph{quadrangulation}) is a plane drawing,
not necessarily with straight edges,
in which each face is bounded by a 3-cycle (respectively, a 4-cycle). 

A \emph{separating triangle} of a graph $G$ is
a cycle of length 3
 whose removal disconnects $G$.

The \emph{contraction} of an edge $xy$ in a graph $G$ identifies $x$
and $y$
into a new vertex~$v$.
Formally, we obtain a new graph $G'$ with
$V(G')=V(G)\cup\{v\}\setminus\{x,y\}$ and $E(G')=
\{\,ab\in
E(G): \{a,b\}\cap\{x,y\}=\emptyset\,\}\cup\{\,va: xa\in E(G)\text{ or }
ya\in E(G)\,\}$.  
If $G$ is a triangulation and we
contract the edge $xy\in E(G)$, then the resulting graph $G'$ is also
a triangulation provided that $xy$ is not part of a separating
triangle. 
Any plane drawing of $G$ leads naturally
to a plane drawing of $G'$.

\subsection{Characterization of Collinear Sets}




We will make use of the following strengthening of \thmref{collinear-set}
which follows from the proof in \cite{dalozzo.dujmovic.ea:drawing}:
\begin{thm}\thmlabel{dujmovic-frati}
  For any planar graph $G$, the following two statements are equivalent:
  \begin{compactenum}
    \item There is a plane drawing of $G$ and a proper good curve $C\colon
      [0,1]\to\R^2$ such that the sequence of edges and vertices
      intersected by $C$ is $r_1,\ldots,r_k$.
    \item There is a \Fary\ drawing of $G$ in which the sequence of
      edges and vertices intersected by the $y$-axis $Y$ is $r_1,\ldots,r_k$.
  \end{compactenum}
\end{thm}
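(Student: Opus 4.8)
The plan is to treat the two implications separately: $(2)\Rightarrow(1)$ is a routine observation, while $(1)\Rightarrow(2)$ is essentially the construction underlying \thmref{collinear-set} in~\cite{dalozzo.dujmovic.ea:drawing}, the only new point being that one must track the entire ordered sequence of intersections with the curve, not merely the set of vertices lying on it.

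For $(2)\Rightarrow(1)$, I would start from a \Fary\ drawing in which $Y$ meets the edges and vertices of $G$ in the sequence $r_1,\dots,r_k$. Since a \Fary\ drawing is bounded, I would pick $M$ so large that $G$ lies in the open disk of radius $M$ about the origin, and let $C$ be the Jordan curve consisting of the segment of $Y$ from $(0,-M)$ to $(0,M)$ followed by a simple arc back to $(0,-M)$ that lies, apart from its endpoints, in the open right halfplane and outside the disk of radius $M$. Then $C$ contains a point of the outer face of $G$ (any interior point of the arc), and $C\cap e=Y\cap e$ for every edge $e$ of $G$, which is empty, a single point, or all of $e$; hence $C$ is a proper good curve. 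Traversing $C$ upward from $(0,-M)$, it meets the edges and vertices of $G$ in exactly the sequence $r_1,\dots,r_k$, as required.

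For $(1)\Rightarrow(2)$, I would first recall that the proof of \thmref{collinear-set} in~\cite{dalozzo.dujmovic.ea:drawing} does more than place $S$ on $Y$: it simultaneously straightens $C$ onto $Y$ and $G$ into a \Fary\ drawing while preserving how $C$ meets $G$, which already yields the claim. To make the argument depend only on the \emph{statement} of \thmref{collinear-set}, I would augment $G$ as follows. Fix a point $p$ of $C$ in the outer face. Subdivide each edge of $G$ that $C$ crosses at a single interior point by a new \emph{division} vertex placed at that crossing; then, for each pair of vertices consecutive along $C$ other than the pair straddling $p$, add the arc of $C$ between them as a new edge, inserting one extra subdivision vertex on such an arc whenever this would create a multi-edge. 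Call the result $G''$. In $G''$ the curve $C$ is still a proper good curve (it now contains, or is internally disjoint from, every edge), still contains the outer-face point $p$, and contains a path $Q$ whose vertices, in order, are $r_1,\dots,r_k$ with the new subdivision vertices interleaved; hence the vertex set of $Q$ is a collinear set of $G''$. By \thmref{collinear-set}, $G''$ has a \Fary\ drawing in which the vertices of $Q$ lie on $Y$, and since $Q$ is a path, planarity forces them onto $Y$ in the path order — otherwise two edges of $Q$ would overlap — i.e.\ in the order $r_1,\dots,r_k$ with the auxiliary vertices interleaved. Deleting the edges and subdivision vertices added in the second step, and then suppressing each division vertex back into its original edge of $G$, should produce a \Fary\ drawing of $G$ whose intersections with $Y$ are exactly $r_1,\dots,r_k$.

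I expect the one delicate point — and the place where the details of~\cite{dalozzo.dujmovic.ea:drawing} genuinely enter — to be this last step: suppressing a division vertex $d$ on $Y$ is legitimate only if $d$ is collinear with the two $G$-endpoints of the edge it subdivides, equivalently only if those endpoints lie on opposite sides of $Y$ with their connecting segment crossing $Y$ at $d$. This is exactly the combinatorial faithfulness of the straightening that the construction of~\cite{dalozzo.dujmovic.ea:drawing} provides: the straightening carries the two sides of $C$ to the two sides of $Y$, and each division vertex, being on $C$, can be placed at the required point of $Y$. So the real work is to extract this property from their proof rather than to invent anything new; alternatively, once the two endpoints are known to lie on opposite sides of $Y$, a sufficiently small perturbation sliding $d$ to the segment's crossing point restores straightness without destroying planarity, and one argues such a perturbation is available. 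Along the way I would also verify that no edge of $G$ other than the $r_i$ meets $Y$ in the final drawing, which follows from the same faithfulness.
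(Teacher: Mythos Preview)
The paper does not actually prove this theorem; it simply states that it ``follows from the proof in~\cite{dalozzo.dujmovic.ea:drawing}.'' Your primary line for $(1)\Rightarrow(2)$---that the construction behind \thmref{collinear-set} already straightens $C$ onto $Y$ while preserving the full crossing sequence---is therefore exactly the paper's position, and your treatment of $(2)\Rightarrow(1)$ is a correct and welcome addition.

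Your secondary attempt, to derive $(1)\Rightarrow(2)$ purely from the \emph{statement} of \thmref{collinear-set} via the augmented graph $G''$, does not close, and the gap is precisely the one you flag. The statement of \thmref{collinear-set} gives no control over vertices outside the collinear set, so after you place the path $Q$ on $Y$ there is nothing forcing the two original endpoints of a subdivided edge to land on opposite sides of $Y$. A minimal instance already breaks it: if $G$ is a single edge $uv$ crossed once by $C$, then $G''$ is the path $u\,d\,v$ with $Q=\{d\}$; \thmref{collinear-set} only pins $d$ to $Y$, and a \Fary\ drawing with both $u$ and $v$ in $R$ is perfectly valid, after which suppressing $d$ yields an edge $uv$ that misses $Y$ entirely. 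The perturbation idea cannot rescue this, since the obstruction is global (wrong side), not local. So the ``sides-preserved'' property really must be imported from the proof in~\cite{dalozzo.dujmovic.ea:drawing}, as you ultimately conclude; the reduction to the bare statement is not salvageable without it.
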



\section{A-Graphs}
\seclabel{quad}
\seclabel{quadrangulations}

In this section, we study a special class of graphs that are closely
related to quadrangulations in which every edge crosses $Y$. (See \figref{a-graph} for an example.)

\begin{defn}\deflabel{a-graph}
	An \emph{A-graph}, $G$, is a plane straight-line graph with $n\ge 3$ vertices that has the following properties:
	\begin{compactenum}
		\item Every edge of $G$ intersects $Y$ in exactly one
                  point, possibly an endpoint.
                  \label{p1}
		\item Every face of $G$, including the outer face, is
                  a quadrilateral or a triangle (not containing any
                  disconnected components inside).
		\item Every quadrilateral face of $G$ is non-convex.
		\item Every triangular face contains one vertex
 on $Y$, one in $L$, and one in $R$.  
		\item Every vertex $v$ on $Y$ is incident to precisely
		two triangular faces, one ``above $v$'', which
                contains the line segment between $v$ and
                $v+(0,\epsilon)$ for some $\epsilon>0$, and one ``below
                $v$'', containing the line segment between $v$ and $v-(0,\epsilon)$ for some $\epsilon >0$.
                  \label{p-last}
	\end{compactenum}
\end{defn}

\begin{figure}
		\centering{\includegraphics[scale = 0.95]{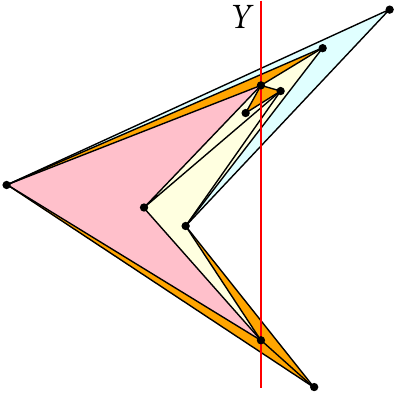}}
		\caption{An A-graph with 2 vertices on $Y$.}
		\figlabel{a-graph}
\end{figure}
	
In the special case where $G$ has no vertices in $Y$, the graph $G$ is a quadrangulation in which every edge crosses $Y$. Further, Property~5 applies even if $v$ is on the outer face of $G$ (in which case it implies that the outer face of $G$ must be a triangle).
Some additional properties of $G$ follow from 
Properties~\ref{p1}--\ref{p-last}.
\begin{compactenum}\setcounter{enumi}{5}
	\item $G$ is connected.
	\item Every vertex of $G$ has degree at least~2.   
	\item If $n\ge 4$, then every vertex in $Y$ has degree at least~3. 
\end{compactenum}
Property~6 follows directly from Property~2.
Property~7 follows from the fact that every vertex is incident to at
least one face and every face is a simple cycle.
Property~8 follows from the fact that every vertex on $Y$ is incident
to at least two triangular faces, which involve at least 4 vertices,
unless $n=3$.
(Property~3 and 4 are actually redundant---Property 3 follows from Properties~1 and 5; Property~4 follows from Property~1.)

%
%

In the following theorem,
we will show that every A-graph $G$ has a \Fary\ drawing with prescribed intersections with $Y$ and a prescribed outer face.

\begin{figure*}
   \begin{center}\begin{tabular}{ccc}
      \includegraphics{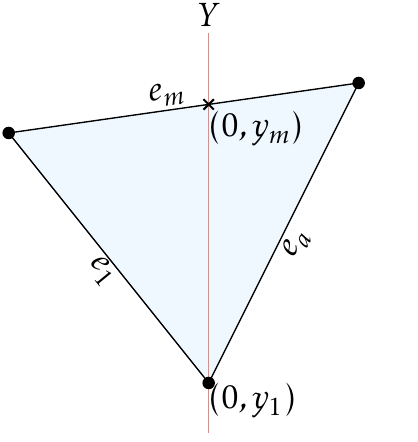} & 
      \includegraphics{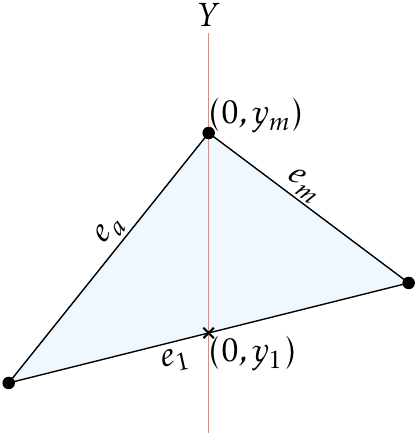} & 
      \includegraphics{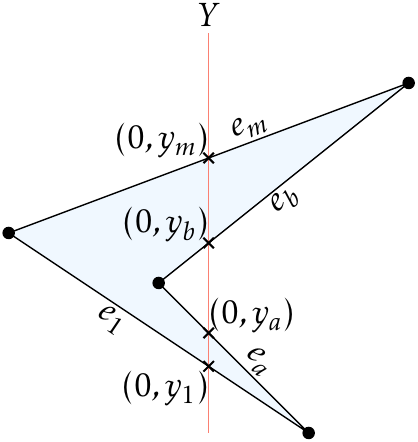} \\
      (a) & (b) & (c)
   \end{tabular}\end{center}
   \caption{The three possibilities for the outer face in
     \thmref{a-graph}.
   }
   \figlabel{outerface-cases}
\end{figure*} 

\begin{thm}\thmlabel{a-graph}\ 
\begin{compactitem}
\item Let $G$ be an A-graph.
\item Let $e_1,\ldots,e_m$ be the sequence of edges in $G$,
  in the order they are intersected by $Y$. Ties between edges having
  a common endpoint on $Y$ are broken arbitrarily,
except that $e_1$ and $e_m$ are
always edges on the outer face.
\item Let $y_1\le\cdots\le y_m$ be any sequence of numbers where, for
  each $i\in\{1,\ldots,m-1\}$, $y_i=y_{i+1}$ if and only if $e_i$
  and $e_{i+1}$ have a common endpoint in $Y$.
			
			
\end{compactitem}
Then $G$ has a \Fary\ drawing in which the intersection
between $e_i$ and $Y$ is the single point $(0,y_i)$, for each
$i\in\{1,\ldots,m\}$.

Moreover, the shape  $\Delta$ of the outer face can be prescribed,
subject only to the constraint that $\Delta$
has to be consistent with the graph and the data $y_1,\ldots,y_m$.
Specifically, we have three possibilities, which
 are illustrated in \figref{outerface-cases}.

\begin{compactenum}[a)]
\item If the outer face of $G$ is a triangle containing the lowest
  vertex on $Y$, then $\Delta$ must be a triangle with a vertex at
	$(0,y_1)$, and the opposite edge crosses $Y$ at $(0,y_m)$.
\item Symmetrically, if the outer face of $G$ is a triangle containing
  the highest vertex on $Y$, then $\Delta$ must be a triangle with a vertex
	at $(0,y_m)$, and the opposite edge crosses $Y$ at $(0,y_1)$.
\item Otherwise, the outer face of $G$ is a quadrilateral.  Let $e_1$,
  $e_a$, $e_b$, and $e_m$ be the 
  edges of the outer face.  Then $\Delta$ has to be a quadrilateral
	whose edges cross $Y$ at $(0,y_1)$, $(0,y_a)$, $(0,y_b)$, and $(0,y_m)$.
\end{compactenum}
\end{thm}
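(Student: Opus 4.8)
The plan is to set up a system of linear equations whose variables are the $x$-coordinates of the vertices not on $Y$ (the $y$-coordinates of all vertices are already determined: vertices on $Y$ by hypothesis, and vertices off $Y$ by the constraint that each incident edge crosses $Y$ at its prescribed height $(0,y_i)$, which forces the $y$-coordinate of an off-$Y$ endpoint of $e_i$ to lie on the line through $(0,y_i)$ and the other endpoint — but actually the cleanest bookkeeping is to treat each off-$Y$ vertex's coordinates as being pinned down by the incidences). Concretely, I would first observe that once we fix that edge $e_i$ passes through $(0,y_i)$, the position of a vertex $v\in L\cup R$ is constrained: if $v$ is the left endpoint of edge $e_i$ whose right endpoint is $w$, then $v$, $(0,y_i)$, $w$ are collinear. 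Using these collinearity relations together with the prescribed crossing heights, I would derive a linear system in the unknown $x$-coordinates (and the unknown $y$-coordinates of off-$Y$ vertices), one equation per edge that crosses $Y$ strictly in its interior, plus the boundary conditions coming from the prescribed outer face $\Delta$.

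The heart of the argument is to show this linear system has a \emph{unique} solution and that the solution yields a genuine \Fary\ drawing (no degeneracies, correct combinatorial embedding). For uniqueness I would argue by a ``harmonic-function'' / discrete-maximum-principle style argument: the difference of two solutions would be a drawing with all $Y$-vertices and all outer-face vertices at the origin-line or at zero displacement, and one shows that the only such solution is the zero displacement, because each interior off-$Y$ vertex's $x$-coordinate is a convex-type combination (or at least a sign-controlled combination, using Property~3 that quadrilateral faces are non-convex) of its neighbors' $x$-coordinates, so an extreme (most negative / most positive) $x$-coordinate among interior vertices would have to be attained on the boundary. Alternatively, and perhaps more robustly given the non-convex faces, I would count equations and unknowns to get that the coefficient matrix is square, and then prove nonsingularity via a connectivity argument (Properties~6--8 give that $G$ is connected with good minimum degree on $Y$), reducing to showing the homogeneous system has only the trivial solution.

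Once uniqueness/existence of the linear-algebraic solution is in hand, the remaining step is to verify that this solution is actually a plane drawing with the prescribed outer face shape $\Delta$ and the prescribed crossing heights. Here I would use a continuity/deformation argument: parametrize a path of valid data $y_1\le\cdots\le y_m$ (and outer-face shapes) from some ``canonical'' A-graph drawing — the original straight-line drawing of $G$, which is itself a \Fary\ drawing realizing \emph{some} admissible sequence $(y_i)$ — to the target data, keeping the combinatorial structure fixed; along this path the unique solution of the linear system varies continuously (by Cramer's rule, using nonsingularity), and I would show the drawing cannot degenerate (an edge collapsing, two edges crossing, a face flipping orientation) without violating the non-convexity of quadrilateral faces or the triangle-face sign conditions of Property~4. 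Thus planarity is preserved throughout, and at the endpoint we get the desired drawing. The three cases (a)--(c) for the outer face are handled uniformly by treating $\Delta$'s prescribed vertices/crossings as fixed boundary data in the linear system.

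The main obstacle I anticipate is proving nonsingularity of the linear system in the presence of non-convex quadrilateral faces: for convex faces one gets a clean Tutte-style spring embedding where the standard maximum principle applies immediately, but non-convex quads mean some ``weights'' are effectively negative, so the naive convex-combination argument fails and one must exploit the specific structure of A-graphs (the alternation of triangles on $Y$ and the fact that each $Y$-vertex sees exactly one triangle above and below it, Property~5) to control signs. I expect this is exactly where the ``linear algebra and continuity arguments'' mentioned in the proof outline do their real work, and where a careful face-by-face orientation bookkeeping — showing that the prescribed $Y$-crossings force each quadrilateral into its non-convex shape with a consistent reflex vertex — is needed to rule out nontrivial kernel elements.
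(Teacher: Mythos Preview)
Your high-level architecture---set up a square linear system, prove nonsingularity, then run a continuity argument from the initial drawing to the target data---matches the paper's. But the execution differs in ways that matter, and your proposal has a genuine gap at exactly the point you flag as the main obstacle.

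First, a difference in setup: the paper does \emph{not} use vertex coordinates as unknowns. It uses the \emph{slopes} $s_1,\ldots,s_m$ of the $m$ edges; since edge $e_i$ must pass through $(0,y_i)$, its supporting line is $y=s_ix+y_i$, and any two incident edges pin down an off-$Y$ vertex. This makes the bookkeeping clean: $m$ unknowns, and the equations are (i) concurrency constraints for each off-$Y$ vertex ($d_v-2$ equations per vertex), (ii) artificial \emph{proportionality constraints} for each $Y$-vertex prescribing fixed ratios among the incident slopes ($d_v-3$ equations), and (iii) four boundary equations fixing outer-face slopes. A short Euler-formula count shows this gives exactly $m$ equations. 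The proportionality constraints are the trick you are missing: their coefficients are \emph{read off from the initial drawing}, so the initial slopes are automatically a solution of the starting system. Without something like this, it is not clear your vertex-coordinate system is even square.

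Second, and more seriously, the paper never proves nonsingularity by a maximum-principle or connectivity argument of the kind you sketch---and your instinct that negative weights from non-convex quads kill the Tutte approach is correct. Instead the paper argues \emph{indirectly}: it introduces combinatorial \emph{ordering constraints} on slopes (capturing ``each vertex stays on its side of $Y$''), shows that any solution satisfying them is a \Fary\ drawing with all vertices inside $\Delta$, and then bootstraps: being inside a bounded box upgrades the ordering constraints to quantitative $\epsilon$-strong inequalities, which forces uniqueness (a nontrivial kernel would let you slide to a solution violating the strong form while still satisfying the weak form---contradiction). This simultaneously gives nonsingularity and planarity. The continuity argument then has one remaining failure mode to exclude: some slopes diverging to $\pm\infty$ at a critical parameter $t^*$. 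Ruling this out is the real technical core; the paper does it with a separate combinatorial lemma that dismantles the A-graph face by face from the boundary, showing the set of bounded-slope edges must be everything. Your proposal does not contain a mechanism for either the nonsingularity step or the no-blowup step, and the ``connectivity argument'' you mention is too vague to fill either gap.
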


It would have been more natural to represent the intersection of $Y$ with $G$
as a mixed sequence of vertices and edges. However, to simplify the
statement of the theorem and its proof, we have chosen to specify the
desired drawing by a number $y_i$ for every edge, subject to equality
constraints. 
The convention in Condition~2 about $e_1$ and $e_m$ being boundary
edges is introduced only for
 notational convenience.

The rest of this section is devoted to proving \thmref{a-graph}. We
are going to prove \thmref{a-graph} in its strongest form, in which the
outer face $\Delta$ is prescribed.  We begin by making some simplifying
assumptions, all without loss of generality.
%
%
%
First, we assume w.l.o.g.\ that $\Delta$ and all vertices of $G$
are contained in the strip $[-1,1]\times(-\infty,+\infty)$.  This can
be achieved by a uniform scaling.  Second, if the outer face of $G$
is a quadrilateral, we assume w.l.o.g.\ that the common vertex
of $e_1$ and $e_m$ in the given drawing of $G$ is in $L$, as in
Figures~\ref{fig:a-graph} and \ref{fig:outerface-cases}c, and the
vertex of desired output shape $\Delta$ incident to $e_1$ and $e_m$
is also in $L$; this can be achieved by a reflection of $G$ or $\Delta$
with respect to $Y$.


If $m=3$ or $m=4$, then $G$ is a 3- or a 4-cycle, respectively, hence it suffices to draw it as $\Delta$. Therefore we assume, from now on, that $m\ge 5$.  


We will describe the desired \Fary\ drawing by assigning a slope $s_i$
to each edge $e_i\in E(G)$.   Since there can be no vertical edges,
each slope $s_i$ is well-defined. We have $m=|E(G)|$ slope variables,
$s_1,\ldots,s_m$. 
We can see that these variables determine the drawing:
 Since every edge $e_i$ contains the point $(0,y_i)$,
the slope $s_i$ fixes the line through $e_i$.  Since every vertex $v$
not on $Y$ is incident to at least two edges that contain distinct points
on $Y$, the location of $v$ is fixed by any two of $v$'s incident edges.  
(The location of each vertex on
$Y$ is fixed by definition.)  Our strategy is to construct a
system of $m$ linear equations in the $m$ variables $s_1,\ldots,s_m$,
and to show that
this system is feasible and that its solution gives the desired \Fary\
drawing of $G$.

A necessary condition for the slopes to determine a F\'ary drawing of
$G$ is that the 
edges 
with a common vertex should be concurrent. Let $v$ be a vertex 
not on $Y$, and let $e_i, e_j, e_k$ be three edges incident to $v$.
The fact that the supporting lines of $e_i$, $e_j$, and $e_k$
meet at a common point (the location of $v$) is expressed by the following
\emph{concurrency constraint} in terms of the slopes $s_i,s_j,s_k$:
\ifSODA
\begin{align}\eqlabel{slope0} 
\left|
\begin{matrix}
1&1&1\\
s_i&s_j&s_k\\
y_i&y_j&y_k
\end{matrix}
\right|&=
({y_j{-}y_k}) s_i + ({y_k{-}y_i}) s_j 
         + ({y_i{-}y_j})s_k  
                               = 0
\end{align}
\else
\begin{equation}\eqlabel{slope0} 
\left|
\begin{matrix}
1&1&1\\
s_i&s_j&s_k\\
y_i&y_j&y_k
\end{matrix}
\right|=
({y_j-y_k}) s_i + ({y_k-y_i}) s_j 
+ ({y_i-y_j})s_k  = 0
\end{equation}
\fi
Since $y_1,\ldots,y_m$ are given, this is a linear equation
in $s_1,\ldots,s_m$.
Writing this equation for all triplets of edges incident to a common
vertex $v$ will include many redundant equations. Indeed,
if $v$ has degree $d_v$,
it suffices to take $d_v-2$ equations: For each vertex $v\in V(G)$, we choose two fixed
incident edges $e_i$ and $e_j$ and run $e_k$ through the remaining
$d_v-2$ edges, specifying that $e_k$ should go through the common vertex
of $e_i$ and $e_j$.

Whenever convenient, we will use edges of $G$
as indices so that, if $e=e_i$ is an edge of $G$, then $s_e=s_i$
and $y_e=y_i$.  Further, if $e$ is a line segment that
intersects $Y$ in a point, we will use $y_e$ to denote the $y$-coordinate
of the intersection of $e$ and $Y$ and $s_e$ to denote the slope of~$e$. 


We now introduce additional equations for the edges that emanate from a
vertex on $Y$; refer to \figref{proportional}.
Suppose that a vertex $v\in Y$ is incident to edges $a_1,\ldots,a_k\in L\cup Y$ 
and $b_1,\ldots,b_\ell\in Y\cup R$, ordered from bottom to top as in \figref{ab}.

\begin{figure*}
  \begin{center}
    \includegraphics{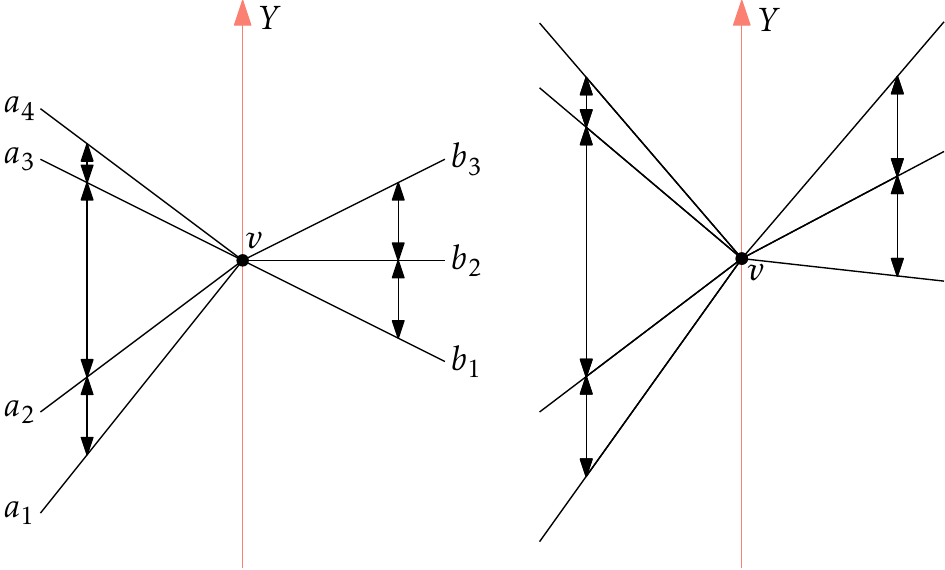}
  \end{center}
  \caption{The proportionality constraints on slopes of edges incident to a vertex $v\in Y$.}
  \figlabel{proportional}
\end{figure*}
From Property~4 of A-graphs we have $k,\ell\ge1$ and in addition
 $k+\ell\ge 3$ by Property~8.
Let us first look at the slopes on the right side.
We want these slopes to be increasing:
$s_{b_1} < s_{b_2} < \dots  <s_{b_\ell}$. We stipulate a stronger
condition:
We require that the slopes
$s_{b_2}, \dots, s_{b_{\ell-1}}$ partition the interval
$[s_{b_1},s_{b_\ell}]$ in fixed proportions. In other words:
\begin{equation}
\label{eq:proportion}
s_{b_i} = s_{b_1} + \lambda_i(s_{b_{\ell}}-s_{b_1}),
\end{equation}
for some fixed sequence $0<\lambda_2<\cdots<\lambda_{\ell-1}<1$.

For example, we might set $\lambda_i := (i-1)/(l-1)$.
This gives $\ell-2$ equations, for $\ell\ge 2$. Similarly, we get
$k-2$ equations for the slopes
$s_{a_1}, \dots, s_{a_{k}}$ of the edges on the left side, for $k\ge 2$.
In addition, for $k\ge 2$ and $\ell\ge 2$, we require that the \emph{range} of
slopes
on the two sides are in a fixed proportion:
\begin{equation}
\label{eq:proportion2}
s_{a_1}-s_{a_{k}} = \mu (s_{b_{\ell}}-s_{b_1}),
\end{equation}
for some fixed value $\mu>0$.

We call the equations
\thetag{\ref{eq:proportion}--\ref{eq:proportion2}} the
\emph{proportionality constraints}.
There are $(k+\ell)-3$ such equations for the $k+\ell$ slopes, hence
we have three degrees of freedom for the slopes incident to a vertex.
\figref{proportional} illustrates these  degrees of freedom:
Namely, we can shear the edges on the right side vertically, adding the same constant to all
slopes. We can independently shear all edges on the left side.
In addition, we can vertically scale {all} lines jointly (both to
the left and to the right), multiplying all slopes by the same constant factor.
If this factor is negative, we would reverse the order of the
slopes, simultaneously on the left and on the right. We will later see
that this undesirable possibility is prevented in conjunction with
other constraints that we are going to impose. We can already observe
that any two slopes on one side determine all remaining slopes on that side. Moreover, the range of slopes on the other side ($s_{a_1}-s_{a_{k}}$ or $s_{b_{\ell}}-s_{b_1}$) is also determined.
The notations $\lambda_i$ and $\mu$ are here used in a local sense;
for a different vertex $v$, we may choose different constants.
\begin{lem} \label{le:number-of-equations}
The total number of equations \thetag{\ref{eq:slope0}},~\thetag{\ref{eq:proportion}}, and~\thetag{\ref{eq:proportion2}} is $m-4$.
\end{lem}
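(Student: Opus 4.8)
The plan is to count the equations vertex by vertex and then collapse the count to $m-4$ using the handshake lemma and Euler's formula. Let $n_Y$ denote the number of vertices of $G$ on $Y$, so that $n-n_Y$ vertices lie off $Y$, and recall that no edge joins two vertices of $Y$ (a straight segment between two points of $Y$ would lie along $Y$, violating Property~\ref{p1}). For a vertex $v\notin Y$ of degree $d_v$ I would count its $d_v-2$ concurrency equations~\thetag{\ref{eq:slope0}}; for a vertex $v\in Y$ there are no nonvacuous concurrency equations, and I claim the proportionality equations~\thetag{\ref{eq:proportion}}--\thetag{\ref{eq:proportion2}} at $v$ number exactly $d_v-3$. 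Granting this, summing over all vertices and using $\sum_v d_v=2m$ gives
\[
  N \;=\; \sum_{v\notin Y}(d_v-2)\;+\;\sum_{v\in Y}(d_v-3)\;=\;2m-2(n-n_Y)-3n_Y\;=\;2m-2n-n_Y .
\]

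To prove the claim, write $d_v=k+\ell$ for $v\in Y$, with $k$ edges of $v$ entering $L$ and $\ell$ entering $R$. By construction \thetag{\ref{eq:proportion}} contributes $\max(k-2,0)+\max(\ell-2,0)$ equations and \thetag{\ref{eq:proportion2}} contributes one more exactly when $k\ge2$ and $\ell\ge2$. Since $m\ge5$ forces $n\ge4$ (a simple graph on at most $3$ vertices has at most $3$ edges), Properties~4 and~\ref{p-last} give $k,\ell\ge1$ and Property~8 gives $k+\ell=d_v\ge3$, so $k=\ell=1$ is impossible; a short check of the three remaining cases ($k=1$; $\ell=1$; $k,\ell\ge2$) shows the total is $k+\ell-3=d_v-3$ each time. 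This small case analysis is the one genuinely delicate point: the expression $\max(k-2,0)+\max(\ell-2,0)$ is not globally equal to $d_v-3$ unless one first rules out $k=\ell=1$, which is precisely what Property~8 supplies.

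Finally I would derive $m=2n+n_Y-4$ from the face structure. Let $f$, $f_3$, $f_4$ be the numbers of all faces, triangular faces, and quadrilateral faces; Property~2 gives $f=f_3+f_4$, and since $G$ is connected with every face bounded by a simple cycle, every edge lies on exactly two faces, so $2m=3f_3+4f_4$. The key identity is $f_3=2n_Y$: by Property~4 each triangular face has exactly one vertex on $Y$, and by Property~\ref{p-last} each vertex of $Y$ lies on exactly two triangular faces, so double-counting incident pairs $(\text{triangular face},\text{vertex of }Y)$ yields it. Then $f_4=(m-3n_Y)/2$, hence $f=(m+n_Y)/2$, and Euler's formula $n-m+f=2$ becomes $2n-m+n_Y=4$, i.e.\ $2n+n_Y=m+4$. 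Substituting into the displayed expression, $N=2m-(2n+n_Y)=2m-(m+4)=m-4$, as claimed. Apart from the case analysis above, the argument is routine bookkeeping with the handshake lemma, Euler's formula, and the incidence count $f_3=2n_Y$.
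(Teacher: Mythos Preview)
Your proof is correct and follows essentially the same approach as the paper's: count $d_v-2$ equations at each vertex off $Y$ and $d_v-3$ at each vertex on $Y$, sum to $2m-2n-n_Y$, and then use Euler's formula together with $f_3=2n_Y$ and $3f_3+4f_4=2m$ to reduce this to $m-4$. Your explicit case analysis verifying that the proportionality constraints at a vertex on $Y$ really number $d_v-3$ (ruling out $k=\ell=1$ via Property~8) is more detailed than the paper's proof, which simply asserts this count; the paper justified it informally in the discussion preceding the lemma.
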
 
\begin{proof}
Let $n=|V|$ and let $n_0$ be the number
of vertices on $Y$. Assume that $G$ has $f_3$ triangular and $f_4$
quadrangular faces.

We have two triangles for every vertex on $Y$ (Properties 4 and 5 of A-graphs):
\begin{equation}
\label{eq:f3}
f_3 = 2n_0
\end{equation}
Euler's formula gives
\begin{equation}
\label{eq:Euler}
n + f_3+f_4 = m+2.
\end{equation}
Double-counting of edge-face incidences leads to the relation
\begin{equation}
\label{eq:edge-face}
3f_3+4f_4=2m.
\end{equation}
Denoting the degree of a vertex $v$ by $d_v$,
we have $d_v-3$ equations for each of the $n_0$ vertices $v$ on $Y$. For each of the 
$n-n_0$ vertices $v$ not on $Y$, 
we have $d_v-2$ equations.
The total number of equations is therefore
\ifSODA
\begin{align*}
P &= 
\sum_{v\in V\cap Y}(d_v-3)+
    \sum_{v\in V\cap(L\cup R)}(d_v-2)
  \\&
=
\sum_{v\in V}(d_v-2)-n_0
=
2m-2n-n_0.
\end{align*}
\else
\begin{align*}
P &= 
\sum_{v\in V\cap Y}(d_v-3)+
    \sum_{v\in V\cap(L\cup R)}(d_v-2)
=
\sum_{v\in V}(d_v-2)-n_0
=
2m-2n-n_0.
\end{align*}
\fi
Using \thetag{\ref{eq:f3}--\ref{eq:edge-face}}, this can be
simplified to
\ifSODA
\begin{align*}
  P&
      =
   2m-2n-n_0\\
 &
                 = 2m -2n -(2f_3+2f_4) +(2f_3+2f_4)-n_0\\
&= 2m -2(n +f_3+f_4) +\tfrac12(4f_3+4f_4-f_3)\\
 &= 2m -2(m+2) +\tfrac12(2m) = m-4.
   \qedhere
\end{align*}
\vspace{-2,2\baselineskip}
\hrule height 0pt
\else
\begin{align*}
  P
      =
   2m-2n-n_0
 &
                 = 2m -2n -(2f_3+2f_4) +(2f_3+2f_4)-n_0\\
&= 2m -2(n +f_3+f_4) +\tfrac12(4f_3+4f_4-f_3)\\
 &= 2m -2(m+2) +\tfrac12(2m) = m-4.
   \qedhere
\end{align*}
\fi
\end{proof}

To achieve the desired number $m$ of equations, we add
four \emph{boundary equations}.  If the outer face is a quadrilateral,
the desired slopes of the boundary edges already give us four equations:
We set
the slopes $s_1$, $s_a$, $s_b$, and $s_m$ of the boundary edges $e_1$, $e_a$,
$e_b$, and $e_m$ to the fixed values of the slopes of the edges of
$\Delta$.

If the outer face is a triangle, the shape $\Delta$ gives us
only three constraints for the slopes of the three edges $e_1$, $e_a$,
$e_m$.  If the triangle is $\alpha\beta\gamma$ with $\gamma\in Y$,
we arbitrarily pick another (non-boundary) edge $e_b$ incident to $\gamma$
and set its slope $s_b$ to an appropriate fixed value; this value has
to be either larger or smaller than each of $s_1$, $s_a$, and $s_m$
depending on whether $\gamma$ is the topmost or the bottommost point
on $Y$ and whether $e_b$ lies in $L$ or $R$. Together with the
proportionality constraints, this effectively pins \emph{all} slopes
incident to $\gamma$ to fixed values.

In both cases, we get 4 equations of the form
\begin{equation}
  \label{eq:boundary}
  s_i = h_i, 
\end{equation}
where $i\in\{1,a,b,m\}$.

Altogether, we now have a system of $m$ linear equations
in the $m$ unknowns $s=(s_1,\ldots,s_m)$, which we can write
compactly as
$A\cdot s = b$, with a square matrix $A$ whose entries come from
\thetag{\ref{eq:slope0}--\ref{eq:proportion2}}
and \eqref{eq:boundary}.
Only four entries of
the right-hand side vector
$b$
are non-zero, due to the four boundary equations.
We will show that $A\cdot s=b$ has a unique
solution and that this solution gives a \Fary\ drawing of $G$.

\subsection{Setting the Proportionality Constraints}
\label{sec:setting}


Our plan is to construct the desired drawing by a continuous morph, 
starting from the given drawing of $G$. Since the proportioonality 
constraints are not part of the output specification but were 
artificially added to achieve the right number of equations, we can make 
our life easy by just setting their coefficients so that they are 
satisfied by the initial drawing.
Specifically, the statement of \thmref{a-graph} assumes that $G$ is a
\Fary\ drawing.  In this drawing, every edge $e$
has a slope $s_e'$.
We use these slopes to set the
coefficients in the proportionality constraints.
Consider a
vertex $v\in Y$, incident to edges $a_1,\ldots,a_k$ and $b_1,\ldots,b_\ell$
as described above.
In the notation used
in \eqref{eq:proportion}, we set
\[
\lambda_i = (s_{b_i}'-s_{b_1}')/(s_{b_\ell}'-s_{b_1}') .
\]
The coefficients for the edges
 $a_1,\ldots,a_k$ on the left side are set similarly.
If $k\ge2$ and $l\ge 2$,
we set
\[
\mu = (s_{a_1}'-s_{a_k}')/(s_{b_\ell}'-s_{b_1}')
\]
 in \eqref{eq:proportion2}.
This ensures that the initial slopes $s_{1}',\ldots,s_{m}'$ satisfy the
proportionality constraints.

\subsection{Ordering constraints}

We define a relation $\prec$ on the edges of $G$, where $e_1 \prec e_2$ if and only if
\begin{itemize}
	\item $y_{e_1} < y_{e_2}$ and $e_1$ and $e_2$ have a common endpoint $v\in L$; or
	\item $y_{e_1} > y_{e_2}$ and $e_1$ and $e_2$ have a common endpoint $v\in R$.
\end{itemize}
We say that a vector $s=(s_1,\ldots,s_m)$ \emph{satisfies the ordering
	constraints} if $s_{e_1} < s_{e_2}$ for every pair $e_1,e_2\in E(G)$
such that $e_1\prec e_2$. This definition captures the condition that vertices of $G$ in $L$ (respectively, $R$) should be drawn so that they remain in $L$ (respectively, $R$), as in the following. 

\begin{obs}\obslabel{left-right}
If a solution $s$ to $A\cdot s=b$ satisfies the ordering constraints, then every vertex that is in $L$ (in $R$) in $G$ is also in $L$ (respectively in $R$) in the drawing corresponding to $s$. 
\end{obs}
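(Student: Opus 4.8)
The plan is to reduce the claim to a one-dimensional sign computation carried out once for each vertex $v$ of $G$ that does not lie on $Y$. Fix such a vertex with $v\in L$; the case $v\in R$ is symmetric. In the drawing corresponding to a solution $s$ of $A\cdot s=b$, the edge $e_\ell$ lies on the line $\{(x,y):y=y_\ell+s_\ell x\}$, since it passes through $(0,y_\ell)$ with slope $s_\ell$. The location of $v$ in that drawing is the common point of the supporting lines of the edges incident to $v$: there are at least two such edges by Property~7 of A-graphs, and all these lines meet in a single point because the concurrency constraints \thetag{\ref{eq:slope0}} for $v$ are among the equations of $A\cdot s=b$.

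First I would observe that no two edges incident to $v$ can cross $Y$ at the same height: two straight segments sharing the endpoint $v$ and also passing through some point $(0,c)$ of $Y$ (with $(0,c)\neq v$, as $v\notin Y$) would have to overlap, contradicting planarity of the given drawing $G$. Hence I may choose two edges $e_i,e_j$ incident to $v$ with $y_i\neq y_j$, say $y_i<y_j$. Because $e_i$ and $e_j$ share the endpoint $v\in L$, the definition of $\prec$ gives $e_i\prec e_j$, and therefore $s_i<s_j$ since $s$ satisfies the ordering constraints; in particular $s_i\neq s_j$, so the two supporting lines are not parallel and meet at a single point, which is the location of $v$. Solving $y_i+s_i x=y_j+s_j x$, the $x$-coordinate of $v$ in the new drawing is
\[
x_v=\frac{y_j-y_i}{s_i-s_j}.
\]
The numerator is positive and the denominator is negative, so $x_v<0$ and $v$ lies in $L$, as claimed. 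On the right side the definition of $\prec$ instead forces $s_j<s_i$ whenever $y_i<y_j$, which flips the sign and puts $v$ in $R$.

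I do not expect a real obstacle here; the entire content is the bookkeeping of two signs. The only points needing a moment of care are that the two chosen edges have distinct $y$-intercepts — used both to make $x_v$ well defined and to pin down its sign — and that $s_i\neq s_j$, so the supporting lines actually meet in a finite point; the first follows from planarity of $G$ and the second is immediate from the (strict) ordering constraints. It is also worth recording that the ordering constraints were set up precisely so that the initial \Fary\ drawing of $G$ satisfies them — by the same computation run in reverse, in that drawing every $v\in L$ has edge slopes increasing with $y$-intercept, and symmetrically for $R$ — which is what will make them compatible with the continuous-morph strategy used in the rest of the proof of \thmref{a-graph}.
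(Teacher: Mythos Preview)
Your proof is correct and follows essentially the same approach as the paper: pick two edges incident to $v$ with distinct $y$-intercepts, use $\prec$ and the ordering constraints to compare their slopes, and read off the sign of the $x$-coordinate of their intersection. You supply more detail than the paper (the explicit formula for $x_v$, the non-parallelism of the two supporting lines, and a justification for $y_i\neq y_j$), but the argument is the same; one small point is that your planarity argument establishes distinct intercepts in the \emph{original} drawing, whereas the $y_i$ in your formula are the prescribed target values---the theorem's hypothesis that $y_i=y_{i+1}$ only when $e_i,e_{i+1}$ share an endpoint on $Y$ is what actually guarantees $y_i\neq y_j$ here.
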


\begin{proof}	  
Consider any vertex $v$ that is in $L$ in $G$ and that is incident to
(at least) two edges $e_1$ and $e_2$ with $y_{e_1} < y_{e_2}$, and hence $e_1 \prec e_2$. Since $s$ satisfies the ordering constraints we have $s_{e_1} < s_{e_2}$, hence the lines with slopes $s_{e_1}$ and $s_{e_2}$ through $(0,y_{e_1})$ and $(0,y_{e_2})$, respectively, meet in $L$. The argument for the vertices in $R$ is analogous. 
\end{proof}	  

By construction, the slopes $s_{e_1}',\ldots,s_{e_m}'$ of edges
in $G$ satisfy the ordering constraints, so the relation $\prec$ is acyclic.

\begin{lem}\lemlabel{order-gives-drawing}
	Any solution $s$ to $A\cdot s=b$ satisfying
	the ordering constraints 
	yields a
	\Fary\ drawing of $G$ with $\Delta$ as the outer face.
\end{lem}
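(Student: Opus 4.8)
The plan is to show that a solution $s$ to $A\cdot s=b$ satisfying the ordering constraints actually encodes a \Fary\ drawing, by checking the three things that can go wrong: vertices could land on the wrong side of $Y$, a face could become degenerate or self-intersecting, and edges from different faces could cross. The first is already handled by \obsref{left-right}. So the work is local: verify that each face of $G$ is drawn as a simple (non-degenerate) polygon of the correct combinatorial type, and then argue globally that the union of these correctly-drawn faces is a plane drawing.

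First I would fix the coordinates. Every vertex $v$ on $Y$ is at its prescribed location $(0,y_v)$. For a vertex $v\notin Y$, incident to edges $e_i,e_j$ with $y_{e_i}\neq y_{e_j}$, its position is the intersection of the line through $(0,y_{e_i})$ with slope $s_{e_i}$ and the line through $(0,y_{e_j})$ with slope $s_{e_j}$; the concurrency constraints \eqref{eq:slope0} guarantee this point is independent of which two incident edges we pick, so the position is well defined, and \obsref{left-right} puts it in the correct halfplane. Next I would go face by face. Consider a triangular face: by Property~4 it has one vertex $p\in L$, one vertex $q\in Y$, one vertex $r\in R$; since $p$ and $r$ are on opposite sides of $Y$ and $q$ is on $Y$, and $q$ is not on the open segment $pr$ (that segment crosses $Y$ at the point where edge $pr$ meets $Y$, whose $y$-coordinate differs from $y_q$ because $e_i\ne e_{i+1}$ forces distinct $y$-values unless the edges share a $Y$-endpoint), these three points are affinely independent, so the face is drawn as a genuine triangle. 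For a quadrilateral face, Property~1 says all four edges cross $Y$; the two $L$-vertices and two $R$-vertices alternate around the face, and the ordering constraints on the two edges incident to each $L$-vertex (respectively $R$-vertex) force the boundary to cross $Y$ monotonically, so the quadrilateral is drawn as a simple (non-convex, by Property~3) quadrilateral with the vertices in the correct cyclic order. Property~5, together with the proportionality constraints ensuring the slopes around a $Y$-vertex stay in the right order, guarantees that the two faces at each $Y$-vertex sit on opposite sides and the cyclic order of edges around every vertex is preserved.

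Then I would assemble the global picture. Having checked that each face is drawn as a simple polygon with boundary traversed in the correct orientation, and that the rotation system (cyclic order of edges) at every vertex — on $Y$ by the proportionality/ordering constraints, off $Y$ by the slope ordering together with \obsref{left-right} — agrees with the given plane embedding of $G$, I would invoke the standard fact that a drawing realizing a fixed combinatorial embedding with all faces drawn as simple polygons of the prescribed type is plane. Equivalently, one can sweep a horizontal line upward: at every height the line meets the drawing in a sequence of points that, reading left to right, lists the cells $L$-region, then the faces crossing $Y$ in order, then the $R$-region, matching the combinatorial structure, so no two edges can cross. Finally, the four boundary equations \eqref{eq:boundary} pin the slopes of the outer-face edges to those of $\Delta$, and since the outer face is drawn as a simple polygon with those slopes and the correct crossings on $Y$, it coincides with $\Delta$; in the triangle cases the auxiliary edge $e_b$ and the proportionality constraints make the whole star at $\gamma$ rigid so the two outer edges really do form the prescribed triangle.

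The main obstacle is the global planarity step: local correctness of each face is routine from the properties of A-graphs plus the ordering constraints, but one must be careful that the faces, each correctly drawn, do not overlap. The clean way around this is the horizontal-sweep argument above, which uses that \emph{every} edge crosses $Y$ (Property~1) so that at each height the left-to-right order along the sweep line is completely determined by the combinatorics and the monotone way each face meets $Y$; this leaves no room for an unexpected crossing. I would also need the (easy but essential) observation that the relation $\prec$ is acyclic, which is noted just before the lemma, so that "satisfies the ordering constraints" is a consistent requirement and the slopes around each vertex are totally ordered as the embedding demands.
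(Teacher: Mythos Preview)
Your overall plan is the same as the paper's: establish that (i) the cyclic order of edges at every vertex is preserved and (ii) every face cycle is drawn without self-crossings, then invoke the result of Devillers, Liotta, Preparata, and Tamassia that (i) and (ii) together imply planarity for a $2$-connected plane graph. But you skip the one genuinely delicate step.

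For an interior vertex $v\in Y$ with left edges $a_1,\dots,a_k$ and right edges $b_1,\dots,b_\ell$, the proportionality constraints do \emph{not} force the slopes into the correct order: they are equally satisfied if every slope at $v$ is reversed (a negative common scaling factor), giving $s_{b_1}\ge s_{b_\ell}$ and $s_{a_k}\ge s_{a_1}$. The paper explicitly warns of this when introducing the proportionality constraints. To exclude it, the paper uses Property~5: let $e$ be the edge opposite $v$ in the triangle below $v$ and $f$ the edge opposite $v$ in the triangle above $v$. The ordering constraints at the (off-$Y$) endpoints of $e$ give $s_{b_1}<s_e<s_{a_1}$, and those at the endpoints of $f$ give $s_{a_k}<s_f<s_{b_\ell}$; combined with the reversal hypothesis this is a contradiction. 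Your phrase ``the proportionality constraints ensuring the slopes around a $Y$-vertex stay in the right order'' is exactly where this argument is needed and missing; note that the ordering constraints themselves say nothing directly about a vertex on $Y$, so invoking ``proportionality/ordering constraints'' without this triangle argument is not enough.

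Two smaller issues. First, your quadrilateral analysis assumes two $L$-vertices and two $R$-vertices, but a quadrilateral face can have a vertex on $Y$ (as a ``side'' face of that vertex, not the one above or below it); the ordering constraints do not apply there. The paper covers this uniformly by observing that, at \emph{every} vertex of a quadrilateral, the two incident face-edges go to the same side, and then appealing to the already-established rotation property. Second, the horizontal-sweep alternative is not a valid argument: every edge of an A-graph crosses $Y$, but there is no monotonicity with respect to horizontal lines, so a horizontal sweep does not see the faces in any controlled left-to-right order.
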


\begin{proof}
	If $G$ is a plane drawing of a 2-connected graph, then another
	straight-line drawing $G'$ of the same graph $G$ is a \Fary\ drawing provided
	that two conditions are met:
	(i) For every vertex~$v$, the clockwise order of the
	edges around $v$ in $G'$ is the same as in $G$; and
	(ii) in the drawing $G'$, every face cycle of $G$ is drawn without crossings
	(Devillers, Liotta, Preparata, and Tamassia \cite[Lemma~16]{devillers.liotta.ea:checking}).
	
	In our case, $G'$ is a straight-line drawing of $G$ given by a solution
	to $A\cdot s = b$ that satisfies the ordering constraints.

	First we show that $G'$ satisfies condition (i).
        More specifically, we establish the following stronger
        property for every vertex $v$.
\begin{quote}
  \thetag{$*$}
The edges going to the right from $v$ are the same in $G$ and $G'$,
and their slopes have the same order in $G$ and $G'$.

The same properties hold for the edges to the left.
\end{quote}

        We distinguish the following cases:
	\begin{enumerate}
		\item $v\not\in Y$. Since $s$ satisfies the ordering
                  constraints, by \obsref{left-right} we know that
                  $v$ is on the same side ($L$ or $R$)
                  in $G$ and
                  in $G'$.
All incident edges go to one side.
                  This, together with the fact that the
                  orders in which the edges incident to $v$ intersect
                  $Y$ in $G$ and $G'$ agree implies that the
                  slope orders of the edges around $v$ in $G$ and $G'$ agree.
		
		\item
		$v\in Y$, with incident edges $a_1,\ldots,a_k\in
		L\cup Y$ and $b_1,\ldots,b_\ell\in Y\cup R$ as in
                \figref{ab}.
                Again, \obsref{left-right} ensures that
                these edges remain on the same side in $G'$.

\begin{enumerate}
\item If $v$ is a boundary vertex, 
  then the boundary equations fix the slopes of the two incident
  boundary edges, $a_1, b_1$ or $a_k, b_l$, plus a third edge.  As we
  already observed when the proportionality constraints were defined,
  these constraints then fix the slopes of all edges incident to $v$,
  so that their ordering agrees with that of $G$.
			
\item If $v$ is an interior vertex then,
  as discussed above, the proportionality constraints ensure that the
  slope order of $v$'s incident edges in $G'$ either matches that of
  $G$ on each side, or it is completely reversed on both sides.
  Let us assume for contradiction that the latter case happens:
  \begin{equation}
    \label{eq:not-ordered}
    s_{b_1}\ge s_{b_\ell}
    \text{ and }
    s_{a_k}\ge s_{a_1}
  \end{equation}
  Let $e$ be the third edge of the triangle with edges $a_1$ and
  $b_1$, and let $f$ be the third edge of the triangle with edges
  $a_k$ and $b_\ell$, see \figref{ab}.
			Then the ordering constraints for the endpoints of $e$ imply
			\begin{math}
			s_{b_1}<s_e<s_{a_1}
			\end{math},
			and the ordering constraints for the endpoints of $f$ imply
			\begin{math}
			s_{a_k}<s_f<s_{b_\ell}
			\end{math}.
			Together with \eqref{eq:not-ordered}, this leads to a contradiction.
		\end{enumerate}
\end{enumerate}

\begin{figure}
  \centering
  {\includegraphics[scale = 1]{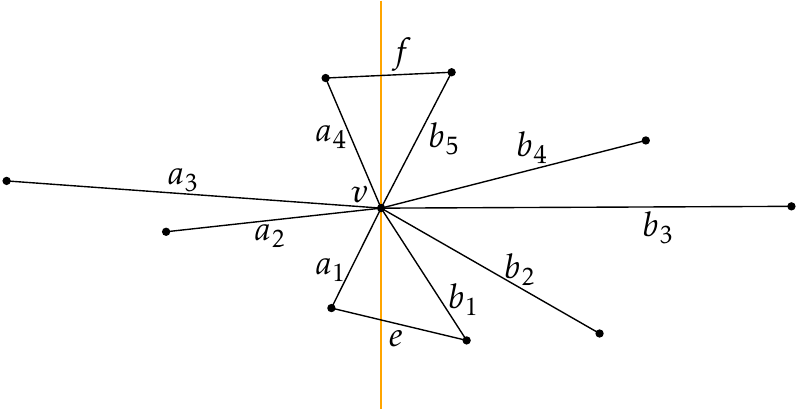}}
  \caption{The ordering of the edges incident to a vertex $v$ on $Y$.}
  \figlabel{ab}
\end{figure}

From the statement \thetag{$*$}, it is now easy to derive that
$G'$ satisfies condition (ii).
The graph $G$ has triangle and quadrilateral faces.
For a triangular face, \thetag{$*$} ensures that the triangle does not
degenerate, and is therefore non-crossing, in $G'$.
A quadrilateral face $q$ must be non-convex in $G$ by Property 3 of
A-graphs, and for each vertex, the two incident edges of~$q$ go in
the same direction (left or right).
Thus, Property \thetag{$*$} ensures that $q$ is non-crossing in~$G'$.


		
		
	Therefore, by the result of Devillers \etal\ cited above, $G'$
	is a \Fary\ drawing. That $G'$ has $\Delta$ as the outer
	face follows from the inclusion of the boundary equations in
	$A\cdot s = b$.
\end{proof}

Any solution $s$ to $A\cdot s=b$ has the outer face drawn as $\Delta$,
by the boundary equations, and the intersection between $e_i$ and $Y$ is $(0,y_i)$ by construction. Hence, by~\lemref{order-gives-drawing}, ensuring the existence of a solution $s$ to $A\cdot s=b$ satisfying the ordering constraints is enough to prove~\thmref{a-graph}.

\subsection{Strong Ordering Constraints}
\label{strong}

For some $\epsilon > 0$, we say that $s=(s_1,\ldots,s_m)$ satisfies
the \emph{$\epsilon$-strong ordering constraints} if, for each
$i,j\in\{1,\ldots,m\}$ such that $e_i\prec e_j$, the inequality
$s_j-s_i \ge \epsilon$ holds.
Clearly, any $s$ satisfying the $\epsilon$-strong ordering constraints
also satisfies the ordering constraints. 
The converse holds, for a suitably small $\epsilon$ (the inequalities
being strict in the definition of ordering constraints). The following
lemma tells us that this $\epsilon$ can be determined by $\Delta$ and by the
sequence $y_1,\ldots,y_m$.
%

\begin{lem}\lemlabel{weak-to-strong}
  If
  $\Delta\subset[-1,1]\times(-\infty,+\infty)$, then
	any solution $s$ to $A\cdot s=b$ that satisfies
	the ordering constraints
	also satisfies 
	the $\epsilon$-strong ordering constraints
	for all $\epsilon\le\min\{\,|y_i-y_j| : e_i\prec e_j\,\}$.
\end{lem}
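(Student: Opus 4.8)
The statement says: if $\Delta \subset [-1,1]\times(-\infty,\infty)$ and $s$ solves $A\cdot s = b$ and satisfies the (weak) ordering constraints, then $s$ automatically satisfies the $\epsilon$-strong ordering constraints for any $\epsilon \le \min\{|y_i - y_j| : e_i \prec e_j\}$. So I need to show that whenever $e_i \prec e_j$, the slope gap $s_j - s_i$ is at least $|y_i - y_j|$.

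Let me think about what $e_i \prec e_j$ means geometrically. Say $e_i$ and $e_j$ share a common endpoint $v \in L$ with $y_{e_i} < y_{e_j}$ (the $R$ case is symmetric). So $v = (x_v, y_v)$ with $x_v < 0$, and $v$ lies on both lines: the line through $(0, y_i)$ with slope $s_i$, and the line through $(0, y_j)$ with slope $s_j$. Writing $v$'s coordinates two ways: $y_v = y_i + s_i x_v = y_j + s_j x_v$. Hence $s_j x_v - s_i x_v = y_i - y_j$, i.e. $x_v (s_j - s_i) = y_i - y_j$, so $s_j - s_i = (y_i - y_j)/x_v = (y_j - y_i)/(-x_v)$. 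Since weak ordering holds, $s_j > s_i$, which forces $-x_v > 0$ (consistent with $v\in L$); and $y_j - y_i > 0$. Now the key: $v$ lies inside (or on) $\Delta$ — actually we need $v$ to lie in the strip $[-1,1]\times\mathbb{R}$. Given a weak-ordering solution, \obsref{left-right} and \lemref{order-gives-drawing} tell us $s$ yields a \Fary\ drawing of $G$ with outer face $\Delta$, so every vertex lies in the closed region bounded by $\Delta$, hence in the strip $[-1,1]\times\mathbb{R}$ by hypothesis. Therefore $x_v \ge -1$, i.e. $-x_v \le 1$, giving $s_j - s_i = (y_j - y_i)/(-x_v) \ge y_j - y_i = |y_i - y_j| \ge \epsilon$. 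The $R$ case, with $v\in R$ and $y_{e_i} > y_{e_j}$, is entirely analogous: one gets $s_j - s_i = (y_i - y_j)/x_v$ with $x_v \in (0,1]$ and $y_i - y_j > 0$, so again $s_j - s_i \ge y_i - y_j = |y_i - y_j|$.

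The main thing to be careful about is the logical dependency: I invoke \lemref{order-gives-drawing} to conclude that a weak-ordering solution genuinely produces a \Fary\ drawing with outer face $\Delta$, and hence that all vertices lie within $\Delta$ and thus within the strip $[-1,1]\times\mathbb{R}$ — that is where the hypothesis $\Delta\subset[-1,1]\times(-\infty,+\infty)$ is used. One should double-check there is no circularity (\lemref{order-gives-drawing} does not itself use \lemref{weak-to-strong}), which is fine since \lemref{order-gives-drawing} was proved using only the ordering constraints and the Devillers \etal\ criterion. The rest is the elementary computation above: parametrize the shared vertex $v$ by the two incident edge-lines, solve for $s_j - s_i$ in terms of $x_v$ and the $y$-gap, and bound $|x_v| \le 1$. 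I expect essentially no obstacle here — the only subtlety is stating cleanly why the shared endpoint $v$ is constrained to the strip, and handling the two symmetric cases ($v\in L$ versus $v\in R$) in parallel.
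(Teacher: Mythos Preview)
Your proposal is correct and follows essentially the same approach as the paper: invoke \lemref{order-gives-drawing} to confine every vertex to $\Delta$ and hence to the strip $[-1,1]\times\R$, then use that the common endpoint of $e_i$ and $e_j$ has $x$-coordinate $(y_j-y_i)/(s_i-s_j)$, so $|x|\le 1$ immediately yields $|s_i-s_j|\ge|y_i-y_j|\ge\epsilon$. Your check for non-circularity with \lemref{order-gives-drawing} is a good observation and is indeed satisfied.
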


\begin{proof}
	By \lemref{order-gives-drawing} every vertex is contained in
	 $\Delta$. 
	Hence, every $x$-coordinate is in the interval $[-1,1]$.
	If $e_i\prec e_j$, then the common vertex of $e_i$ and $e_j$ has $x$-coordinate
	$(y_j-y_i)/(s_i-s_j)$.	From $|(y_j-y_i)/(s_i-s_j)|\le 1$ we
	derive $|s_i-s_j|\ge|y_j-y_i| \ge \epsilon$.
\end{proof}

\subsection{Uniqueness of Solutions Satisfying Ordering Constraints}

\lemref{weak-to-strong} and the $\epsilon$-strong ordering
constraints play a crucial role in our proof because they
allow us to appeal to continuity: If the slopes change continuously,
it is impossible 
to violate
the ordering constraints without first violating the
$\epsilon$-strong ordering constraints.
But since the ordering constraints imply the
$\epsilon$-strong ordering constraints,
it is impossible
to violate
the ordering constraints at all.
An example of this argument will be seen in the following proof.

\begin{lem}\lemlabel{unique}
	If $s$ is a solution to $A\cdot s=b$ that satisfies the ordering
	constraints, 
	then $s$ is 
	the unique solution to $A\cdot s=b$.
\end{lem}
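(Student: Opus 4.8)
The plan is to prove uniqueness by a dimension/rank argument on the linear system combined with the continuity-plus-strong-ordering trick advertised just before the statement. Suppose $s$ is a solution of $A\cdot s=b$ satisfying the ordering constraints, and suppose $s'$ is any other solution. Then $d:=s'-s$ lies in the kernel of $A$, i.e.\ $d$ is a solution of the homogeneous system $A\cdot d=0$. I want to conclude $d=0$. The idea is to consider the one-parameter family $s_t:=s+t\,d$ for $t\in[0,1]$; each $s_t$ solves $A\cdot s_t=b$ because $d\in\ker A$. At $t=0$ we have the solution $s$, which by \lemref{weak-to-strong} actually satisfies the $\epsilon$-strong ordering constraints for the fixed $\epsilon:=\min\{\,|y_i-y_j|:e_i\prec e_j\,\}>0$ (this is where we use that the $y_i$ corresponding to $\prec$-comparable edges are genuinely distinct, since $e_i\prec e_j$ forces $y_i\neq y_j$).

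Now run $t$ upward from $0$. The set of $t\in[0,1]$ for which $s_t$ satisfies the ordering constraints is nonempty (contains $0$) and, I claim, both open and closed in $[0,1]$, hence equals $[0,1]$. Openness is immediate since the ordering constraints are finitely many strict inequalities and $s_t$ depends continuously (affinely) on $t$. For closedness: if $s_t$ satisfies the ordering constraints for all $t<t_0$, then by continuity $s_{t_0}$ satisfies the non-strict versions $s_{e_j}-s_{e_i}\ge 0$; but the key point is that each $s_t$ with $t<t_0$ satisfies the ordering constraints, hence by \lemref{weak-to-strong} satisfies the $\epsilon$-strong constraints $s_{e_j}-s_{e_i}\ge\epsilon$, and this closed condition passes to the limit, so $s_{t_0}$ also satisfies the $\epsilon$-strong — in particular the ordinary — ordering constraints. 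Therefore $s_t$ satisfies the ordering constraints for every $t\in[0,1]$; in particular so does $s_1=s'$.

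Having established that both $s$ and $s'=s+d$ satisfy the ordering constraints, I would now use the structure of the proportionality constraints to pin down $d$. By \lemref{order-gives-drawing}, both $s$ and $s'$ yield \Fary\ drawings of $G$ with outer face $\Delta$. The boundary equations \eqref{eq:boundary} force $d_i=0$ for $i\in\{1,a,b,m\}$, so $d$ vanishes on the four boundary slopes. The aim is to propagate these zeros through the graph: because the two drawings share the same outer face $\Delta$ and the same intersection points $(0,y_i)$ with $Y$, and because (by statement $(*)$ in the proof of \lemref{order-gives-drawing}) each vertex has the same rotation system and the same left/right side in both drawings, one can argue vertex-by-vertex — starting from a boundary vertex with two fixed incident edges, the concurrency constraint \eqref{eq:slope0} determines that vertex's location, hence all its incident edges' slopes, and one sweeps inward. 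Equivalently: two \Fary\ drawings of the 2-connected plane graph $G$ with the same outer face boundary and the same set of $Y$-crossing data must coincide, because each interior vertex is the unique common intersection point of its incident edge-lines, which are already determined once two of them are. This forces $d=0$, giving $s'=s$.

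The main obstacle I expect is the last propagation step: making precise the claim that the boundary data plus the $Y$-intersection data plus the rotation system rigidly determine the entire drawing. One must be careful that the induction on "distance from the boundary" is well-founded and that at each step the vertex being resolved really does have (at least) two already-resolved incident edges whose slopes are known — this should follow from $2$-connectedness and the face structure (every vertex is on a triangle or quadrilateral whose other vertices/edges are closer to the boundary), but it needs the combinatorial bookkeeping of A-graphs. An alternative, cleaner route that avoids this entirely is to prove directly that $A$ has trivial kernel among vectors satisfying the \emph{homogeneous} ordering-type information: the continuity argument already shows $d\in\ker A$ together with $s$ satisfying the ordering constraints implies $s+d$ does too, and then one can instead show $A$ restricted appropriately is injective by a counting argument (\lemref{le:number-of-equations} gives exactly $m$ equations in $m$ unknowns, so it suffices to show $A$ has full rank $m$; full rank is equivalent to the homogeneous system having only the zero solution, which is what the rigidity of the drawing delivers). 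I would present the continuity step first since it is the conceptually novel ingredient, then finish with whichever rigidity argument is shortest given the machinery already in the paper.
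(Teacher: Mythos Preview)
Your continuity argument on $[0,1]$ is correct and is the same mechanism the paper uses, but you stop too soon. Having shown that $s'=s+d$ also satisfies the ordering constraints, you still need $d=0$, and for this you propose a rigidity/propagation argument that you yourself flag as the ``main obstacle'' and do not carry out. Completing that propagation would require combinatorial bookkeeping of roughly the same shape as \lemref{partition-extended} later in the paper (sweeping inward from the boundary through the face structure of the A-graph), which is much heavier than what is needed here; and your ``alternative, cleaner route'' of showing $\ker A=0$ directly is precisely the statement of the lemma, so it is not an alternative.

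The paper avoids the rigidity step entirely by pushing the continuity argument beyond $[0,1]$. The observation you are missing is this: since $d\in\ker A$ vanishes on the four fixed slopes $s_1,s_a,s_b,s_m$ but is nonzero, and $G$ is connected with $m\ge 5$, there is a vertex with two incident edges $e_k,e_\ell$ satisfying $d_k=0$ and $d_\ell\neq 0$. One can then choose $\lambda\in\R$ making $(s_\ell+\lambda d_\ell)-(s_k+\lambda d_k)=0$, so the function $f(\lambda):=\min\{(s_j+\lambda d_j)-(s_i+\lambda d_i):e_i\prec e_j\}$ attains a nonpositive value somewhere. Now take $\lambda^*$ of smallest absolute value with $f(\lambda^*)\le\epsilon/2$; your own open-and-closed reasoning, applied on the interval between $0$ and $\lambda^*$, shows the ordering constraints hold throughout that interval, whence \lemref{weak-to-strong} forces $f(\lambda^*)\ge\epsilon$, a contradiction. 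So the entire proof is the continuity step plus this one connectivity observation; no separate rigidity argument is needed.
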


\begin{proof}
	Assume that $\epsilon$ is fixed so that $0<\epsilon\le\min\{\,|y_i-y_j| : e_i\prec e_j\,\}$.
	
	Suppose, for contradiction, that there is a solution $s$ to
        $A\cdot s=b$ that satisfies the ordering
        constraints, 
	but is not unique.  Since $A\cdot s=b$ is a linear system, it
        must then have a 1-parameter family of solutions $s+\lambda r$,
        $\lambda\in\mathbb R$, for some non-zero $m$-vector $r$.

	Define the continuous (in fact, piecewise linear) function
	\begin{equation*}
	f(\lambda) := \min \{\, (s_j+\lambda r_j)-(s_i+\lambda r_i) : e_i \prec
	e_j\,\}
	.
	\end{equation*}
 Let $\lambda^*$ be the value with the smallest absolute value
	$|\lambda^*|$ such that
	$f(\lambda^*)\le\epsilon/2$. In order to prove that
        $\lambda^*$ exists, it suffices to prove that $f(\lambda)\le
        0$ can be achieved. The vector $r=(r_1,\ldots,r_m)$ has at least four zero entries
	$r_1=r_a=r_b=r_m=0$ since the slopes $s_1$, $s_a$, $s_b$, and $s_m$
	are fixed.
	Since $G$ is connected and $m\geq 5$, there is at least one vertex $v$ with two incident edges $e_k$
	and $e_\ell$ such that $r_k=0$ and $r_\ell\neq 0$. 
	We can thus pick $\lambda$ so that $(s_\ell+\lambda r_\ell)-(s_k+\lambda r_k)=s_\ell-s_k+\lambda r_\ell=0$,
	and then $f(\lambda)\le 0$. It follows that $\lambda^*$ exists.
	
	Now we know that, for any $\lambda$ between $0$ and $\lambda^*$ and for any $i$ and $j$ such that $e_i\prec e_j$, the difference $(s_j+\lambda r_j)-(s_i+\lambda r_i)$ has the same sign as $s_j-s_i$. It follows that the slopes satisfy the ordering constraints throughout
	this interval, but then
	\lemref{weak-to-strong} implies that $f(\lambda^*)\ge\epsilon$, a contradiction.
\end{proof}


\subsection{A Parametric Family of Linear Systems}

We now define a parametric family of linear systems
 $A^t\cdot s = b^t$, parameterized by $0\le t\le 1$
 by varying the intersection points $y=(y_1,\ldots,y_m)$
 and the boundary slopes $h=(h_1,h_a,h_b,h_m)$.
 Let us first see how the coefficients $A$ and right-hand sides $b$ of the system change when these data are changed.
 The coefficients of the
 concurrency constraints
 \eqref{eq:slope0} depend linearly on $y$,
 whereas
 the
 proportionality constraints 
 \thetag{\ref{eq:proportion}--\ref{eq:proportion2}} remain unchanged,
 and the boundary constraints~\eqref{eq:boundary} have just the constant coefficient~1.
In the right-hand sides $b^t$,
the four nonzero entries
are the four slopes
$h=(h_1,h_a,h_b,h_m)$.

We derive the intermediate systems 
$A^t\cdot s = b^t$ by linear interpolation between the initial data
and the target data:
For the ``starting system'', we use
the intercepts
 $y^0=(y_1^0,\ldots,y_m^0)$
 and the slopes
 $h^0=(h_1^0,h_a^0,h_b^0,h_m^0)$
 of the edges in the initial drawing~$G$.
In the ``target system'', we use
the specified target intercepts
 $y^1=(y_1,\ldots,y_m)$
 and the slopes
 $h^1=(h_1,h_a,h_b,h_m)$ from the target shape~$\Delta$.
(If $\Delta$ is a triangle, this vector includes $h_b$ as an arbitrarily chosen
additional slope, as described earlier.)

We define the intermediate data $y^t$ and $h^t$
by linear interpolation:
\begin{equation*}
  y^t = (1-t)y^0 + ty^1,
  \qquad
  h^t = (1-t)h^0 + th^1.
\end{equation*}
This defines the corresponding intermediate systems 
$A^t\cdot s = b^t$, whose coefficients and right-hand sides depend
linearly on the parameter~$t$.

It is important to note that
the starting system
$A^0\cdot s = b^0$
has at least one solution, 
namely the slopes
$s^0=(s_1^0,\ldots,s_m^0)$
 of the edges in the initial drawing $G$.
 The proportionality constraints
  \thetag{\ref{eq:proportion}--\ref{eq:proportion2}} were designed
  in this way, as described in \secref{setting}.
 The
 concurrency constraints
 \eqref{eq:slope0} are fulfilled because the initial drawing $G$ is a
 straight-line drawing.
 The boundary constraints~\eqref{eq:boundary} are fulfilled by
 construction.

We will show that \lemref{weak-to-strong} can be applied to the
system $A^t\cdot s=b^t$, for every $0\le t\le 1$. We define
an appropriate threshold value $  \epsilon^*$ by
\begin{align*}
  \epsilon^*&
              =
              \min_{e_i\prec e_j}  
              \min_{0\le t\le 1}
              |y_j^t-y_i^t| 
 \\            &
              =
              \min_{e_i\prec e_j}  
              \min\{|y_j^0-y_i^0|,|y_j^1-y_i^1|\}
              > 0
\end{align*}

\begin{lem}
   For every $0\le t\le 1$, a solution $s$ to $A^t\cdot s = b^t$
   that satisfies the ordering constraints also satisfies the
   $\epsilon^*$-strong ordering constraints. 
\end{lem}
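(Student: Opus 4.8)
The plan is to deduce this directly from \lemref{weak-to-strong}, applied not to the fixed system $A\cdot s=b$ but to each member $A^t\cdot s=b^t$ of the parametric family. Fix $t\in[0,1]$. By construction, $A^t$ and $b^t$ are precisely the matrix and right-hand side that the recipe in the proof of \thmref{a-graph} attaches to the (fixed) A-graph $G$ when the prescribed intercepts are $y^t$ and the prescribed outer-face shape is the polygon $\Delta^t$ cut out by the lines $y=y_i^t+h_i^t x$ over the boundary edges $i$: the concurrency constraints \eqref{eq:slope0} carry the coefficients coming from $y^t$, the proportionality constraints \eqref{eq:proportion}--\eqref{eq:proportion2} are unchanged, and the boundary constraints \eqref{eq:boundary} read $s_i=h_i^t$. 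Consequently, provided $\Delta^t\subset[-1,1]\times(-\infty,+\infty)$, \lemref{weak-to-strong} applies verbatim to $A^t\cdot s=b^t$ and yields: any solution satisfying the ordering constraints also satisfies the $\epsilon$-strong ordering constraints for every $\epsilon\le\min\{\,|y_i^t-y_j^t|:e_i\prec e_j\,\}$. (Note \lemref{weak-to-strong} invokes \lemref{order-gives-drawing}, whose proof likewise transfers: once a solution obeying the ordering constraints exists, the resulting straight-line drawing is already F\'ary with a non-degenerate outer face, which the boundary equations then pin to $\Delta^t$.) So the only thing that needs checking is that each $\Delta^t$ lies in the vertical strip $[-1,1]\times(-\infty,+\infty)$.

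To prove the strip containment I would show that every vertex of $\Delta^t$ has $x$-coordinate sandwiched between those of the corresponding vertices of $\Delta^0$ and $\Delta^1$. A vertex of $\Delta^t$ off the $y$-axis is an intersection of two boundary lines, and the intersection of $y=y_i^t+h_i^t x$ with $y=y_j^t+h_j^t x$ has $x$-coordinate $(y_j^t-y_i^t)/(h_i^t-h_j^t)$; a vertex on $Y$ (in the triangle cases) trivially has $x=0$. Since $y^t$ and $h^t$ are affine in $t$, the expression above is a M\"obius function $t\mapsto(\alpha+\beta t)/(\gamma+\delta t)$, with denominator $(1-t)(h_i^0-h_j^0)+t(h_i^1-h_j^1)$, a convex combination of two reals of the same nonzero sign; here one uses that, by \obsref{left-right} and the fact that the side structure of the outer face is the same in $G$ and in $\Delta$ (and, for the vertex shared by $e_1$ and $e_m$, the w.l.o.g.\ placement in $L$ in both), the vertex lies on the same side ($L$ or $R$) of $Y$ in $\Delta^0$ and in $\Delta^1$, and that side fixes the sign of $h_i^\bullet-h_j^\bullet$. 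Hence the denominator never vanishes on $[0,1]$, so this M\"obius function has no pole there and is monotone on $[0,1]$; its value at $t$ therefore lies between its values at $t=0$ and $t=1$, i.e.\ between the $x$-coordinates of the corresponding vertices of $\Delta^0$ (the outer face of $G$) and $\Delta^1=\Delta$, both of which lie in $[-1,1]$ by the normalizations of this section. Since a polygon is contained in a vertical strip as soon as all its vertices are, $\Delta^t\subset[-1,1]\times(-\infty,+\infty)$.

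It remains to assemble the pieces. For $e_i\prec e_j$, the quantity $y_j^t-y_i^t=(1-t)(y_j^0-y_i^0)+t(y_j^1-y_i^1)$ is affine in $t$ and (again by the sign argument for $\prec$) its values at the two endpoints have the same nonzero sign, so $\min_{0\le t\le1}|y_i^t-y_j^t|=\min\{|y_i^0-y_j^0|,|y_i^1-y_j^1|\}>0$; in particular $\epsilon^*=\min_{e_i\prec e_j}\min_{0\le t\le1}|y_i^t-y_j^t|>0$ and, for every fixed $t\in[0,1]$, $\epsilon^*\le\min\{\,|y_i^t-y_j^t|:e_i\prec e_j\,\}$, which is exactly the range of $\epsilon$ for which \lemref{weak-to-strong} applies to $A^t\cdot s=b^t$. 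Taking $\epsilon=\epsilon^*$ gives the assertion. The step requiring the most care is the sign bookkeeping: one must confirm that none of the affine functions $t\mapsto h_i^t-h_j^t$ (for pairs of boundary edges meeting at a vertex of $\Delta^t$) and $t\mapsto y_i^t-y_j^t$ (for $e_i\prec e_j$) changes sign on $[0,1]$, which rests on \obsref{left-right} together with the coincidence of the side structure of the outer face in $G$ and in $\Delta$; everything else is routine linear interpolation.
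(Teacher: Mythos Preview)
Your proposal is correct and follows the paper's approach: show that $\Delta^t\subset[-1,1]\times(-\infty,+\infty)$ and then invoke \lemref{weak-to-strong} with $\epsilon=\epsilon^*$. The only difference is in the verification of the strip containment---the paper rewrites the bound $|x\text{-coordinate}|\le 1$ for a boundary vertex as the linear-in-$t$ inequality $y_i^t-y_j^t\le s_j^t-s_i^t$ (both sides affine, true at $t=0,1$, hence everywhere), whereas you argue via monotonicity of the M\"obius function $t\mapsto(y_j^t-y_i^t)/(h_i^t-h_j^t)$; both routes are valid, the paper's being a shade more direct.
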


\begin{proof}
We denote by $\Delta^t$ the shape of the outer face as specified by
$y^t$ and $h^t$.  
It suffices to prove that this shape
is contained in
$[-1,1]\times(-\infty,+\infty)$,
at which point \lemref{weak-to-strong} applies.

We show that each vertex of $\Delta^t$ is in $[-1,1]\times[-\infty,+\infty]$.
If such a vertex $v$ does not lie on $Y$ and is incident to
the two outer edges
$e_i$ and $e_j$, with $i,j\in \{1,a,b,m\}$ and $e_i \prec e_j$, it
has $x$-coordinate $( y_i^t - y_j^t ) / ( s_j^t - 
s_i^t )$.
Consider the case that $v\in R$. So we want to show that
\begin{equation}
( y_i^t - y_j^t ) / ( s_j^t - s_i^t )  \le 1 \enspace . \eqlabel{pff}
\end{equation}
By the ordering constraints, $s_j^t - s_i^t > 0$, so \eqref{eq:pff} is
equivalent to
\begin{equation*}
  y_i^t - y_j^t  \le  s_j^t - s_i^t.
\end{equation*}
This inequality holds for $t=0$ and for $t=1$. The left side is linear
in $t$.
Since $e_i$ and $e_j$ are boundary edges, the right side is also
linear in $t$.
So the inequality holds for 
every $t\in [0,1]$.
In the case $v\in L$, the proof that $v$'s $x$-coordinate is at 
least $-1$ is similar.
\end{proof}

\subsection{Existence (and uniqueness) of solutions to $A^t\cdot s=b^t$}

We now prove the following lemma which, together with \lemref{order-gives-drawing}, completes the proof of \thmref{a-graph}.

\begin{lem}\lemlabel{uniqueness}
	For every $0\le t\le 1$, the system $A^t\cdot s=b^t$ has a
	unique solution $s^t$, and this solution satisfies the ordering
	constraints.
\end{lem}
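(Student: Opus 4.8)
The plan is to use a continuity/connectedness argument on the parameter interval $[0,1]$. Let $U\subseteq[0,1]$ be the set of parameters $t$ for which $A^t\cdot s=b^t$ has a unique solution $s^t$ that satisfies the ordering constraints. We know $0\in U$: the starting system has the solution $s^0$ given by the slopes of the initial drawing $G$, this solution satisfies the ordering constraints by construction (as noted at the end of \secref{setting}), and \lemref{unique} (applied to $A^0\cdot s=b^0$) then forces uniqueness. We want to show $U=[0,1]$, and since $[0,1]$ is connected it suffices to show $U$ is both open and closed in $[0,1]$. Along the way the key fact we exploit is that, by the preceding lemma, any solution of $A^t\cdot s=b^t$ satisfying the (weak) ordering constraints actually satisfies the $\epsilon^*$-strong ordering constraints, with the \emph{same} $\epsilon^*>0$ for all $t$; this uniform gap is what makes the topological argument go through.

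First I would show that on $U$ the matrix $A^t$ is invertible: this is exactly \lemref{unique} (for $t\in U$ there is a solution satisfying the ordering constraints, so the solution is unique, so $\ker A^t=\{0\}$). Hence on a neighbourhood of any $t_0\in U$ the matrix $A^t$ stays invertible (the determinant is a polynomial, hence continuous, in $t$), and the unique solution $s^t=(A^t)^{-1}b^t$ depends continuously on $t$ there. For \textbf{openness}: given $t_0\in U$, on a small interval around $t_0$ the unique solution $s^t$ varies continuously; it satisfies the $\epsilon^*$-strong ordering constraints at $t_0$, so by continuity it still satisfies the weak ordering constraints for $t$ near $t_0$ (a strict inequality with a uniform margin $\epsilon^*$ cannot be destroyed instantly) — and then, by the uniform-gap lemma above, it in fact satisfies the $\epsilon^*$-strong constraints again, and by \lemref{unique} it is the unique solution. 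So a neighbourhood of $t_0$ lies in $U$.

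For \textbf{closedness}: let $t^*$ be a limit of points $t_n\in U$, $t_n\to t^*$. The subtlety is that we do not yet know $A^{t^*}$ is invertible, so I cannot simply pass to the limit in $s^{t_n}=(A^{t_n})^{-1}b^{t_n}$ — this is the main obstacle. I would handle it as follows. If the $s^{t_n}$ stay bounded, pass to a convergent subsequence $s^{t_n}\to s^*$; then $A^{t^*}s^*=b^{t^*}$, and since each $s^{t_n}$ satisfies the $\epsilon^*$-strong ordering constraints so does the limit (closed inequalities survive limits), hence $s^*$ satisfies the weak ordering constraints, and then \lemref{unique} applied to $A^{t^*}\cdot s=b^{t^*}$ gives uniqueness, so $t^*\in U$. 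So it remains to rule out $\|s^{t_n}\|\to\infty$ (along a subsequence). If that happened, writing $s^{t_n}=\|s^{t_n}\|\,u_n$ with $\|u_n\|=1$ and passing to a convergent subsequence $u_n\to u\ne 0$, dividing $A^{t_n}s^{t_n}=b^{t_n}$ by $\|s^{t_n}\|$ and letting $n\to\infty$ gives $A^{t^*}u=0$ — a nonzero kernel vector. But the four boundary equations force $u_1=u_a=u_b=u_m=0$, and moreover, since $s^{t_n}$ obeys the $\epsilon^*$-strong ordering constraints for every $n$, the rescaled vector $u$ must obey the non-strict constraints $u_j\ge u_i$ whenever $e_i\prec e_j$ (the gap $\epsilon^*/\|s^{t_n}\|\to 0$ but stays $\ge 0$). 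Thus $s^{t^*}+\lambda u$ would be a one-parameter family of solutions of $A^{t^*}\cdot s=b^{t^*}$ for the limit solution $s^{t^*}$ of the bounded case — contradicting uniqueness via exactly the argument of \lemref{unique}. Alternatively, and more cleanly, I would argue directly that $u=0$: the same reasoning as in \lemref{unique} (connectedness of the segment $s+\lambda u$ together with \lemref{weak-to-strong} and the uniform $\epsilon^*$) shows no nonzero $u$ with $u_1=u_a=u_b=u_m=0$ can satisfy those non-strict ordering inequalities while being a kernel vector of an invertible-on-a-dense-set limit, forcing the $s^{t_n}$ bounded. Either way, closedness follows, so $U=[0,1]$, completing the proof.
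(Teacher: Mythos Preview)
Your overall strategy---show that the set $U$ of ``good'' parameters is nonempty, open, and closed in $[0,1]$---is essentially the paper's approach (the paper phrases it as ``let $t^*$ be the first time $\det A^t=0$'' and derives a contradiction, which is the same closedness argument). The openness part and the bounded-subsequence half of the closedness part are fine and match the paper.

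The genuine gap is in ruling out $\|s^{t_n}\|\to\infty$. Your first attempt (``$s^{t^*}+\lambda u$ would be a one-parameter family \dots\ for the limit solution $s^{t^*}$ of the bounded case'') is circular: you are in the case where the sequence is unbounded, so there is no limit solution $s^{t^*}$ to invoke. Your second attempt (``the same reasoning as in \lemref{unique}'') does not go through either: the argument of \lemref{unique} starts from an actual solution $s$ of $A^{t^*}\cdot s=b^{t^*}$ satisfying the strict ordering constraints and slides it along the kernel direction until \lemref{weak-to-strong} is violated. At $t^*$ you have no such solution---that is precisely what is at stake---so there is nothing to slide. All you have is a kernel vector $u$ with $u_1=u_a=u_b=u_m=0$ and $u_j\ge u_i$ whenever $e_i\prec e_j$; concluding $u=0$ from this is a genuinely combinatorial statement about how these monotonicity and concurrency/proportionality constraints propagate through the A-graph, and it does not follow from \lemref{unique} or \lemref{weak-to-strong} alone.

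The paper handles exactly this step with substantial extra work: it defines the set $H$ of edges whose slopes stay bounded, proves four structural propagation properties (PR1)--(PR4) for $H$ (\propref{set-H}), and then proves a separate induction over ``near-A-graphs'' (\lemref{partition-extended}) showing that any edge set with (PR1)--(PR4) must be all of $E(G)$. That inductive dismantling argument---opening up boundary triangles and quadrilaterals one at a time---is the missing idea in your proposal, and it is not a routine verification.
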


\begin{proof}
	Since $A^t$ is an $m\times m$ matrix, the system $A^t\cdot
	s=b^t$ has a unique solution~$s^t$ if and only if $\det A^t \neq 0$.
	When $\det A^t =0$, the system may have no solutions or
	multiple solutions.  
	When $\det A^t\neq 0$, 
	Cramer's Rule states that
	the solution
	is $s^t=(s_1^t,\ldots,s_m^t)$ where, for each
	$i\in\{1,\ldots,m\}$,
	\[ 
	s_i^t = \frac{\det A^t_i}{\det A^t }
	\]
	and $A^t_i$ denotes the matrix $A^t$ with its $i$-th column replaced
	by $b^t$. 
	The numerators $\det A^t_i$ and the common
	denominator $\det A^t $ are polynomials in $t$, and therefore
	continuous
	functions of $t$.
	The solution $s^t=(s_1^t,\ldots,s_m^t)$ depends continuously on $t$
	as long as  $\det A^t\ne0 $.
	
	We have already established that $A^0\cdot s=b^0$ has a
	solution $s^0$ that satisfies the ordering constraints. By
	\lemref{unique}, this solution is unique, so $\det A^0\neq 0$.
	
	Let $t^*$ be the smallest $t>0$
	for which 
	$\det A^{t}= 0$. If such a value does not exist we set $t^*=2$.
	
	First we argue that, for all $0\le t <\min \{1,t^*\}$, the unique solution $s^t$ to $A^t\cdot s=b^t$ satisfies the ordering constraints. This argument is similar to the proof of \lemref{unique}. Suppose, for a contradiction, that there is a value $0<t<\min\{1,t^*\}$ for which $s^t$ does not satisfy the ordering constraints. As $t$ increases its value from $0$ to $\min\{1,t^*\}$, since $s^t$ depends continuously on $t$, a value is reached in which $s^t$ violates the $\epsilon^*$-strong ordering constraints, while it does not violate the ordering constraints. However, this contradicts	\lemref{weak-to-strong}.
	
	If $t^*>1$ the same argument also extends to $t=1$ and we are done.
	Let us therefore assume that $0<t^*\le 1$ and derive a contradiction.
	We look at the one-sided limit $s^*=\lim_{t\uparrow t^*}
	s^t$
as $t$ approaches $t^*$ from below.
	Each function $s_i^t$ is a quotient of two polynomials.
	Thus, for $t\to t^*$ it can either converge to $s_i^{t^*}$, or diverge to $+\infty$ or $-\infty$.
	For $t<t^*$ all solutions $s^t$ to the systems $A^t\cdot s=b^t$ satisfy the $\epsilon^*$-strong ordering constraints.
	Hence, if the limit exists, by continuity, it also satisfies $A^{t^*}\cdot s^*=b^{t^*}$
	and the $\epsilon^*$-strong ordering constraints.
	By \lemref{unique}, the solution $s^*$ is
	the unique solution
	of $A^{t^*}\cdot s=b^{t^*}$, but this contradicts the assumption
	that $\det A^{t^*}= 0$.
	
	It remains to rule out the possibility that
	$A^{t^*}\cdot s=b^{t^*}$ has no solution because
	$\lim_{t\uparrow t^*} s^t$ does not exist.  Define the set $H=\{\,e_i\in
	\{e_1,\ldots,e_m\}:\text{$\lim_{t\uparrow t^*} s_i^t$ exists}\,\}$.
	The set $H$ corresponds to the edges of $G$
	with bounded slope; the remaining edges become vertical as $t\to t^*$.
	\lemref{partition-extended} below shows that $H$ contains all edges of $G$. Hence $\lim_{t\uparrow t^*} s^t$ exists. This completes the proof of the lemma.
\end{proof}


It remains to prove that the set $H$ defined in the proof of \lemref{uniqueness} contains all edges in $E(G)$. We start by stating some properties of $H$.

	\begin{prop}\proplabel{set-H}
		The set   $H$ has the following properties: 
		\begin{compactenum}[(PR1)]
			\item $H$ contains every edge incident to a vertex on the outer face of $G$.
			\item \label{off-C}
			If a vertex $v\not\in Y$ has two incident edges in
			$H$,
			then all $v$'s incident edges belong to $H$.
			\item \label{on-C}
			If a vertex $v\in Y$ has two incident edges $vx,vy\in H$ with $x,y\in L$ or $x,y\in R$, then all $v$'s incident edges belong to $H$.
			\item If $e_i \prec e_j \prec e_k$ and $e_i,e_k\in H$, 
			then $e_j\in H$.
		\end{compactenum}
	\end{prop}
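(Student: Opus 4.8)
The four properties are of increasing depth; (PR1) is essentially by construction, while (PR2)--(PR4) are structural facts about which edges must stay bounded as $t\uparrow t^*$. The plan is to prove them in order, using the slope of an edge at parameter $t$ as the basic quantity and the observation that a vertex's location is a rational function of the slopes of two incident edges and the (continuously varying) intercepts $y^t$.

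For (PR1): the boundary equations \eqref{eq:boundary} fix the slopes $s_1,s_a,s_b,s_m$ to the affine functions $h_i^t$ of $t$, which are bounded on $[0,1]$; hence these four edges lie in $H$. For a triangular outer face the argument from \secref{strong} shows the proportionality constraints at the apex $\gamma\in Y$ then pin \emph{all} slopes incident to $\gamma$ to affine (hence bounded) functions of $t$, so every edge incident to $\gamma$ is in $H$; combined with $e_1,e_a,e_m\in H$ this covers all edges incident to the three outer vertices. For a quadrilateral outer face, $e_1,e_a,e_b,e_m\in H$ already are all the outer edges, and I would argue that the outer vertices have their remaining incident edges forced into $H$ by the same proportionality / concurrency bookkeeping (a vertex with two bounded-slope incident edges stays at a bounded location, which by (PR2)/(PR3) — proved next and applicable since (PR1) does not depend on them — puts its other edges in $H$).

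For (PR2) and (PR3): if $v\notin Y$ (resp.\ $v\in Y$) has two incident edges $e_i,e_j\in H$ whose intersection points on $Y$ are distinct (which holds automatically off $Y$, and is the stated hypothesis $x,y$ on the same side for $v\in Y$), then the location of $v$ in the drawing for parameter $t$ is $\bigl((y_i^t-y_j^t)/(s_j^t-s_i^t),\,\cdot\bigr)$, a ratio of bounded quantities with denominator bounded away from $0$ on $[0,t^*)$ by the $\epsilon^*$-strong ordering constraints (\lemref{weak-to-strong} applied to $A^t\cdot s=b^t$, which is legitimate by the lemma immediately preceding this proposition); hence $v$ converges to a finite point $v^*$ as $t\uparrow t^*$. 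Any third edge $e_k$ incident to $v$ passes through $v^t$ and through $(0,y_k^t)$ with $y_k^t\to y_k^{t^*}\ne (v^*)_y$ (the $y$-coordinate of $v^*$ equals the common $Y$-intercept of $e_i,e_j$ only if $v\in Y$, excluded when $v\notin Y$; and for $v\in Y$ one takes $e_k$ on either side, whose intercept differs from $y_v$), so its slope $(v^t_y - y_k^t)/(v^t_x)$ — or more carefully the slope of the line through $v^t$ and $(0,y_k^t)$ — converges, giving $e_k\in H$. The one point needing care is the edges incident to $v\in Y$ on the \emph{same} side as, say, $x$: here I use the proportionality constraints, which express each such slope as a fixed affine combination of $s_{a_1}$ and $s_{a_k}$ (the extreme slopes on that side), and show these two extremes are in $H$ via the triangles at top and bottom (edges $e,f$ of \figref{ab}) — this is where the argument is most delicate and is the main obstacle.

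For (PR4): suppose $e_i\prec e_j\prec e_k$ with $e_i,e_k\in H$. The $\epsilon^*$-strong ordering constraints give $s_i^t < s_j^t < s_k^t$ with gaps at least $\epsilon^*$, but more to the point $s_j^t$ is trapped: $s_i^t \le s_j^t \le s_k^t$ pointwise on $[0,t^*)$, and since $s_i^t,s_k^t$ converge to finite limits, $s_j^t$ is bounded, hence — being a quotient of polynomials in $t$ that does not diverge — converges. Therefore $e_j\in H$. I expect (PR4) to be short once (PR2)--(PR3) are in hand; the bulk of the work, and the step I anticipate fighting with, is the same-side case of (PR3), i.e.\ propagating boundedness across the fan of edges at a vertex of $Y$ through the proportionality constraints and the two flanking triangles.
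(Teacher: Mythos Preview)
Your treatment of (PR1), (PR2), and (PR4) matches the paper's approach and is correct. The issue is in (PR3).

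For a vertex $v\in Y$, every incident edge passes through $v$ itself, so all incident edges have the \emph{same} $y$-intercept $y_v$. Your attempt to recover a third slope from ``the line through $v^t$ and $(0,y_k^t)$ with $y_k^t\ne y_v$'' therefore cannot work; for $v\in Y$ there is no such $e_k$. More importantly, you have the hard and easy sides swapped. Given two edges on the $a$-side (say $x,y\in L$) in $H$, the proportionality constraints \eqref{eq:proportion} immediately put \emph{all} $a_i$ in $H$, since each $s_{a_i}$ is a fixed affine combination of any two of them; no appeal to the triangles is needed here. The genuine difficulty is the \emph{opposite} side (the $b_j$'s), and your plan does not address it.

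The paper's argument for that step is: once all $a_i$ are bounded, constraint \eqref{eq:proportion2} forces the range $s_{b_\ell}^t-s_{b_1}^t$ to converge, so the $b_j$'s either all converge or all drift together to $+\infty$ or to $-\infty$. The triangles above and below $v$ then rule out drift: the third edge $e$ of the lower triangle satisfies $s_{b_1}<s_e<s_{a_1}$ by the ordering constraints at its two endpoints (one in $L$, one in $R$), blocking $s_{b_1}\to+\infty$; symmetrically the upper triangle's edge $f$ gives $s_{a_k}<s_f<s_{b_\ell}$, blocking $s_{b_\ell}\to-\infty$. This two-sided squeeze via $e$ and $f$ is the missing ingredient in your plan.
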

		
	\begin{proof}
 (PR1) If $v$ is a boundary vertex with $v\not\in Y$, then
the location of $v$ is fixed and the $y$-intercepts and therefore slopes of
$v$'s incident edges are fixed.  If $v$ is a boundary vertex with $v\in Y$ 
then $\Delta$ is a triangle and $v$ has three incident edges with slopes 
fixed by the boundary equations. Two of these edges are boundary edges, 
so two of these edges lie on the same side, say $L$, and the third edge 
lies on the other side, say $R$. By the proportionality constraints \eqref{eq:proportion}
all edges in $L$ are bounded, and thus belong to $H$. By the 
proportionality constraints \eqref{eq:proportion2} the range of slopes used by the edges in 
$R$ is bounded, and as one of them is fixed all of them have bounded 
slopes, and thus belong to $H$.

		(PR2)	If $v$ does not lie on $Y$ and two incident edges have bounded slope, then the location of $v$ is fixed in the limit.
		By the concurrency constraints, the slopes of the remaining incident
edges are also bounded.

(PR3) The case where $v$ lies on the outer face is subsumed by (PR1).
Assume therefore that $v$ lies on $Y$ and is an interior vertex of $G$.  Define the edges $a_1,\ldots,a_k$
		and $b_1,\ldots,b_\ell$ incident to $v$ as in \figref{ab}.  Let $e$ be the third edge of the triangle with edges $a_1$ and $b_1$, and let
		$f$ be the third edge of the triangle with edges $a_k$ and $b_\ell$.
		Assume without loss of generality that two of the edges $a_i$ belong to $H$. Then, by the proportionality constraints, all edges $a_i$ belong to $H$, and moreover the range $s_{b_\ell}^t-s_{b_1}^t$ converges to a bounded limit as $t\to t^*$.  It follows that either all slopes of the edges $b_j$ are bounded, or they all diverge to $+\infty$,
		or they all diverge to $-\infty$. The ordering constraints for the
		endpoints of $e$ imply \begin{math}
		s_{b_1}<s_e<s_{a_1}
		\end{math}.
		This is inconsistent with $\lim_{t\uparrow t^*} s_{b_1}^t=+\infty$.
		The ordering constraints for the endpoints of $f$ imply
		\begin{math}
		s_{a_k}<s_f<s_{b_\ell}
		\end{math}.
		This is inconsistent with $\lim_{t\uparrow t^*} s_{b_\ell}^t=-\infty$. Thus, the only
		possibility is that all slopes of the edges incident to $v$ are bounded.
		
		
(PR4) This follows from the ordering
constraints, since, for all $0\le t< t^*$,
		$s_i^t<s_j^t<s_k^t$ and both $\lim_{t\uparrow
		t^*}s_i^t$ and $\lim_{t\uparrow t^*}s_k^t$ are defined.
	\end{proof}
	
We now present \lemref{partition-extended}, which completes the proof of \lemref{uniqueness} and \thmref{a-graph}. The lemma is proved by induction on something
that starts as an $A$-graph but is then dismantled into something
more general.  A \emph{near-A-graph} is a graph that satisfies all
conditions of an A-graph except that its outer face can be arbitrarily
complex, even disconnected.  More specifically, each edge of a near-A-graph intersects $Y$
in exactly one point; each inner face is a triangle or a quadrilateral,
without any disconnected compoments inside;
each triangular face contains one vertex in each of $Y$, $L$, and $R$;
and for every vertex $v$ on $Y$ each of the faces directly above and
below $v$ is either a triangular face or the outer face.

\begin{lem}\lemlabel{partition-extended}
Let $G$ be a near-A-graph and let $H \subseteq E(G)$ be a set of edges satisfying Properties (PR1)--(PR4) of \propref{set-H}. 
	Then $H=E(G)$.
\end{lem}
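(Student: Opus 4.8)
The plan is to induct on the number of edges of the near-$A$-graph $G$, peeling off structure while maintaining that the edge set $H$ still satisfies (PR1)--(PR4) with respect to the smaller graph. The base case is when $G$ is small enough (say at most a constant number of edges, or when $G$ has only one inner face), where (PR1) already forces $H=E(G)$ because every edge is incident to the outer face. For the inductive step, I would look at the outer face of $G$: by (PR1), every edge incident to a boundary vertex is in $H$. The key move is to find an inner face $F$ that is ``exposed'' at the outer boundary — i.e., shares an edge with the outer face — and argue that all of $F$'s edges lie in $H$, then delete from $G$ the edge(s) of $F$ that separate $F$ from the outer face, merging $F$ into the outer face. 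One has to check that the result is still a near-$A$-graph (this is exactly why the definition allows a wild, even disconnected, outer face) and that $H$, restricted to the new edge set, still satisfies (PR1)--(PR4).

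Concretely, I would case on the type of the exposed inner face $F$. If $F$ is a triangular face $xyz$ with $x\in Y$, $y\in L$, $z\in R$, and it is incident to the outer face along some edge, the boundary vertices among $x,y,z$ already pull their incident edges into $H$ via (PR1); combining this with (PR2) at a non-$Y$ vertex of $F$ that has acquired two incident $H$-edges, or with (PR3) at the $Y$-vertex of $F$, propagates membership to the remaining edge(s) of $F$. Similarly, if $F$ is a quadrilateral face whose every edge crosses $Y$, having one or two of its edges on the outer face feeds (PR2)/(PR4) enough to conclude all four of its edges are in $H$. Either way, $F\subseteq H$, so after removing the boundary-separating edges of $F$ we get a smaller near-$A$-graph $G'$ with $H\cap E(G')$ still covering all edges incident to the (enlarged) outer face — that reestablishes (PR1) for $G'$; (PR2)--(PR4) are inherited because they are purely local conditions on edges that survive, and the faces directly above/below each $Y$-vertex are still triangles or the outer face by construction. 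Applying induction to $G'$ gives $H\cap E(G')=E(G')$, and together with $F\subseteq H$ this yields $H=E(G)$.

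The main obstacle I expect is twofold. First, choosing the right face $F$ to remove so that the remaining graph is genuinely still a near-$A$-graph: removing an edge of a quadrilateral face can create a vertex on $Y$ whose face ``above'' or ``below'' is no longer a triangle or the outer face, unless one is careful to remove exactly the edges bounding $F$ from the outer side and to verify the $Y$-vertex incidence condition survives; handling the case where deleting edges disconnects the boundary or isolates a vertex needs the generality of ``arbitrarily complex, even disconnected'' outer faces, and possibly a secondary induction parameter (e.g., total number of inner faces, or edges not in $H$) rather than just edge count. Second, and more delicate, is ensuring that at every stage there really \emph{is} an inner face sharing an edge with the outer face on which the local propagation rules (PR2)/(PR3)/(PR4) actually fire — i.e., that we never get stuck with an inner face all of whose boundary-incident edges are already in $H$ but which has no vertex with two $H$-edges of the right type; here (PR4), which lets membership ``jump across'' a $\prec$-chain of edges at a common $L$- or $R$-vertex, is the crucial tool, and I would organize the argument around a carefully chosen extremal face (for instance, the inner face containing the topmost or bottommost intersection point with $Y$ that still has an edge outside $H$) to guarantee the rules apply.
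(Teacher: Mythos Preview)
Your overall inductive strategy --- peel inner faces into the outer face from outside, maintaining (PR1)--(PR4) --- matches the paper's. However, your mechanism (delete a boundary edge to merge an adjacent inner face into the outer face) is genuinely different from the paper's (split a boundary vertex to open up an adjacent inner face), and this difference creates a real gap.

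The problematic case is a triangular inner face $xyz$ with $x\in Y$ in the interior and the opposite edge $yz$ on the outer boundary (so $y\in L$, $z\in R$ are boundary vertices). All three edges of $F$ are trivially in $H$ by (PR1). But when you delete $yz$, the vertex $x$ becomes a new boundary vertex, and to re-establish (PR1) for $G'$ you must show that \emph{every} edge at $x$ is in $H$. You only know $xy\in H$ and $xz\in H$, and these lie on opposite sides of $Y$; (PR3) requires two $H$-edges on the \emph{same} side, so it does not fire, and neither do (PR2) or (PR4). Your proposal claims ``(PR3) at the $Y$-vertex of $F$ propagates membership'', but that is exactly what fails here. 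Your flagged ``second obstacle'' is also misdiagnosed: the difficulty is not getting $F\subseteq H$ (that is immediate from (PR1)), but re-establishing (PR1) at newly exposed $Y$-vertices.

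The paper sidesteps this by never exposing a $Y$-vertex with only one $H$-edge per side. In their Case~1, the $Y$-vertex $v$ is \emph{already} on the boundary, and they split $v$ itself; the newly exposed vertices $a,b$ of the triangle $vab$ lie in $L$ and $R$, so (PR2) applies. In their Case~2, they split a reflex boundary vertex $v\notin Y$, opening a quadrilateral $vabc$; if a newly exposed vertex $a$ (or $c$) happens to lie on $Y$, then its two quad-edges $va,ab$ lie in the same half-plane (both on $v$'s side), so (PR3) does fire. The vertex-splitting operation keeps the edge set in bijection and is carefully chosen so that the propagation rules always apply at every newly exposed vertex; your edge-deletion does not come with this guarantee, and your extremal-face heuristic does not address the actual failure mode.
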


\begin{proof}
	The proof is by induction primarily on the number of inner faces of $G$ and secondarily on the number of vertices of $G$. We dismantle $G$
	from outside while maintaining
	Properties~(PR1)--(PR4). In particular:
	\begin{itemize}
		\item If $G$ is not 2-connected but has more than one
                  edge, we will cut it
		into pieces with fewer edges.
		\item If $G$ is 2-connected, we will modify it and reduce it to a
		graph
		with fewer interior faces,
		keeping the number of edges fixed.
	\end{itemize}
Eventually, we will reduce to a graph with a single edge, and here the
claim is trivial because the edge belongs to the boundary.

We will now go into the details of the proof.
	We refer to the edges of $H$ simply as \emph{$H$-edges}.
	
	If $G$ is not connected then we can independently apply induction on each component
	of $G$. 
	
	If $G$ has a cut vertex $v$ whose removal
	splits $G$ into components $A_1,\ldots,A_r$ then, for each
	$i\in\{1,\ldots,r\}$, we can independently apply induction on the subgraph $G_i$ of $G$
	induced by $V(A_i)\cup\{v\}$. Every edge of $G_i$ inherits its
        classification as an $H$-edge from its corresponding edge in
        $G$.
        Then it is easy to see that Properties~(PR1)--(PR4) are
        satisfied by $G_i$.
        Properties~(PR2)--(PR4) are obviously preserved under taking subgraphs. 
        Property~(PR1) follows from the fact that every boundary vertex of $G_i$ is also a boundary vertex of $G$; this is because each inner face of $G$ is a quadrangle or a triangle, hence $G_i$ cannot be nested inside a different subgraph $G_j$ of $G$.

	\begin{figure*}[htb]
		\centering{\includegraphics{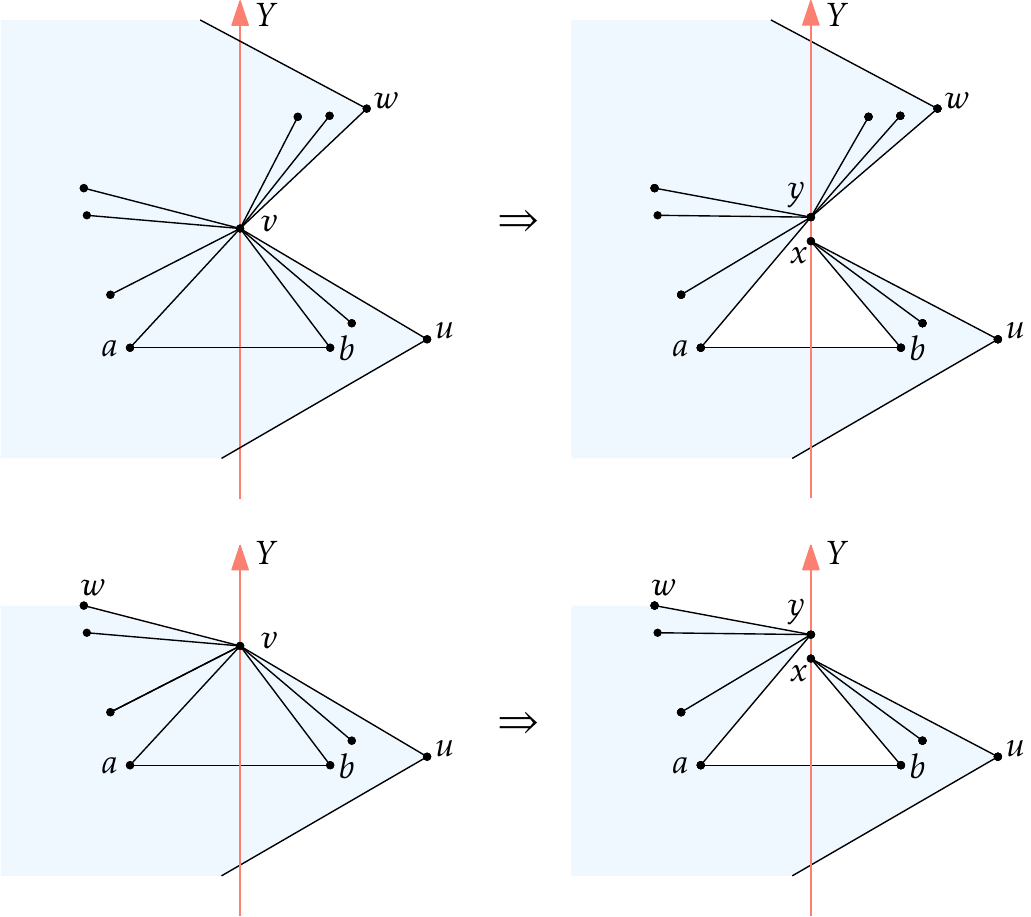}}
		\caption{Proof of \lemref{partition-extended} with a vertex
			$v$ on $Y$. Integrating a triangle into the outer face.}
		\figlabel{lemma-y-3}
	\end{figure*}

	We are left with the case in which $G$ is a 2-connected near-A-graph whose outer face 
	is delimited by a simple cycle $F$. We distinguish two cases.
	
	{\em Case 1}. The cycle $F$ contains a vertex $v$ on $Y$ that is incident to an inner triangular face $vab$. In this case
        we \emph{open up} $vab$, merging it into the outer face. \figref{lemma-y-3} illustrates the procedure for the case that $ab$ lies below $v$, with $a\in L$ and $b\in R$. Let $u$ and $w$ be the predecessor and successor of $v$ on the
	counterclockwise cycle $F$, and assume w.l.o.g.\ that $u\in R$.
	We construct a new graph $G'$ by splitting $v$ into two vertices $x$
	and $y$ that both lie on $Y$, with $y$ above $x$. We make $x$ adjacent to $u$ and to every neighbor of $v$ between $b$ and $u$.
	We make $y$ adjacent to the remaining neighbors of $v$.
	\figref{lemma-y-3} shows that this procedure works both for $w\in R$
	and for $w\in L$. Note that $G'$ has one inner face less than $G$, hence induction applies.

	{\em Case 2}. If Case~1 does not hold,
%
%
        every vertex of $F$ on $Y$ has all its neighbours
	in $L$ or has all its neighbours in $R$.  Since the same is already true for every vertex not on $Y$, a traversal of $F$ has to zigzag/alternate between edges that move to the left and edges that move to the right.  Thus, $F$ must contain some reflex vertex $v$.  The vertex $v$ cannot lie on $Y$, because otherwise we would be in Case~1.  Let $uv$ and $vw$ be the two consecutive edges of $F$ incident on $v$ and let $p$ and $q$ be the intersections of $uv$ and $vw$ with $Y$, with $q$ above $p$.	
	(Note that $u$ and/or $w$ may be contained in $Y$.)
	This implies that $v$ is a reflex vertex of some inner face
	$q=vabc$ of $G$.  Indeed, $vc$ is the first edge incident
	to $v$ intersected by $Y$ and $va$ is the last edge incident
	to $v$ intersected by $Y$. (Note the possibility that $a=w$
	and/or $c=u$.)	We construct a new graph $G'$ by splitting
	$v$ into two vertices $x$ and $y$. We make the vertex $x$
	adjacent to $u$ and every neighbour $z$ of $v$ such that $Y$
	intersects $vz$ before $vu$.  We make $y$ adjacent to all of
	$v$'s neighbors that are not adjacent to $x$.  \Figref{lemma-y-4}
	illustrates this procedure; the lower half illustrates the case
	where $w\in Y$. In $G'$, $q$ is part of the outer face, so $G'$
	has one less inner face than $G$, hence induction applies.

	\begin{figure*}[ht]
          \centering\includegraphics{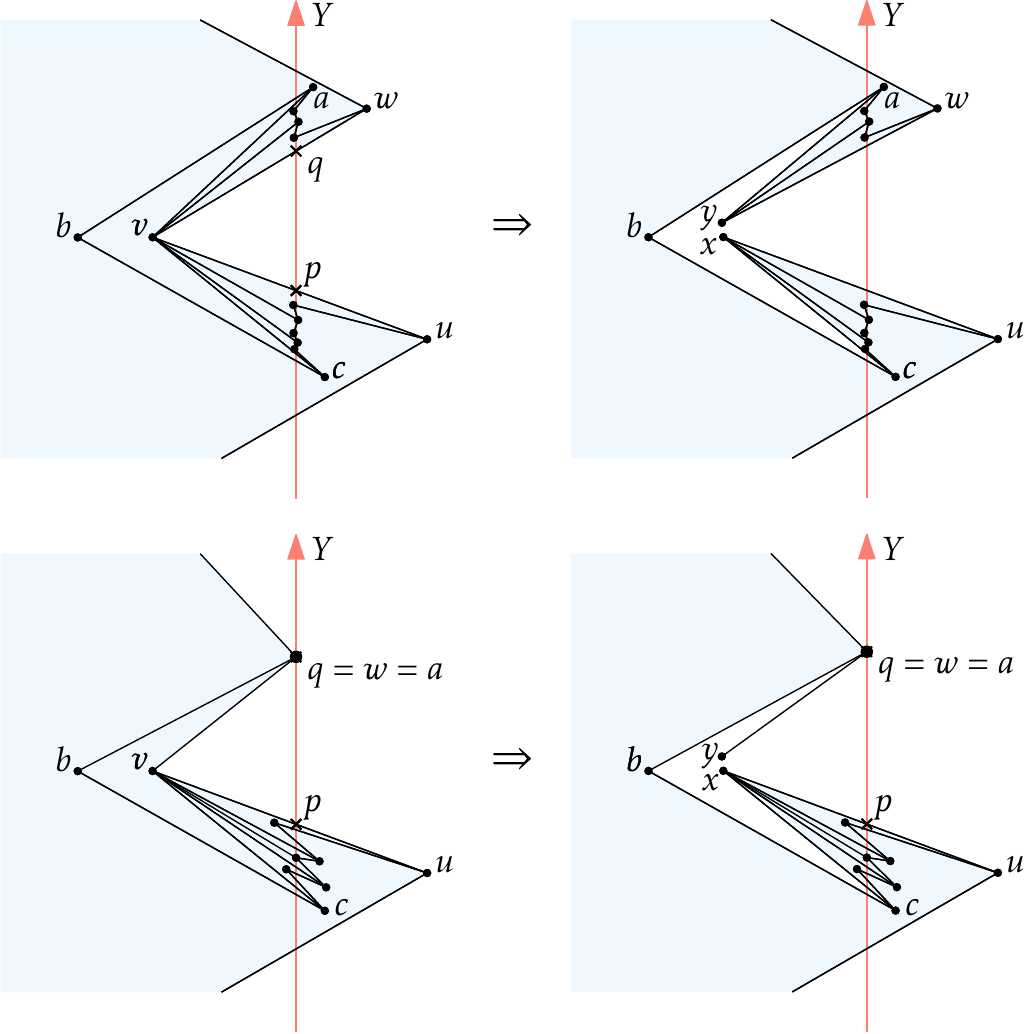}
          \caption{Proof of \lemref{partition-extended} for a reflex
            vertex $v$. Integrating a quadrilateral into the outer face.}
		\figlabel{lemma-y-4}
	\end{figure*}

	This finishes the description of how we modify $G$ into $G'$. Every edge of $G'$ inherits its classification as an $H$-edge from its corresponding edge in $G$. We have to show that $G'$ satisfies Properties~(PR1)--(PR4). Actually Property~(PR1) is the only property that needs to be discussed, as the other properties follow trivially from the fact that $G$ satisfies them. 
%
%
	
	First, note that all edges incident to the new vertices $x$ or $y$
	were incident to $v$ before, and thus they are $H$-edges. Second,
	all edges incident to any boundary vertex of $G'$ that is also
	a boundary vertex of $G$ are $H$-edges, since the edges of $G'$
	inherit their classification as $H$-edges from their corresponding
	edges in $G$. It remains to deal with the boundary vertices of
	$G'$ that are inner vertices of $G$.

	In Case 1 we have two boundary vertices of $G'$ that might be
	inner vertices of $G$, namely $a$ and $b$. These vertices
	do not lie on~$Y$. By Property~(PR1) for $G$, both $va$ and
	$vb$ are $H$-edges, since they are incident to the boundary
	vertex $v$. From the ordering constraints around $a$ and $b$
	we get $va\prec ab\prec vb$ or $vb\prec ab\prec va$, and thus,
	by Property~(PR4) for $G$, we have $ab\in H$.  Now we have two
	$H$-edges $va$ and $ab$ incident to $a$, and by Property~(PR2)
	for $G$ all edges incident to $a$ belong to $H$. It follows that
	all edges incident to $a$ in $G'$ are $H$-edges, and similarly
	for~$b$.

	In Case~2 we have three boundary vertices of $G'$ that might be inner
	vertices of $G$, namely $a$, $b$, and $c$. By Property~(PR1) for
	$G$, both $va$ and $vc$ are $H$-edges, since they are incident
	to the boundary vertex $v$. Consider the quadrilateral $q=vabc$
	of $G$. By the ordering constraints, we get $vc \prec bc\prec
	ba\prec va$ or $va \prec ba\prec bc\prec vc$, depending on
	whether $v\in L$ or $v\in R$. Thus, by Property~(PR4) for $G$,
	the edges $bc$ and $ba$ are also $H$-edges. The vertex $b$ does
	not lie on $Y$. The vertex $a$ might lie on $Y$ or not, but if it
	does, then the two incident edges $va$ and $ab$ lie in the same
	half-plane.  The same holds for $c$. Thus by Properties~(PR2)
	or~(PR3) for $G$ all edges incident to $a$, $b$ and $c$ in $G$
	belong to $H$. It follows that all edges incident to $a$, $b$,
	and $c$ in $G'$ are $H$-edges.

	Since $G'$ satisfies Properties~(PR1)--(PR4) induction applies and all edges of $G'$ (and thus all edges of $G$) are $H$-edges. This completes the proof.
\end{proof}


\section{Triangulations}
\seclabel{triangulations}



So far, we have shown that every collinear set in an A-graph is free,
and we can even specify  for edges that cross the line $Y$ the place
where this crossing occurs.
We will now apply this to prove for arbitrary planar graphs $G$
that every collinear set is free.
We might as well assume that $G$ is a maximal planar graph, i.e., a
triangulation.




\begin{thm}\thmlabel{main}\ 
  \begin{compactitem}
    \item Let $T$ be a triangulation, i.e.,
      a \textup(not necessarily straight-line\textup) plane drawing of an edge-maximal
      planar graph.
    \item Let $C$ be a good proper curve for $T$. \textup(This means that $C(0)=C(1)$
      is in the outer face and the intersection between $C$ and each edge
      $e$ of $T$ is either empty, a single point, or the entire edge $e$.\textup)
     \item Let $r_1,\ldots,r_k$ be the mixed sequence of vertices and open edges of $T$ that are intersected by~$C$, in the order in which they are intersected by $C$---edges of $T$ that lie entirely on $C$ are omitted from this sequence. \textup(They are implicitly represented by their endvertices, which are two consecutive elements $r_i$ and $r_{i+1}$.\textup)
	\item Let $y_1<\cdots<y_k$ be a sequence of numbers.
                  
       \item Let $\epsilon>0$ be a tolerance parameter.
\end{compactitem}
Then
        $T$
        has a \Fary\ drawing such that,
	for each $i\in\{1,\ldots,k\}$: 
	\begin{compactitem}
		\item if $r_i$ is a vertex, then it is drawn at $(0,y_i)$; and
		\item if $r_i$ is an edge, then the intersection of $r_i$ with $Y$ has $y$-coordinate in the interval $[y_i-\epsilon,y_i+\epsilon]$.
\end{compactitem}

Moreover, we can specify the shape $\Delta$ of the outer triangle,
subject to
obvious compatibility constraint that it intersects $Y$ in the specified points.
                
\end{thm}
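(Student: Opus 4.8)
The plan is to prove \thmref{main} by induction, using combinatorial reductions on the triangulation $T$ that either strictly decrease $|V(T)|$ or strictly increase the number of edges of $T$ crossing $Y$ (equivalently, reduce the number of edges that lie on $C$ but do not cross it, or the number of "chords" that complicate the picture). Since these quantities are bounded, the recursion terminates, and in the base case — when no reduction applies — deleting from $T$ the edges that do not cross $Y$ should leave exactly an A-graph $G$ (one vertex of each triangular face in $L$, $Y$, $R$; every remaining face a non-convex quadrilateral), at which point \thmref{a-graph} produces the desired \Fary\ drawing of $G$ with crossings placed at prescribed $y$-coordinates, and then we re-insert the deleted edges. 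The tolerance parameter $\epsilon$ is there precisely to give slack: each reduction will consume a little of it (an edge that was supposed to cross at exactly $y_i$ may instead cross at $y_i$ perturbed), and we pass a smaller $\epsilon$ to the subproblem; \thmref{a-graph} itself only needs the crossing $y$-coordinates to be weakly monotone with the right equality pattern, so the final placement stays within tolerance.

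First I would set up the invariants carefully: $T$ is a triangulation with a good proper curve $C$ whose intersection sequence is $r_1,\dots,r_k$, and a target sequence $y_1<\cdots<y_k$ with tolerance $\epsilon$; by \thmref{dujmovic-frati} it is equivalent, and more convenient, to think of $C$ as the $y$-axis $Y$ in an existing (not necessarily straight-line) plane drawing of $T$. Next I would enumerate the reductions. Typical moves: (i) if some edge $uv$ of $T$ lies entirely on $C$ (so $u=r_i$, $v=r_{i+1}$ are consecutive vertices on the curve), then $uv$ is a chord splitting $T$ into two triangulations glued along $uv$; recurse on each side with the induced curve, aligning the shared vertices' $y$-coordinates, and glue the drawings. (ii) If there is a separating triangle, split along it and recurse. (iii) If $C$ passes through a vertex $v$ but some edge incident to $v$ on one side "should not" be there for the A-graph structure — e.g. a triangular face at $v$ with both non-$v$ vertices on the same side of $Y$, or a vertex of degree too small — contract an appropriate edge incident to $v$ (legal because it is not in a separating triangle after the split reductions), which decreases $|V(T)|$; one must check the curve $C$ still makes sense and its intersection sequence only shrinks. (iv) Flip or remove an edge not crossing $Y$ that bounds two faces on the same side of $Y$ so as to expose structure; this increases the crossing count or decreases size. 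In each case I would verify that the hypotheses (good proper curve, monotone targets, positive tolerance) are restored for the smaller instance, possibly shrinking $\epsilon$ and choosing fresh intermediate target $y$-values strictly between consecutive old ones to accommodate new crossing points.

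For the base case I would argue: if none of the reductions applies, then every edge of $T$ either crosses $Y$ transversally in one interior point or lies in the closed left/right halfplane; let $G$ be the subgraph of edges crossing $Y$ together with the vertices on $Y$. Checking the A-graph axioms of \defref{a-graph}: every remaining edge meets $Y$ in exactly one point; each bounded face is a triangle with one vertex in each of $L,Y,R$ (if it had two on the same side, a reduction of type (iii)/(iv) would apply) or a quadrilateral crossing $Y$ (when no $Y$-vertex is on it); non-convexity of quadrilaterals and the "two triangular faces above/below each $Y$-vertex" condition follow from minimality. Apply \thmref{a-graph} to $G$ with the prescribed crossing $y$-coordinates (the $y_i$'s for vertices, and values within $[y_i-\epsilon,y_i+\epsilon]$ for the edge-crossings, chosen with the correct equality pattern) and with outer face $\Delta$; this gives a \Fary\ drawing of $G$. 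Finally re-add the deleted (non-crossing) edges of $T$: each such edge lies strictly inside one halfplane, joining two vertices of a face of $G$ on that side, so it can be drawn as a straight segment inside the corresponding region without creating crossings — but here one must be slightly careful, since the A-graph faces might be quadrilaterals and a re-added diagonal could, in principle, exit the face; I would handle this by a small perturbation argument or by noting that the reductions were designed so that every deleted edge is a diagonal of a quadrilateral face of $G$ that is non-convex on the correct side, making the straight diagonal safe.

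The main obstacle, I expect, is showing that the reductions are \emph{complete} — that when none applies, $G$ really is an A-graph — and simultaneously that each reduction \emph{preserves all hypotheses}, especially the existence of a valid good proper curve on the reduced graph and the legality of edge contractions (no separating triangle through the contracted edge). A secondary delicate point is bookkeeping the tolerance and the newly inserted target $y$-coordinates so that, after un-winding all the recursion, the crossing points of the original edges land within their $[y_i-\epsilon,y_i+\epsilon]$ windows and the stated outer-face shape $\Delta$ is realized consistently; this requires that $\Delta$'s compatibility with $(y_1,\dots,y_k)$ propagates to compatible outer shapes for the subinstances, which should follow from \thmref{a-graph}'s own outer-face flexibility.
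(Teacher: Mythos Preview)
Your high-level architecture matches the paper's: a double induction (vertices, then non-crossing edges) via combinatorial reductions, with the base case obtained by deleting the edges that miss $C$ and applying \thmref{a-graph} to the resulting A-graph. The separating-triangle split and the final re-insertion of deleted edges into non-convex quadrilateral faces are also essentially what the paper does. But two of your proposed reductions would not work as stated, and the key structural fact for the base case is missing.

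Reduction (i) is wrong. An edge $uv$ of a triangulation that lies on $C$ does \emph{not} split $T$ into two triangulations: in any triangulation every edge bounds exactly two triangular faces, so removing $uv$ just merges two triangles into a quadrilateral. The paper handles an edge $xy\subset C$ by a \emph{flip}: the two incident faces $xyz$ and $yxb$ have $z\in C^+$ and $b\in C^-$, so replacing $xy$ by the crossing edge $zb$ produces a triangulation with one fewer non-crossing edge; after recursing, $xzyb$ has $x,y\in Y$ and $z,b$ on opposite sides, so neither $z$ nor $b$ is reflex and $xy$ can be put back. Your splitting idea cannot be salvaged here.

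Reduction (iii) targets the wrong edges. Contracting an edge incident to a vertex $v\in C$ would alter the intersection sequence $r_1,\dots,r_k$ in an uncontrolled way. The paper contracts only \emph{unmarked} edges (edges disjoint from $C$) that are additionally not on the boundary of any crossing face; this preserves $r_1,\dots,r_k$ exactly, and the $\epsilon/2$ trick lets you uncontract inside a small ball and stay within tolerance. The remaining non-crossing edges---unmarked edges bounding a crossing face---are handled by a separate, carefully specified \emph{flippable} configuration (a non-crossing face $xyz$ surrounded by three crossing faces $xyb$, $zyc$, $xza$ with distinct $a,b,c$), not by a generic flip.

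Finally, the heart of the base case is a combinatorial claim you do not supply: when no separating triangle, no contractible edge, no flippable edge, and no edge on $C$ remain, every unmarked edge $xy\subset C^+$ has \emph{both} opposite face apices in $C\cup C^-$ (and symmetrically for $C^-$). This is what forces every face of the stripped graph to be a triangle with one vertex in each of $L,Y,R$ or a non-convex quadrilateral, i.e.\ an A-graph. Your proposal correctly flags completeness of the reductions as the main obstacle, but the argument needs this claim and its proof (a short case analysis showing that otherwise one of the reductions would apply).
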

The last condition can be formulated more explicitly:
The triangle $\Delta=\alpha\beta\gamma$ is \emph{compatible} with the
given data $r_1,\ldots,r_k$ and $y_1,\ldots,y_k$ if the following conditions hold:
\begin{compactitem}
	\item If $r_1$ is a vertex, then $\beta=(0,y_1)$, otherwise $(0, y_1)$ is in the interior of the edge $r_1=\beta\gamma$; and
	\item If $r_m$ is a vertex, then $\alpha=(0,y_m)$, otherwise $(0,y_m)$ is in the interior
	of the edge $r_m=\alpha\gamma$.
\end{compactitem}

If the tolerance $\epsilon$ is large, 
the statement of the theorem allows the order in which the edges cross
$Y$ to change. This is not intended, and it can be excluded if we choose
$\epsilon<\min\{(y_{i+1}-y_i)/2:i\in\{1,\ldots,k-1\}\}$. In the proof,
we will make this assumption.

\begin{figure*}[tb]
	\centering{\includegraphics{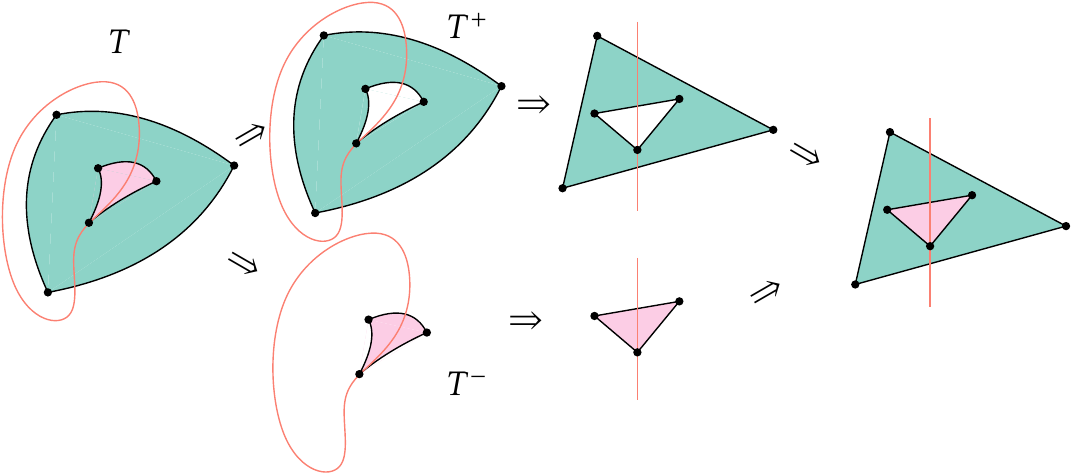}}
	\caption{Recursing on separating triangles in the proof of
		\thmref{main}}
	\figlabel{separating}
\end{figure*}

\begin{proof}
	%
  We start by classifying the edges of $T$.  An edge that has one
  endpoint in $C^-$ and the other endpoint in $C^+$ is a \emph{crossing
    edge}, otherwise it is a \emph{non-crossing edge}.
  An edge
  is \emph{marked} if it intersects $C$, otherwise it is
  \emph{unmarked}.  The unmarked edges are completely disjoint
  from~$C$.  The marked edges include all crossing edges, but also the
  edges with one endpoint on $C$ and the edges that lie completely on
  $C$.
	
	
	%

	The proof is a double induction on the number of vertices of $T$, primarily, and on the number of non-crossing edges of $T$, secondarily.
	We begin by describing reductions that allow us to apply the
	inductive hypothesis. When none of these reductions applies,
	we arrive at our base case. To handle the base case,
	we remove every unmarked edge of $T$, and we will show that we obtain an A-graph, to which we
	apply \thmref{a-graph}.

	

\paragraph{Separating Triangles.}
(See \figref{separating}.)
	If $T$ contains a separating triangle $xyz$, then denote by
        $T^+$ (respectively, $T^-$) the triangulation obtained from
        $T$ by removing the vertices in the interior
        (respectively, exterior) of $xyz$. The triangle that $xyz$ delimits an
        inner face of $T^+$ and the outer face of $T^-$.
Both $|V(T^+)|<|V(T)|$
and $|V(T^-)|<|V(T)|$, so we can apply induction if necessary.
        
The case that the interior of $xyz$ does not intersect $C$ is easy.
We draw $T^+$ by induction.  In this drawing, 
we take the triangle representing the cycle $xyz$, and we draw $T^-$
so that its outer face coincides with this triangle, for example by
Tutte's Convex Drawing Theorem~\cite{tutte:how}.

Consider now the case that $C$ intersects the interior of $xyz$. Then
$C$ intersects the boundary in two points: either it passes through a
vertex of $xyz$ and the opposite open edge, or it intersects two open
edges of $xyz$. 
In both cases, the vertices and edges of $T$ intersected by $C$ that
are not in $T^+$ form a nonempty contiguous subsequence
$r_i,\ldots,r_j$ of $r_1,\ldots,r_k$. Each of $r_{i-1}$ and $r_{j+1}$
is either an edge or a vertex of $xyz$.
	
Apply induction on $T^+$ with the value $\epsilon':=\epsilon/2$ and the sequences $r_1,\ldots,r_{i-1},r_{j+1},\ldots,r_k$ and
	$y_1,\ldots,y_{i-1},y_{j+1},\ldots,y_k$. In the obtained \Fary\ drawing of $T^+$ let $\Delta'$ be the triangle representing  $xyz$ and let $y_{i-1}'$ and $y_{j+1}'$
	be the respective $y$-coordinates of the intersections of
	$r_{i-1}$ and $r_{j+1}$ with $Y$.  By the choice of
	$\epsilon'$ we have $y_{i-1}'<y_i<\cdots<y_j<y_{j+1}'$.  Observe that
	$\Delta'$ is compatible with $r_{i-1},\ldots,r_{j+1}$ and
	$y_{i-1}',y_i,\ldots,y_j,y_{j+1}'$.
	We apply induction on $T^-$ with value~$\epsilon$ using the triangle $\Delta'$ and the sequences $r_{i-1},\ldots,r_{j+1}$ and
	$y_{i-1}',y_i,\ldots,y_{j},y_{j+1}'$.  Combining the \Fary\ drawings of $T^+$
	and $T^-$ yields the desired \Fary\ drawing of $T$.  Thus,
        from now on we assume that $T$ has no separating triangles.
	
	\paragraph{Contractible Edges.}
	(See \figref{contractible}.)
	A face of $T$ is a \emph{crossing
		face} if it is incident to two crossing edges. We declare an
	unmarked edge of $T$ to be \emph{contractible} if it is not contained
	in the boundary of any crossing face.  
	\begin{figure*}[htb]
		\centering{\includegraphics{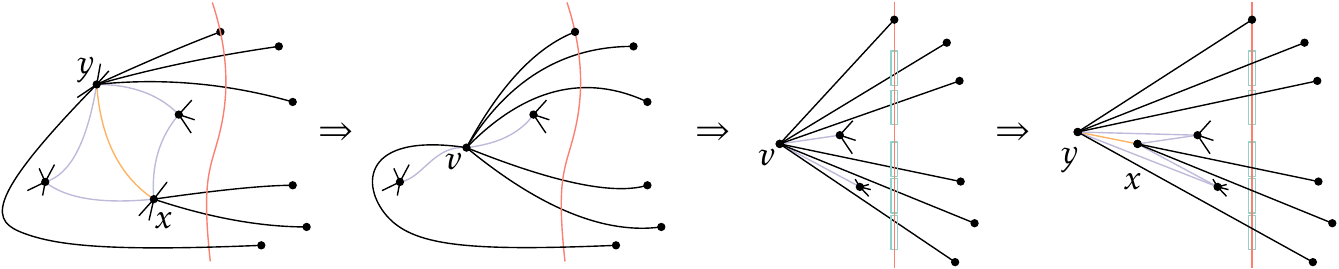}}
		\caption{Contracting and uncontracting edges in the proof of
			\thmref{main}}
		\figlabel{contractible}
	\end{figure*}
	
	If $T$ contains a contractible edge $xy$ then we contract $xy$ to
	obtain a new vertex $v$ in a smaller triangulation $T'$.   We then apply
	induction on $T'$ with the value $\epsilon'=\epsilon/2$ to obtain a \Fary\
	drawing of $T'$ such that each crossing edge $e_i$ crosses
	$Y$ in the interval $[y_i-\epsilon/2,y_i+\epsilon/2]$.
	
	To obtain a \Fary\ drawing of $T$ we uncontract $v$ by placing $x$ and $y$
	within a ball of radius $\epsilon/2$ centered at $v$. (Such
	a placement is always possible, by a standard argument, see, e.g.,~\cite{fary,w-sp-05}.)  Since the
	distance between $y$ and $v$ and the distance between $x$ and $v$ are each at most $\epsilon/2$,
	each crossing edge $r_i$ incident to $x$ or $y$ crosses $Y$ in the interval $[y_i-\epsilon,y_i+\epsilon]$, as required.
	Thus, in the following we assume that $T$ has no separating triangles or contractible
	edges.
	
	
\paragraph{Flippable edges.}
	(See \figref{flippable}.)
	We declare an unmarked edge $xy$ of $T$ to be \emph{flippable} if there
	exist distinct vertices $z$, $a$, $b$, and $c$ such that:
\begin{compactenum}[(11) ]
\item [(1)]
  $xyz$ is a non-crossing face of $T$;
\item [(2)]
  $xyb$, $zyc$, $xza$ are crossing faces of $T$;
  and either 
\item [(3a)] $C$
        intersects $za$, $xa$, $xb$, $yb$, $yc$, and $zc$ in this
        order, or
      \item [(3b)] $C$ intersects $xa$, $xb$, $yb$, $yc$, $zc$,
        and $za$ in this order. (This case can only occur when
        $xza$ is the outer face, otherwise $xza$ would be a separating
        triangle.)
        \end{compactenum}

	\begin{figure*}[htb]
		\centering{\includegraphics{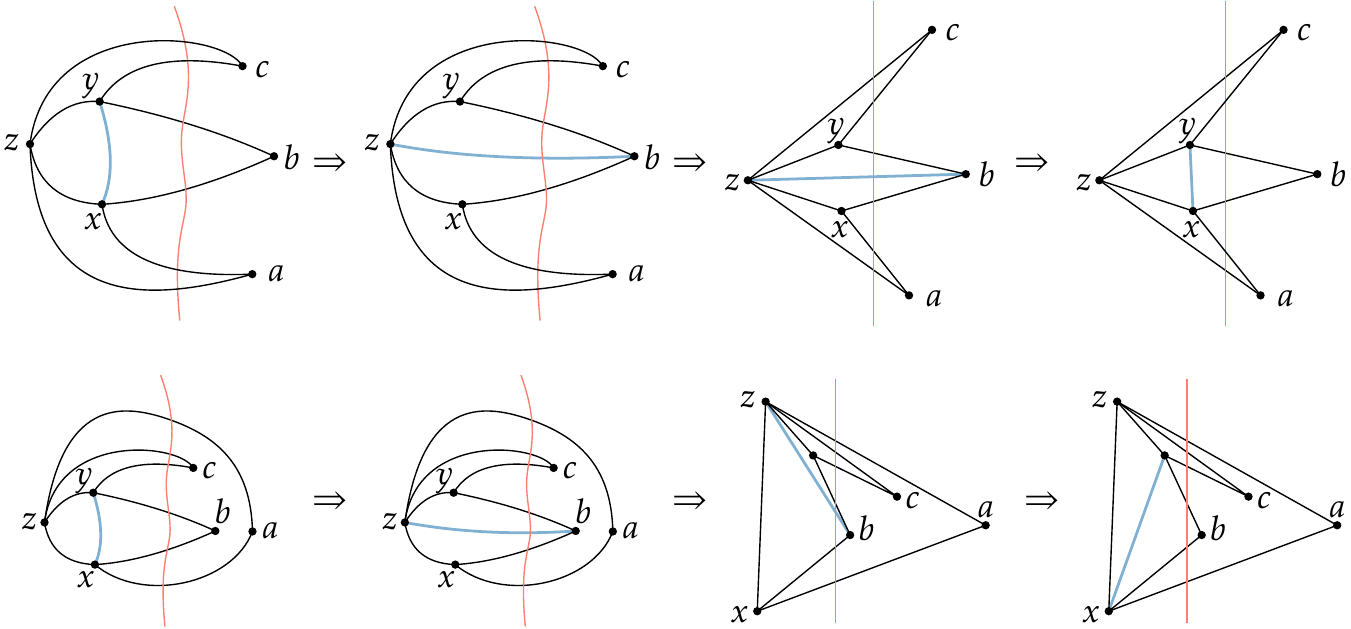}}
		\caption{Flipping edges in the proof of
			\thmref{main}. Top row: Case 3a. Bottom row:
                        Case~3b}
		\figlabel{flippable}
	\end{figure*}
	
	If $T$ contains the flippable edge $xy$ then we remove $xy$ and replace it with $zb$ to obtain a new triangulation $T'$. Note that, since $T$ has no separating triangles, the edge $zb$ is not already present in $T$. Further, $T'$ has the same number of vertices of $T$ and one less non-crossing edge. After choosing a crossing coordinate $y_{zb}$ for $zb$ between those $y_{xb}$ and $y_{yb}$ of $xb$ and $yb$, we can inductively draw $T'$ with tolerance $\epsilon$ and sequences $r_1,\dots,xb,zb,yb,\dots,r_k$ and $y_1,\dots,y_{xb},y_{zb},y_{yb},\dots,y_k$.
	
We claim that in the resulting \Fary\ drawing of $T'$,
we can replace $zb$ by $xy$ without creating a crossing, thus producing
 the desired \Fary\ drawing of $T$.
We show this by establishing that both $b$ and $z$ are convex vertices in $xbyz$.
The vertex $b$ is not a reflex vertex in $xbyz$, since $bx$ and $by$ are crossing
edges.
                In Case~(3a), the existence of the
	edges $za$ and $zc$ ensures that, in the \Fary\ drawing of $T'$,
	$xbyz$ is convex. In Case~(3b), the triangle $zxa$ is convex
        and $xbyz$ is contained in this triangle, therefore $z$ is
        a convex  vertex in $xbyz$.
	
	\paragraph{Edges on $C$.}
	
	If $T$ contains an edge $xy$ that lies on $C$, then we treat it as we treated flippable edges. In this case, $xy$ is incident to two triangles $xyz$ and $yxb$ with $z\in C^+$ and $b\in C^-$. We replace $xy$ with an edge $zb$ to obtain a new triangulation $T'$ with the same number of vertices of $T$ and one less non-crossing edge. We apply induction and get a \Fary\ drawing of $T'$, in which $z$ and $b$ are
	on opposite sides of $Y$ and $x$ and $y$ are on $Y$, hence 	neither $z$ nor $b$ is a reflex vertex of the quadrilateral $xzyb$.
	Thus, removing $zb$ and adding $xy$ gives a \Fary\ drawing of $T$.
	
	\paragraph{The Base Case.}
	
	We are left with the case in which $T$ is a triangulation
	with no separating triangles, no contractible edges, no flippable
	edges, and no edge contained in $C$.  If $T$ is the complete graph
	on three or four vertices, then the proof is trivial,
	so we may assume that $T$ has at least 5 vertices.

        We will simply omit the marked edges. The result will be an
        A-graph, to which we can apply \thmref{a-graph}. In the
        resulting drawing, we will see that we can reinsert the
        omitted edges without producing crossings.

	\begin{claimx} \label{unmarked}
	Any unmarked edge $xy$ in $C^+$ is on the boundary of two
        faces $xyz$ and $yxb$ where $z,b\in C \cup C^-$,
see \figref{two-triangles}a--b.
      \end{claimx}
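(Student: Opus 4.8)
The plan is to argue by contradiction, using the absence of separating triangles, contractible edges, and flippable edges, together with the structure of the curve $C$. Let $xy$ be an unmarked edge lying in $C^+$, and let $xyz$ and $yxb$ be the two faces of $T$ incident to $xy$. I want to show $z,b\in C\cup C^-$; by symmetry it suffices to show $z\in C\cup C^-$. Suppose instead that $z\in C^+$ as well, so that the triangle $xyz$ lies entirely in $C^+$. The first observation is that $xyz$ is then a non-crossing face, since all three of its vertices are in $C^+$; in particular $xz$ and $yz$ are non-crossing edges (they could still be marked, if one of their endpoints lies on $C$, but they are not crossing edges). I then want to derive a contradiction with the base-case hypothesis by exhibiting a reduction that still applies — i.e.\ a contractible or flippable edge, or a separating triangle.

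The key step is to show that $xy$ (or one of $xz$, $yz$) is actually contractible or flippable. Recall $xy$ is contractible unless it lies on the boundary of a crossing face; here its two incident faces are $xyz$ (non-crossing, just shown) and $yxb$. So the only way $xy$ fails to be contractible is that $yxb$ is a crossing face, which forces $b\in C^-$ (since $x,y\in C^+$, a crossing face incident to $xy$ must have its third vertex on the other side, and not on $C$, else the face would not be ``crossing'' in the sense of being incident to two crossing edges — one checks $xb$ and $yb$ are both crossing). Now I examine the edges $xz$ and $yz$ of the all-$C^+$ triangle $xyz$. If either is contractible we contradict the base case, so each must lie on a crossing face; tracing around $z$, its two neighbouring faces across $xz$ and $yz$ must be crossing faces $xza$ and $zyc$ with $a,c\in C^-$. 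At this point the local picture around $x$, $y$, $z$ is exactly the configuration in the definition of a flippable edge: $xyz$ is a non-crossing face, and $xyb$, $zyc$, $xza$ are crossing faces. The remaining task is to verify that $C$ traverses the six edges $za,xa,xb,yb,yc,zc$ in one of the two cyclic orders (3a) or (3b). This follows by tracking how $C$ enters and leaves the region formed by the union of these four crossing faces: $C$ cannot enter the interior of the non-crossing triangle $xyz$ (it meets each of $xz,yz,xy$ in at most one point, and meeting all three is impossible for a Jordan arc confined outside), so $C$ must weave through the four surrounding crossing triangles, and the cyclic adjacency of these triangles around $xyz$ forces precisely the order in (3a), or (3b) when $xza$ happens to be the outer face. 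Either way $xy$ is flippable, contradicting the base case.

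The main obstacle I anticipate is the bookkeeping in the last step: one must be careful about degenerate possibilities where $a$, $b$, $c$ coincide with each other or with $w$-type neighbours, and about the case where one of the edges $xz$, $yz$, $xy$ has an endpoint on $C$ (so it is ``marked'' but not ``crossing''), which slightly changes which faces count as crossing faces. Handling the outer-face case — where $xza$ (or a symmetric triangle) is the outer triangle — is exactly what case (3b) of the flippable definition is designed for, so it should cause no real trouble, but it must be mentioned explicitly. A secondary subtlety is justifying that a crossing face incident to $xy$ really has its apex strictly in $C^-$ rather than on $C$: this uses that a crossing face is defined to be incident to \emph{two} crossing edges, and an edge with an endpoint on $C$ is not a crossing edge. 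Once these case distinctions are laid out, the contradiction with the base-case hypothesis (no contractible edge, no flippable edge) is immediate, completing the proof of the claim.
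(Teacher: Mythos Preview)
Your approach is essentially the same as the paper's: assume $z\in C^+$, deduce from non-contractibility of $xy$, $xz$, $yz$ that $b,a,c\in C^-$, and exhibit a flippable configuration. Two small corrections are in order. First, conditions (3a) and (3b) are \emph{linear} orders along $C$ (which has a distinguished starting point $C(0)$ in the outer face), not cyclic orders; the cyclic order of the six crossing points on $C$ is indeed forced by your weaving argument, but depending on where $C(0)$ falls among them, the edge that satisfies (3a)/(3b) may be $yz$ or $zx$ rather than $xy$. The paper accordingly concludes only that ``at least one of $xy$, $yz$, or $zx$ is a flippable edge,'' which is of course enough for the contradiction. Second, the distinctness of $a,b,c$ (which you flag as an anticipated obstacle) is exactly where the no-separating-triangles hypothesis enters: if $a=b=c$ then $T=K_4$, and otherwise if, say, $a=c$ then the triangle $xya$ separates $z$ from $b$.
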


      \begin{figure}[htb]
        \centering
        \includegraphics
        {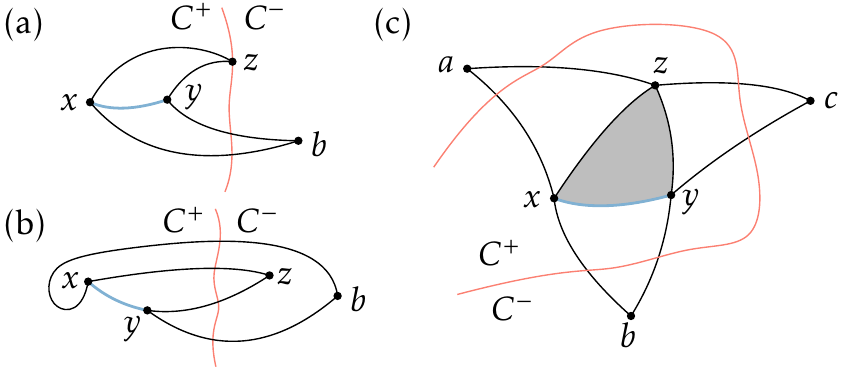}
        \caption{(a--b) shows how the two triangles incident to an unmarked edge
          $xy$ could look.
          In case (b), $yxb$ is the outer face.
          (c) The triangles adjacent to $xyz$ in the proof of Claim~\ref{unmarked}.
        }
        \label{fig:two-triangles}
      \end{figure}

	\begin{proof}
	Since $xy$ is not contractible, at least one of $xyz$ and $yxb$ is a
	crossing triangle, so at least one of $z$ and $b$, say $b$, is in $C^-$.
	Suppose then, for the sake of contradiction, that $z\in C^+$. Since neither $zx$ nor $yz$ is contractible,
	they must be incident to crossing faces $xza$ and $zyc$, respectively,
see \figref{two-triangles}c.
	If $a=b=c$, then $T$ is the complete graph on four vertices,
	which we have already ruled out.  Therefore, assume without loss of
	generality that $b\neq c$.  
	We have $a\neq c$, because otherwise $xya$ would
be a separating triangle that
        separates $z$ from $b$. Similarly, $a\neq b$, otherwise $byz$ would separate $x$ from $a$.	
	This leaves us in the situation in which we have distinct vertices $x$,
	$y$, $z$, $a$, $b$, and $c$ such that $xyz\in C^+$, such that $xyb$, $zyc$, and $xza$
	are crossing faces of $T$, and such that $xyz$ is a non-crossing face of $T$.
	Then at least one of $xy$, $yz$, or $zx$ is a flippable edge. This contradiction proves the claim.
\end{proof}	

	Symmetrically, every unmarked edge $xy$ in $C^-$ is incident to two faces
	$xyz$ and $yxb$ with $z,b\in C \cup C^+$.  This implies that no face of $T$ contains more than one unmarked edge.
	
	Thus, every unmarked edge of $T$ is incident to two faces that intersect $C$. The union of these two faces is a quadrilateral whose boundary consists of four edges that intersect $C$.
	Let $\tilde{G}$ denote the plane drawing obtained by removing all unmarked edges
	from $T$.  By \thmref{dujmovic-frati}, we know that $\tilde G$ has
	a \Fary\ drawing $G$ whose edges and vertices intersect $Y$ in the
	same order as they intersect $C$ in $\tilde{G}$. We have the following.
	
	\begin{claimx} \label{claim-a-graph}
		$G$ is an A-graph.
	\end{claimx}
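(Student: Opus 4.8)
The plan is to verify that $G$ satisfies all five defining properties of an A-graph (Definition~\ref{a-graph}), using the structural results accumulated just before the claim. Recall that $G$ is a \Fary\ drawing of $\tilde G = T$ minus all unmarked edges, and the order in which edges and vertices of $G$ meet $Y$ matches the order in which the marked edges and vertices of $T$ meet $C$.

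First I would establish Property~1: every edge of $G$ meets $Y$ in exactly one point. The edges of $G$ are precisely the marked edges of $T$. A marked edge either is a crossing edge (one endpoint in $C^-$, one in $C^+$), or has exactly one endpoint on $C$; the edges lying entirely on $C$ have been ruled out in the ``Edges on $C$'' reduction. In either case the edge meets $C$ in exactly one point in $\tilde G$, so by \thmref{dujmovic-frati} (which preserves the mixed sequence of vertices and edges met, hence also the property of meeting $Y$ in a single point rather than lying on it), the corresponding edge of $G$ meets $Y$ in exactly one point; if that point is a shared endpoint on $Y$ it is an endpoint of the edge, which is allowed. Next, Properties~4 and~5 concern vertices on $Y$ and triangular faces: a vertex $v$ of $G$ lies on $Y$ exactly when the corresponding vertex of $T$ lies on $C$, and then I would argue that the two faces of $T$ immediately above and below $v$ along $C$ cannot contain an unmarked edge (by Claim~\ref{unmarked} no face of $T$ has more than one unmarked edge, and a face meeting $C$ in the way these two do is a crossing face with all three edges marked), so they survive in $G$ as triangles with one vertex on $Y$ and — since one neighbour lies in $C^-$ and the other in $C^+$, i.e. in $L$ and $R$ after straightening — one vertex in each of $L$, $Y$, $R$. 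This gives Property~4 for those triangular faces and Property~5 (exactly two triangular faces, one above and one below, including for boundary vertices on $Y$, forcing the outer face to be a triangle in that case).

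The crux is Property~2: every face of $G$, including the outer face, is a triangle or quadrilateral with nothing disconnected inside. Here is where Claim~\ref{unmarked} and its symmetric counterpart do the real work. Each face of $G$ arises from a face of $T$ by possibly deleting unmarked edges on its boundary; since no face of $T$ contains more than one unmarked edge, deleting unmarked edges merges faces of $T$ in pairs across a single unmarked edge. An unmarked edge $xy$ (say in $C^+$) is incident, by Claim~\ref{unmarked}, to two faces $xyz$ and $yxb$ with $z,b\in C\cup C^-$; their union is a quadrilateral $xzyb$ whose four boundary edges $xz,zy,yb,bx$ each meet $C$ (they are crossing edges or have an endpoint on $C$), hence are marked and survive in $G$. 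So every face of $\tilde G$ is either a triangle of $T$ with all edges marked, or such a merged quadrilateral; in $G$ these become triangles and quadrilaterals respectively, and since $T$ is a triangulation no disconnected component can sit inside. For Property~3 (non-convex quadrilateral faces), I would invoke the remark after Definition~\ref{a-graph} that Property~3 is redundant, following from Properties~1 and~5; alternatively, directly: in the quadrilateral $xzyb$ with $x,y$ the endpoints of the deleted unmarked edge, each of $x$ and $y$ has its two incident quadrilateral-edges going to the same side (both into $L\cup Y$ or both into $R\cup Y$, since $z,b$ are on the same side of $C$ relative to... ) — more carefully, $x$ and $y$ lie on the same side as each other (both in $C^+$), $z,b$ on the other side, so one of $x,y$ is a reflex vertex of the quadrilateral in any straight-line realization where it is drawn planarly, giving non-convexity.

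The main obstacle I anticipate is Property~2 together with pinning down the outer face: one must be careful that the outer face of $T$ — a triangle $\alpha\beta\gamma$ with $C$ passing through it — loses exactly the right edges (the unmarked ones among $\alpha\beta,\beta\gamma,\gamma\alpha$, of which there is at most one by the ``no edge on $C$'' and Claim~\ref{unmarked} analysis, and possibly zero), so that the outer face of $G$ is a triangle or quadrilateral of the three shapes catalogued in \figref{outerface-cases}, consistent with how $C$ crosses it. A secondary subtlety is checking Property~5 at boundary vertices lying on $Y$: one must confirm that the face of $T$ directly above (or below) such a vertex along $C$, when that direction points into the outer face, is handled correctly — it is the outer face itself, and the argument must show the outer face is then a triangle meeting $Y$ only at that vertex on the relevant side. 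Everything else (connectivity, degree bounds — Properties~6–8) then follows formally from Properties~1–5 exactly as noted in the text, so I would not re-derive them.
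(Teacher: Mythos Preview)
Your proposal is correct and follows essentially the same approach as the paper: verify each of the five defining properties of an A-graph directly, using Claim~\ref{unmarked} (and its symmetric counterpart) to control the faces of $G$ and the ``no edge on $C$'' reduction to handle Property~1 and the triangular faces around vertices on $Y$. The paper's proof is terser and does not dwell on the outer-face subtleties you flag (those are handled separately after the claim, when choosing $\Delta'$), but the logical skeleton is the same.
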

	
	\begin{proof}
In order to	prove the claim, we check each of the properties of an A-graph
	(\defref{a-graph}).
	\begin{compactenum}
		\item The removal of unmarked edges and the fact that $T$ has no
		edge entirely on $C$ ensure that every edge of $G$ intersects $Y$
		in exactly one point.
		\item Because no face of $T$ is incident to more than one unmarked edge,
		each face of $G$ is a quadrilateral or a triangle.  
		\item A quadrilateral face $q=abcd$ appears in $G$ when we remove the unmarked edge $ac$ from $T$. This, and the fact that every edge of $q$ intersects $Y$, ensures that $a$ or $c$ is a reflex vertex of $q$.
		\item The only triangular faces of $G$ are those consisting
		of three marked edges, which necessarily have one vertex in
		each of $Y$, $L$, and $R$.
		\item Since $T$ has no edge on $C$, every vertex of $T$ on $C$
		is incident to two triangular faces (one above and one below)
		each having three marked edges. These faces are still present in $G$.
	\end{compactenum}
This concludes the proof of the claim.
\end{proof}	

	%
	%

We would now like to apply \thmref{a-graph} to obtain a \Fary\ drawing
of $G$ in which, for each $i\in\{1,\ldots,k\}$, the intersection of
$r_i$ with $Y$ is at $(0,y_i)$ and the appropriate vertices on the
outer face of $T$ map to the vertices of the triangle $\Delta$.
Before doing so, we must first prescribe an outer face $\Delta'$ for
the \Fary\ drawing of $G$.  If the outer face of $G$ is a 3-cycle,
then we use $\Delta'=\Delta$.  Otherwise, suppose the outer face of
$G$ is a 4-cycle $\alpha x\beta\gamma$ and $\alpha\beta$ is an
unmarked edge of $T$.  In this case, the locations of $\alpha$,
$\beta$, and $\gamma$ are given by the three vertices of $\Delta$
(with $\alpha$ and $\beta$ both on the same side of $Y$).
If $x$ lies on $Y$,
then $x=r_i$ for some~$i$, 
and the
position of $x$ is determined by $y_i$.
Otherwise, it is determined
by the positions of $\alpha$ and $\beta$ and the values
$y_{i}$ and $y_j$, where $r_i=\alpha x$ and $r_j=\beta x$.
\ifSODA\looseness-1\fi

	In this way, we can apply \thmref{a-graph} to
	obtain a \Fary\ drawing of $G$ in which the intersection of $r_i$
	with $Y$ is at $(0,y_i)$.  Each internal edge $ac$ of $T$ not in $G$
	corresponds to a quadrangular face $q=abcd$ of $G$ in which $a$ or $c$ is a
	reflex vertex.  Therefore, the edge $ac$ can be added to the drawing
	without introducing crossings.  A single external edge $\alpha\beta$
	on the outer face of $T$ might not appear in $G$. In this case the outer
	face of $G$ is a quadrilateral $q'=\alpha x \beta \gamma$ in which $x$ is
	a reflex vertex, so the segment $\alpha\beta$ lies outside of $q'$, and the edge $\alpha\beta$ can therefore be added to the drawing of $G$
	without introducing crossings. Therefore reinserting each edge of $T$ not in $G$ gives the desired \Fary\ drawing of $T$ (and the choice of $\Delta'$ ensures that the outer face of this drawing is $\Delta$). 
        This concludes the proof of \thmref{main}.
\end{proof}

We are finally ready to prove \thmref{our-bang}. Given a plane drawing of
a graph $G$, a collinear set $S$ in $G$, and any $y_1'<\cdots<y_{|S|}'$,
we need to prove that $G$ has a \Fary\ drawing in which the vertices in
$S$ are drawn at $(0,y_1'),\ldots,(0,y_{|S|}')$.  Let $C$ be the proper
good curve that contains $S$ and let $v_1,\ldots,v_{|S|}$ denote the
vertices of $S$ in the order they are encountered when traversing $C$
clockwise starting at the outer face.  

If $G$ is not a triangulation then we add edges to triangulate it in
such a way that each edge we add has a proper intersection with $C$.
To do this, we first add each edge $v_iv_{i+1}$ where $v_i$ and $v_{i+1}$
are in a common face of $G$ to obtain an augmented graph $G'$. If each
edge $v_iv_{i+1}$ added this way is drawn so that it coincides with
the subcurve of $C$ joining $v_i$ and $v_{i+1}$, then $C$ will be a
proper good curve for $G'$.  The interior of each face of $G'$ is either
entirely contained in the interior of $C$ or entirely contained in the
exterior of $C$. At this point we can greedily add edges to $G'$ until
it becomes a triangulation.  By \thmref{collinear-set}, the property
that $S$ is collinear set is  preserved.

 \thmref{dujmovic-frati} implies
	that there exists a Jordan curve $C$ that is admissible for $G$
	and that contains the vertices of $S$ in some order, say
	$v_1,\ldots,v_{|S|}$.  The curve $C$ intersects a subset of the edges
and vertices of $G$ in some order $r_1,\ldots,r_k$.
  We extend the sequence $y_i'$ by inserting additional elements,
  resulting in a
  sequence $y_1<\cdots<y_k$ so that,
  whenever $r_i=v_j$ for some $i\in\{1,\ldots,k\}$ and $j\in\{1,\ldots,|S|\}$,
  then
  $y_i = y_j'$.
  We select
	any triangle $\Delta$ that is compatible with $r_1,\ldots,r_k$ and
	$y_1,\ldots,y_k$ and choose $\epsilon = (1/3)\min\{\,y_{i+1}-y_{i}:
	1\le i \le k-1\,\}$.  \thmref{main} then gives us a \Fary\
	drawing of $G$ in which the vertices in $S$ are at
        $(0,y_1'),\ldots,(0,y_{|S|}')$, as required by
        \thmref{our-bang}.
Finally, edges that were inserted to create a triangulation are simply removed, and we obtain the desired \Fary\ drawing of the initial graph.
\qed


\section{Open Problems}

In this paper we proved that every collinear set is a free set. Several problems concerning collinear and free sets remain open. Here we mention our favorite two.

Let $f(n)$ be the minimum, over all $n$-vertex planar graphs $G$, of the size of the largest collinear set in $G$. What is the growth rate of $f(n)$? The best known bounds are $f(n)\in\Omega(\sqrt{n})$ and $f(n)\in \mathcal{O}(n^\sigma)$, for $\sigma < 0.986$ \cite{bose.dujmovic.ea:polynomial,ravsky.verbitsky:on}. Our results prove that $f(n)$ is also the minimum size of the largest free set over all $n$-vertex planar graphs; this makes determining the growth rate of $f(n)$ even more relevant. For example, any improvement in the lower bound would immediately give an improved result for untangling planar graphs.
%

We find it interesting to understand whether our main theorem,
\thmref{our-bang}, can be generalized so that the $y$-coordinates are arbitrarily prescribed not only for the vertices on~$Y$, but also for the crossing points of the edges with $Y$. Note that \thmref{main} {\em almost} gives this generalization, as every edge crossing $Y$ is at most $\epsilon$ away from its prescribed crossing point, for any arbitrarily small $\epsilon$. 

\section*{Acknowledgement}

Part of this research was conducted during the 5\textsuperscript{th} and the 6\textsuperscript{th} Workshops on Geometry and Graphs, held at the Bellairs Research Institute, March 5--10, 2017 and March 11--16, 2018.  We are grateful to the organizers and participants for providing a stimulating research environment.
%

\bibliographystyle{plain}
\bibliography{freecoll}

\begin{thebibliography}{10}

\bibitem{abehlmmo-ups-12}
Patrizio Angelini, Carla Binucci, William~S. Evans, Ferran Hurtado, Giuseppe
  Liotta, Tamara Mchedlidze, Henk Meijer, and Yoshio Okamoto.
\newblock Universal point subsets for planar graphs.
\newblock In Kun{-}Mao Chao, Tsan{-}sheng Hsu, and Der{-}Tsai Lee, editors,
  {\em 23rd International Symposium on Algorithms and Computation ({ISAAC}
  2012)}, volume 7676 of {\em LNCS}, pages 423--432. Springer, 2012.

\bibitem{DBLP:journals/jgaa/BannisterCDE14}
Michael~J. Bannister, Zhanpeng Cheng, William~E. Devanny, and David Eppstein.
\newblock Superpatterns and universal point sets.
\newblock {\em J. Graph Algorithms Appl.}, 18(2):177--209, 2014.

\bibitem{behks-cppsge-17}
Luis Barba, William Evans, Michael Hoffmann, Vincent Kusters, Maria Saumell,
  and Bettina Speckmann.
\newblock Column planarity and partially-simultaneous geometric embedding.
\newblock {\em J. Graph Algorithms Appl.}, 21(6):983--1002, 2017.

\bibitem{DBLP:journals/comgeo/Bose02}
Prosenjit Bose.
\newblock On embedding an outer-planar graph in a point set.
\newblock {\em Comput. Geom.}, 23(3):303--312, 2002.

\bibitem{bose.dujmovic.ea:polynomial}
Prosenjit Bose, Vida Dujmovi\'c, Ferran Hurtado, Stefan Langerman, Pat Morin,
  and David~R. Wood.
\newblock A polynomial bound for untangling geometric planar graphs.
\newblock {\em Discrete {\&} Computational Geometry}, 42(4):570--585, 2009.

\bibitem{cano.toth.ea:upper}
Javier Cano, Csaba~D. T{\'{o}}th, and Jorge Urrutia.
\newblock Upper bound constructions for untangling planar geometric graphs.
\newblock {\em {SIAM} J. Discrete Math.}, 28(4):1935--1943, 2014.

\bibitem{DBLP:conf/cccg/CastanedaU96}
Netzahualcoyotl Casta{\~{n}}eda and Jorge Urrutia.
\newblock Straight line embeddings of planar graphs on point sets.
\newblock In {\em Proc. the 8th Canadian Conference on Computational Geometry,
  {(CCCG)}}, pages 312--318, 1996.

\bibitem{c-upg-10}
Josef Cibulka.
\newblock Untangling polygons and graphs.
\newblock {\em Discrete {\&} Computational Geometry}, 43(2):402--411, 2010.

\bibitem{dalozzo.dujmovic.ea:drawing}
Giordano {Da Lozzo}, Vida Dujmovi\'c, Fabrizio Frati, Tamara Mchedlidze, and
  Vincenzo Roselli.
\newblock Drawing planar graphs with many collinear vertices.
\newblock {\em Journal of Computational Geometry}, 9(1):94--130, 2018.

\bibitem{dFPP90}
H.~de~Fraysseix, J.~Pach, and R.~Pollack.
\newblock How to draw a planar graph on a grid.
\newblock {\em Combinatorica}, 10(1):41--51, 1990.

\bibitem{devillers.liotta.ea:checking}
Olivier Devillers, Giuseppe Liotta, Franco~P. Preparata, and Roberto Tamassia.
\newblock Checking the convexity of polytopes and the planarity of
  subdivisions.
\newblock {\em Comput. Geom.}, 11(3-4):187--208, 1998.

\bibitem{ddlmw-pqp-15}
Emilio {Di Giacomo}, Walter Didimo, Giuseppe Liotta, Henk Meijer, and
  Stephen~K. Wismath.
\newblock Planar and quasi-planar simultaneous geometric embedding.
\newblock {\em The Computer Journal}, 58(11):3126--3140, 2015.

\bibitem{dujmovic:utility}
Vida Dujmovi\'c.
\newblock The utility of untangling.
\newblock {\em J. Graph Algorithms Appl.}, 21(1):121--134, 2017.

\bibitem{fary}
Istv\'an F\'ary.
\newblock On straight line representions of planar graphs.
\newblock {\em Acta Sci. Math. (Szeged)}, 11:229--233, 1948.

\bibitem{gkossw-upg-09}
Xavier Goaoc, Jan Kratochv{\'{\i}}l, Yoshio Okamoto, Chan{-}Su Shin, Andreas
  Spillner, and Alexander Wolff.
\newblock Untangling a planar graph.
\newblock {\em Discrete {\&} Computational Geometry}, 42(4):542--569, 2009.

\bibitem{GMPP}
P.~Gritzmann, B.~Mohar, J.~Pach, and R.~Pollack.
\newblock Embedding a planar triangulation with vertices at specified points
  (solution to problem e3341).
\newblock {\em Amer. Math. Monthly}, 98:165--166, 1991.

\bibitem{kpr-upg-11}
Mihyun Kang, Oleg Pikhurko, Alexander Ravsky, Mathias Schacht, and Oleg
  Verbitsky.
\newblock Untangling planar graphs from a specified vertex position - {H}ard
  cases.
\newblock {\em Discrete Applied Mathematics}, 159(8):789--799, 2011.

\bibitem{DBLP:journals/ipl/Kurowski04}
Maciej Kurowski.
\newblock A 1.235 lower bound on the number of points needed to draw all
  \emph{n}-vertex planar graphs.
\newblock {\em Information Processing Letters}, 92(2):95--98, 2004.

\bibitem{mchedlidze.radermacher.ea:aligned}
Tamara Mchedlidze, Marcel Radermacher, and Ignaz Rutter.
\newblock Aligned drawings of planar graphs.
\newblock In Fabrizio Frati and Kwan{-}Liu Ma, editors, {\em 25th International
  Symposium on Graph Drawing and Network Visualization ({GD} 2017)}, volume
  10692 of {\em LNCS}, pages 3--16. Springer, 2017.

\bibitem{DBLP:journals/dm/Owens81}
P.~J. Owens.
\newblock Non-{H}amiltonian simple 3-polytopes whose faces are all 5-gons or
  7-gons.
\newblock {\em Discrete Mathematics}, 36(2):227--230, 1981.

\bibitem{pt-up-02}
J{\'{a}}nos Pach and G{\'{a}}bor Tardos.
\newblock Untangling a polygon.
\newblock {\em Discrete {\&} Computational Geometry}, 28(4):585--592, 2002.

\bibitem{ravsky.verbitsky:on}
Alexander Ravsky and Oleg Verbitsky.
\newblock On collinear sets in straight-line drawings.
\newblock In Petr Kolman and Jan Kratochv{\'{\i}}l, editors, {\em 37th
  International Workshop on Graph-Theoretic Concepts in Computer Science ({WG}
  2011)}, volume 6986 of {\em LNCS}, pages 295--306. Springer, 2011.
\newblock Preliminary version on arXiv:\href{http://arxiv.org/abs/0806.0253}
  {0806.0253}.

\bibitem{tutte:how}
William~T. Tutte.
\newblock How to draw a graph.
\newblock {\em Proceedings of the London Mathematical Society}, 13:743--768,
  1963.

\bibitem{w-sp-05}
David~R. Wood.
\newblock A simple proof of the {F}\'ary-{W}agner theorem.
\newblock {\em CoRR}, abs/cs/0505047, 2005.

\end{thebibliography}

\end{document}